\newtheorem{theorem}{Theorem}[section]
\newtheorem*{theorem*}{Theorem}
\newtheorem{definition}{Definition}[section]
\newtheorem{corollary}{Corollary}[section]
\newtheorem{proposition}{Proposition}[section]
\theoremstyle{definition}
\newtheorem{remark}{Remark}[section]
\newcommand{\R}{\mathbb R}
\newcommand{\calC}{\mathcal C}
\newcommand{\calL}{\mathcal L}
\newcommand{\dvol}{ d\text{Vol}_{g}}
\begin{document}

\title[The Boundary Yamabe Problem General Case]{The Boundary Yamabe Problem, II: General Constant Mean Curvature Case}
\author[J. Xu]{Jie Xu}
\address{
111 Cummington Mall, Department of Mathematics and Statistics, Boston University, Boston, MA, USA, 02215}
\email{xujie@bu.edu}
\address{
600 Dunyu Road, Institute for Theoretical Sciences, Westlake University, Hangzhou, Zhejiang, China, 310030}
\email{xujie67@westlake.edu.cn}

\date{}	
						
\begin{abstract} This article uses the iterative schemes and perturbation methods to completely solve the Han-Li conjecture, i.e. the general boundary Yamabe problem with prescribed constant scalar curvature and constant mean curvature on compact manifolds $ (M, \partial M, g) $ with non-empty smooth boundary, $ \dim M \geqslant 3 $. It is equivalent to show the existence of a real, positive, smooth solution of $ -\frac{4(n -1)}{n - 2} \Delta_{g} u + R_{g} u = \lambda u^{\frac{n+2}{n - 2}} $ in $ M $, $ \frac{\partial u}{\partial \nu} + \frac{n-2}{2} h_{g} u = \frac{n-2}{2} \zeta u^{\frac{n}{n - 2}} $ on $ \partial M $ with some constants $ \lambda, \zeta \in \R $. This boundary Yamabe problem is solved in cases according to the sign of the first eigenvalue $ \eta_{1} $ of the conformal Laplacian with homogeneous Robin condition. The signs of scalar curvature $ R_{g} $ and mean curvature $ h_{g} $ play an important role in this existence result. In contrast to the classical method, the Weyl tensor and classification of boundary points play no role in the proof. In addition, the method is not dimensional specific. The key steps include a new version of monotone iteration scheme for nonlinear elliptic PDEs and nonlinear boundary conditions, an introduction of the perturbed conformal Laplacian, a delicate gluing skill to construct a smooth super-solution and existence of solution of local Yamabe-type equations.
\end{abstract}

\maketitle

\section{Introduction}
This article is a sequel to \cite{XU4}, which solves the boundary Yamabe problem with minimal boundary. The key features and advantages of this local-to-global method are explained in \cite{XU4}. In this article, we are able to show that the same type of local analysis, iteration schemes and perturbation methods developed in \cite{XU4, XU3} can be applied to solve the general boundary Yamabe problem with prescribed constant scalar curvature and nontrivial constant mean curvature on compact manifolds $ (M, \partial M, g) $ with non-empty smooth boundary, with dimensions at least $ 3 $. The Yamabe-type problems are reduced to the analysis of a nonlinear elliptic PDE. Therefore this local-to-global analysis is uniform for solving the Yamabe equations with no boundary condition, with linear boundary condition, and also with non-linear boundary condition. An application of this local-to-global analysis is in the Kazdan-Warner problem, in which we prescribe general scalar curvature functions for metrics within a conformal class, especially for the interesting case: the compact manifolds with positive Yamabe invariants \cite{XU5}.
\medskip

Throughout this article, we denote $ (M, \partial M, g) $ to be a compact manifold with non-empty smooth boundary $ \partial M $, $ n = \dim M \geqslant 3 $. We also denote the interior to be $ M $. Let $ R_{g}, h_{g} $ be the scalar curvature and mean curvature of $ g $, respectively. Set $ a = \frac{4(n- 1)}{n - 2}, p = \frac{2n}{n - 2} $ . Let $ -\Delta_{g} $ be the positive definite Laplace-Beltrami operator. The existence of a metric $ \tilde{g} = u^{p-2} g $ on $ (M, \partial M, g) $ associated with a constant scalar curvature $ \lambda $ and a positive constant mean curvature $ \zeta $ is equivalent to the existence of a real, positive function $ u \in \calC^{\infty}(M) $ satisfying
\begin{equation}\label{intro:eqn1}
\begin{split}
\Box_{g} u & : =  -a\Delta_{g} u + R_{g} u = \lambda u^{p-1} \; {\rm in} \; M; \\
B_{g} u & : = \frac{\partial u}{\partial \nu} + \frac{2}{p-2}  h_{g} u = \frac{2}{p - 2} \zeta u^{\frac{p}{2}} \; {\rm on} \; \partial M.
\end{split}
\end{equation}

The main result of this paper is stated as follows.
\begin{theorem*}
Let $ (M, \partial M, g) $ be a compact manifold with non-empty smooth boundary, $ \dim M \geqslant 3 $. Let $ \eta_{1} $ be the first eigenvalue of the boundary eigenvalue problem $ \Box_{g} = \eta_{1} u $ in $ M $, $ B_{g} u = 0 $ on $ \partial M $. Then: \\
\begin{enumerate}[(i).]
\item If $ \eta_{1} = 0 $, (\ref{intro:eqn1}) admits a real, positive solution $ u \in \calC^{\infty}(M) $ with $ \lambda = \zeta = 0 $;
\item If $ \eta_{1} < 0 $, (\ref{intro:eqn1}) admits a real, positive solution $ u \in \calC^{\infty}(M) $ with some $ \lambda < 0 $ and $ \zeta > 0 $;
\item If $ \eta_{1} > 0 $, (\ref{intro:eqn1}) admits a real, positive solution $ u \in \calC^{\infty}(M) $ with some $ \lambda > 0 $ and $ \zeta > 0 $.
\end{enumerate}
\end{theorem*}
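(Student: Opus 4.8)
The plan is to reduce all three cases to a single device: a monotone iteration scheme for the pair $(\Box_g,B_g)$ that tolerates the nonlinear Robin condition, run between a small positive sub-solution and a positive super-solution assembled by gluing solutions of localized Yamabe-type problems, with a perturbed conformal Laplacian used to defuse the critical exponent. Case (i) is essentially immediate: when $\eta_1=0$ the first eigenfunction $\varphi_1$ of the Robin eigenvalue problem is positive and smooth (maximum principle plus elliptic regularity) and satisfies $\Box_g\varphi_1=0$, $B_g\varphi_1=0$, so $u=\varphi_1$ solves (\ref{intro:eqn1}) with $\lambda=\zeta=0$. For (ii) and (iii) I would first conformally normalize: replacing $g$ by $\varphi_1^{p-2}g$ makes $h_g\equiv 0$ and gives $R_g$ the strict, pointwise sign of $\eta_1$; this is free, since we must only exhibit \emph{some} $\lambda,\zeta$, and a solution of the normalized problem is conformal to one of the original. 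Moreover $u\mapsto tu$ sends $(\lambda,\zeta)$ to $(t^{2-p}\lambda,\,t^{1-p/2}\zeta)$, so only the signs of $\lambda,\zeta$ are intrinsic and their sizes are at our disposal.

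For the iteration, add a large constant $K$ and rewrite (\ref{intro:eqn1}) as $(\Box_g+K)u=Ku+\lambda u^{p-1}$, $(B_g+K)u=Ku+\tfrac{2}{p-2}\zeta u^{p/2}$; for $K$ large the right-hand sides are nondecreasing in $u$ on the order interval cut out by the barriers, and the linear problem for $(\Box_g+K,B_g+K)$ is uniquely solvable with positivity-preserving solution operator. Starting from the super-solution and using $L^q$/Schauder estimates, one obtains a monotone sequence converging to a weak solution trapped between the barriers; bootstrapping gives $u\in\calC^\infty(M)$, and the strong maximum principle with the boundary point lemma forces $u>0$. The sub-solution is the easy barrier: in case (ii), where $R_g<0$ and $h_g=0$, a small positive constant $c$ works, since $\Box_g c=R_g c\le\lambda c^{p-1}$ once $|\lambda|\,c^{p-2}\le-\sup_M R_g$, while $B_g c=0\le\tfrac{2}{p-2}\zeta c^{p/2}$; in case (iii), where $R_g>0$, the same constant works provided $\lambda$ is first chosen large enough (using the scaling above), or equivalently one takes a small multiple of a Robin eigenfunction of the (perturbed) conformal Laplacian.

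The crux is the positive super-solution, and this is where the local-to-global construction enters. On a small geodesic ball $B_\delta(x)$ in the interior, and on a small half-ball at a boundary point, the localized problem $\Box_g w=\lambda w^{p-1}$ — carrying, on a boundary half-ball, the genuine nonlinear condition $\tfrac{\partial w}{\partial\nu}=\tfrac{2}{p-2}\zeta w^{p/2}$ on the flat face and $w\to+\infty$ on the spherical face — admits a positive solution once $\delta$ is small, because the smallness of the chart makes the relevant local conformal invariant positive and the critical exponent costs nothing; the perturbed operator $\Box_g+(\text{small})$ is what absorbs the error terms generated when charts overlap. Covering $M$ by finitely many such balls and half-balls and taking a suitably regularized minimum of the local solutions (each extended by $+\infty$ off its chart) produces a smooth global super-solution $u_+$ which dominates the sub-solution and which on $\partial M$ satisfies $B_g u_+\ge\tfrac{2}{p-2}\zeta u_+^{p/2}$, with equality coming from whichever boundary piece attains the minimum. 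I expect this step — manufacturing a \emph{smooth} super-solution that is at once compatible with the nonlinear boundary inequality, survives the gluing without losing the super-solution property, and stays above $u_-$, all while the perturbation parameter can be sent to zero — to be the main obstacle; the interplay of the overlap regions with $\partial M$ is exactly where the delicate gluing alluded to in the introduction is needed. Once $u_-\le u_+$ is in place, the monotone iteration of the second paragraph closes all three cases.
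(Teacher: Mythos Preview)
Your plan for case (i) and your monotone-iteration framework are sound, and your conformal pre-processing is a reasonable variant of what the paper does. The genuine gap is the super-solution in case (iii). You propose, on each small chart, a solution of $\Box_g w=\lambda w^{p-1}$ with $w\to+\infty$ on the spherical face, and then a regularized infimum over charts. For $\lambda>0$ at the critical exponent this local blow-up solution does not exist: already for the model radial problem $-u''-\tfrac{n-1}{r}u'=\lambda u^{(n+2)/(n-2)}$ with $\lambda>0$ the quantity $\tfrac12(u')^2+\tfrac{\lambda(n-2)}{2n}u^{2n/(n-2)}$ is nonincreasing in $r$, so solutions with $u'(0)=0$ stay bounded and cannot blow up at a finite radius; the Loewner--Nirenberg large solutions you have in mind live on the $\lambda<0$ side. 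So the ``regularized minimum of local large solutions'' produces nothing to take the minimum of when $\eta_1>0$. A secondary symptom of the same misfit is your sub-solution: after normalizing to $R_g>0$, a small constant $c$ satisfies $R_g c\leqslant\lambda c^{p-1}$ only when $c^{p-2}\geqslant\sup R_g/\lambda$, i.e.\ $c$ must be \emph{large}, which then has to sit below a super-solution you have not built.

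The paper runs the barriers the other way around and uses a different normalization. It arranges $h_g>0$ everywhere and $R_g<0$ \emph{somewhere} (not $R_g>0$ everywhere). The sub-solution is a positive solution of the \emph{interior Dirichlet} problem $-a\Delta_g u+(R_g+\tau)u=\lambda_\tau u^{p-1}$, $u=0$ on $\partial\Omega$, on one small ball $\Omega$ where $R_g<0$, extended by zero to $M$; this is where the local Yamabe-type input lives, and it needs the negative sign and the perturbation $\tau<0$. The super-solution is a small multiple of an eigenfunction $\phi$ of a slightly shifted Robin problem (so that $B_g\phi=\beta\phi\geqslant \tfrac{2}{p-2}\zeta\phi^{p/2}$ on $\partial M$ for $\zeta$ small), corrected only on the single interior region where the sub-solution spikes above $\phi$, via a partition-of-unity gluing $\chi_1 u_1+\chi_2\phi+\chi_3(\phi+\gamma)$. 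The perturbation $\tau$ is not there to absorb gluing errors; it makes the local Dirichlet problem solvable, and the boundary Yamabe solution is obtained by letting $\tau\to0^-$ with $\lambda=\lambda_\tau$ chosen as the perturbed Yamabe quotient so that uniform $\calL^r$ bounds (hence $\calC^{2,\alpha}$ compactness) survive the limit. If you want to salvage your outline, swap the roles of your barriers in case (iii), normalize toward $R_g<0$ somewhere rather than $R_g>0$ everywhere, and replace the local blow-up pieces by a single eigenfunction-based global super-solution patched over the one interior ball that carries the sub-solution.
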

The boundary Yamabe problem for general case is thus completely solved. As in the minimal boundary case \cite{XU4}, the proof of the main theorem relies on the classification of the signs of $ \eta_{1}, R_{g} $ and $ h_{g} $. Case (i) is just the eigenvalue problem; Case (ii) is first solved in Theorem \ref{yamabe:thm2} under the assumption that $ h_{g} > 0 $ everywhere; the general situation when $ \eta_{1} < 0 $ then follows by Theorem \ref{conformal:thm1}; For Case (iii), we prove an existence result for the perturbed boundary Yamabe equation $ -a\Delta_{g} u + \left(R_{g} + \tau \right) u = \lambda_{\tau} u^{p-1} \; {\rm in} \; M $, $ \frac{\partial u}{\partial \nu} + \frac{2}{p-2}  h_{g} u = \frac{2}{p - 2} \zeta u^{\frac{p}{2}} \; {\rm on} \; \partial M $ is given for some specific choice of $ \lambda_{\tau} $, the boundary Yamabe equation with $ R_{g} < 0 $ somewhere and $ h_{g} > 0 $ everywhere is then solved in Theorem \ref{yamabe:thm4} by taking $ \tau \rightarrow 0^{-} $. When $ R_{g} \geqslant 0 $ everywhere, Theorems \ref{conformal:thm1} and \ref{conformal:thm2} are applied to obtain $ R_{\tilde{g}} < 0 $ somewhere and $ h_{\tilde{g}} > 0 $ everywhere for a new metric $ \tilde{g} $ within the conformal class of $ g $. For the importance of the local Yamabe-type equations and the introduction of the perturbation $ \tau $, see Remark \ref{pre:re00}. For the gluing skill and the smoothness of the super-solution, see Remark \ref{yamabe:sre}. For the key step in taking the limit $ \tau \rightarrow 0^{-} $, see Remark \ref{yamabe:sre2}.

Our method is originally inspired by the work of Hintz and Vasy on stability of the (gauged) Einstein equations \cite{Hintz, HVasy}. The connection between the Einstein equation and the Yamabe equation is quite interesting. Roughly speaking, the Einstein equation on a compact manifold is the Euler-Lagrange equation with respect to critical points of the Einstein-Hilbert functional $ \int_{M} R_{g} \dvol $ among all metric of volume $ 1 $, up to a constant. The Yamabe equation is the Euler-Lagrange equation with respect to the critical points of the same functional within a conformal class, after normalizing the volume. The Nash-Moser iteration scheme in the work of Hintz and Vasy inspires us to seek for an appropriate iteration scheme for the elliptic version. We applied other iterative methods in our earlier works \cite{XU2, XU}. We then applied the monotone iteration scheme, which was extended from the local version in \cite{SA} and the closed manifold version in \cite{KW}, to resolve the Yamabe-type equations. The key feature in this article is a new version of the monotone iteration scheme in Theorem \ref{iteration:thm1}, which is designed not only for nonlinear zeroth order terms in the PDE, but also zeroth order nonlinear terms in the boundary conditions.

The history of the Yamabe problem on closed manifolds, starting in 1960, is reviewed in \cite{XU3} and discussed thoroughly in its references. In 1992, Escobar \cite{ESC} initiated the study of the boundary Yamabe problem. Han and Li \cite{HL} conjectured the existence of the positive solution of (\ref{intro:eqn1}) for any constants $ \lambda, \zeta > 0 $. In a recent work of Chen and Sun \cite{CS}, they proved Han-Li conjecture for most cases, in which some restrictions of the geometry of the manifolds were imposed. However, the cases $ n \geqslant 6 $, $ M $ is not locally conformally flat, $ \partial M $ is umbilic, and the Weyl tensor vanishes identically on $ \partial M $ remained open if we consider general compact manifolds with boundary, which are exactly the cases left open in \cite{ESC} for minimal boundary case. In this article, we show the existence of solutions of (\ref{intro:eqn1}) for general $ (M, \partial M, g) $, $ \dim M \geqslant 3 $, associated with $ \zeta \geqslant 0 $ and general $ \lambda \in \R $. Other works in this direction by calculus of variations includes \cite{Brendle}, \cite{Escobar2}, \cite{HL}, etc. in minimal boundary case. In particular, \cite{Marques} among others worked on the case when $ \lambda = 0 $ and $ \tilde{h} $ is a constant. On closed manifolds, \cite{Aubin, PL} provided good survey with classical calculus of variations methods, while a direct analysis can be found in \cite{XU3}.  Another approach to consider Yamabe problem is the application of Yamabe flow for which we refer to \cite{BM}. 

For the crucial difference between the methods use here and the classical methods, we refer to the boundary Yamabe problem with minimal boundary case \cite{XU4}. Because of the nontrivial boundary term, we have to modify several techniques in \cite{XU4}. When $ \eta_{1} > 0 $, our local analysis, iteration schemes and perturbation methods bypass the analysis of $ \lambda(M) < \lambda(\mathbb{S}_{+}^{n}) $, where
\begin{equation*}
\lambda(M) = \inf_{u \neq 0}  \frac{\int_{M} \left( a\lvert \nabla_{g} u \rvert^{2} + R_{g} u^{2} \right) \dvol + \int_{\partial M} \frac{2a}{p-2}h_{g} u^{2} dS}{\left( \int_{M} u^{p} \dvol \right)^{\frac{2}{p}}}
\end{equation*}
and $ \mathbb{S}_{+}^{n} $ is the upper hemisphere of the standard $ n $-sphere. Instead, we construct a sequence of solutions $ \lbrace u_{\tau} \rbrace $ of
\begin{equation}\label{intro:eqn2}
-a\Delta_{g} u_{\tau} + \left(R_{g} + \tau \right) u_{\tau} = \lambda_{\tau} u_{\tau}^{p - 1} \; {\rm in} \; M, \frac{\partial u_{\tau}}{\partial \nu} + \frac{2}{p - 2} h_{g} u_{\tau} = \frac{2}{p-2} \zeta u_{\tau}^{\frac{p}{2}} \; {\rm on} \; \partial M.
\end{equation}
Here $ \tau $ is a small negative parameter. In addition, $ \lambda_{\tau} $ is a perturbation of $ \lambda(M) $,
\begin{equation*}
\lambda_{\tau} = \inf_{u \neq 0} \frac{\int_{M} \left( a\lvert \nabla_{g} u \rvert^{2} + \left(R_{g} + \tau \right) u^{2} \right) \dvol + \int_{\partial M} \frac{2a}{p-2}h_{g} u^{2} dS}{\left( \int_{M} u^{p} \dvol \right)^{\frac{2}{p}}}.
\end{equation*}
The general boundary Yamabe problem is then solved by showing the existence of $ u = \lim_{\tau \rightarrow 0^{-}} u_{\tau} $. The essential difficulty of this approach is to show that for some fixed $ \tau_{0} < 0 $ and $ r > p $, there exist constants $ C_{1}, C_{2} $, independent of $ \tau $,
\begin{equation}\label{intro:eqn3}
\lVert u_{\tau} \rVert_{\calL^{p}(M, g)} \geqslant C_{1} > 0, \lVert u_{\tau} \rVert_{\calL^{r}(M, g)} \leqslant C_{2} < \infty, \forall \tau \in [\tau_{0}, 0).
\end{equation}
As in the minimal boundary case, the upper bound of $ \calL^{r} $-norm in (\ref{intro:eqn3}) relies on the local results in Proposition \ref{pre:prop4} and monotone iteration scheme, especially the construction of super-solutions in Theorem \ref{yamabe:thm3}, which is essentially because of the boundary term $ \frac{2}{p - 2} h_{g} u $ on $ \partial M $. In this article, even the lower bound of $ \calL^{p} $-norm in (\ref{intro:eqn3}) requires the local results and the construction of sub-solutions in Theorem \ref{yamabe:thm3}, since the higher order nonlinear term $ \frac{2}{p-2} \zeta u^{\frac{p}{2}} $ appears in the general constant mean curvature case. In \cite{XU4}, a global $ \calL^{p}$-lower bound was obtained by global analysis. The local result in Proposition \ref{pre:prop4} within an interior domain of $ M $ avoids the analysis near $ \partial M $. The boundary conditions come into play only later in the monotone iteration scheme. Compared with the minimal boundary case in \cite{XU4}, we also need a new version of the global $ \calL^{p} $-type elliptic regularity and a modified monotone iteration scheme in this article, due to the nonlinear term $ \frac{2}{p-2} \zeta u^{\frac{p}{2}} $. In contrast to the classical methods, the local analysis here gives us some flexibility to choose $ \lambda $ and $ \zeta $ in (\ref{intro:eqn1}).
\medskip

This article is organized as follows. In \S2, definitions and essential tools are listed and proved if necessary. In \S3, a result in the perturbation theory for the eigenvalue problem of the conformal Laplacian is proved. In \S4, the monotone iteration scheme is applied to show the existence of the solutions of (\ref{intro:eqn1}) and (\ref{intro:eqn3}), respectively, assuming the existence of corresponding sub-solutions and super-solutions. In \S5, we recall results from \cite{XU4} that a Riemannian metric with general $ R_{g} $ and $ h_{g} $ can be conformally changed to a metric with more tractable scalar curvature and mean curvature; In \S6, the boundary Yamabe problem with nontrivial constant mean curvature is completely solved in the three cases of the main Theorem.
\medskip

\noindent {\bf{Acknowledgement}}: The author would like to thank Prof. Steve Rosenberg and Prof. Gang Tian for their great mentorships in this topic.

\section{The Preliminaries}
In this section, necessary definitions, setups and required tools for the main result are listed. In particular, required spaces and associated Sobolev spaces are defined; results of $ W^{s, p} $-type and $ H^{s} $-type elliptic regularities are listed; maximum principles and Sobolev embeddings are given; existence and uniqueness of solutions of some second order elliptic PDE with inhomogeneous Robin conditions are shown. Throughout this section, we consider the spaces with dimensions no less than $ 3 $.

\begin{definition}\label{pre:def1}
We say $ (\Omega, g) $ being a Riemannian domain if (i)$ \Omega $ is a connected, bounded, open subset of $ \R^{n} $ with smooth boundary $ \partial \Omega $ equipped with some Riemannian metric $ g $ that can be extended smoothly to $ \bar{\Omega} $; (ii) in addition, $ (\bar{\Omega}, g) $ must be a compact Riemannian manifold with boundary $ \partial \Omega $.
\end{definition}
\medskip

We define integer-ordered Sobolev spaces on compact manifolds with smooth boundary $ (M, \partial M, g) $, and on Riemannian domain. In this article, we always define the inclusion map
\begin{equation*}
\imath: \partial M \hookrightarrow M
\end{equation*}
from the boundary to the whole manifold, and thus $ \partial M $ admits a canonical Riemannian metric $ \imath^{*} g $.
\begin{definition}\label{pre:def2} Let $ (M, \partial M, g) $ be a compact Riemannian manifold with smooth boundary, let $ \dim M = n $. Let $ d\omega $ be the Riemannian density with local expression $ \dvol $. Let $ dS $ be the induced boundary density on $ \partial M $. For real valued functions $ u $, we set: 

(i) 
For $1 \leqslant p < \infty $, 
\begin{align*}
\mathcal{L}^{p}(M, g)\ &{\rm is\ the\ completion\ of}\ \left\{ u \in \calC_{c}^{\infty}(M) : \Vert u\Vert_{p,g}^p :=\int_{M} \left\lvert u \right\rvert^{p} d\omega < \infty \right\}; \\
\mathcal{L}^{p}(\Omega, g)\ &{\rm is\ the\ completion\ of}\ \left\{ u \in \calC_c^{\infty}(\Omega) : \Vert u\Vert_{p,g}^p :=\int_{\Omega} \left\lvert u \right\rvert^{p} d\text{Vol}_{g} < \infty \right\}.
\end{align*}

(ii)
For $\nabla$ the Levi-Civita connection of $g$, and for 
$ u \in \calC^{\infty}(M) $,
\begin{equation}\label{pre:eqn1}
\lvert \nabla^{k} u \rvert_g^{2} := (\nabla^{\alpha_{1}} \dotso \nabla^{\alpha_{k}}u)( \nabla_{\alpha_{1}} \dotso \nabla_{\alpha_{k}} u).
\end{equation}

In particular, $ \lvert \nabla^{0} u \rvert^{2}_g = \lvert u \rvert^{2}_g $ and $ \lvert \nabla^{1} u \rvert^{2}_g = \lvert \nabla u \rvert_{g}^{2} $.

(iii) For $ s \in \mathbb{N}, 1 \leqslant p < \infty $,
\begin{equation}\label{pre:eqn2}
\begin{split}
W^{s, p}(M, g) & = \left\{ u \in \mathcal{L}^{p}(M, g) : \lVert u \rVert_{W^{s, p}(M, g)}^{p} = \sum_{j=0}^{s} \int_{M} \left\lvert \nabla^{j} u \right\rvert^{p}_g d\omega < \infty \right\}; \\
W^{s, p}(\Omega, g) &= \left\{ u \in \mathcal{L}^{p}(\Omega, g) : \lVert u \rVert_{W^{s, p}(\Omega, g)}^{p} = \sum_{j=0}^{s} \int_{\Omega} \left\lvert \nabla^{j} u \right\rvert^{p}_g d\text{Vol}_{g} < \infty \right\}.
\end{split}
\end{equation}

Similarly, $ W_{0}^{s, p}(M, g) $ is the completion of $ \calC_{c}^{\infty}(M) $ with respect to the $ W^{s, p} $-norm. In particular, $ H^{s}(M, g) : = W^{s, 2}(M, g), s \in \mathbb{N}, 1 \leqslant p' < \infty $ are the usual Sobolev spaces, and we similarly define $H_{0}^{s}(M, g) $, $ W_{0}^{s, p}(\Omega, g) $ and $ H_{0}^{s}(\Omega, g) $.

(iv) With an open cover $ \lbrace U_{\xi}, \phi_{\xi} \rbrace $ of $ (M, \partial M, g) $ and a smooth partition of unity $ \lbrace \chi_{\xi} \rbrace $ subordinate to this cover, we can define the $ W^{s, p} $-norm locally, which is equivalent to the definition above.
\begin{equation*}
\lVert u \rVert_{W^{s, p}(M, g)} = \sum_{\xi} \lVert \left(\phi_{\xi}^{-1}\right)^{*} \chi_{\xi} u \rVert_{W^{s, p}(\phi_{\xi}(U_{\xi}), g)}.
\end{equation*}
\end{definition}
\medskip

A local $ L^{p} $ regularity is required for some type of Robin boundary condition, due to Agmon, Douglis, and Nirenberg \cite{Niren4}. It is worth mentioning that the result in Proposition \ref{pre:prop1} below holds when $ \Omega $ in particular is a $ n $ dimensional hemisphere denoted by $ \sum_{i = 1}^{n - 1} x_{i}^{2} + t^{2} \leqslant 1, t \geqslant 0 $ where the boundary condition is only defined on $ t = 0 $ and $ u $ in (\ref{pre:eqn3}) vanishes outside the hemisphere, see \cite[Thm.~15.1]{Niren4}. The Schauder estimates holds in the same manner, see \cite[Thm.~7.1, Thm.~7.2]{Niren4}. This is particularly useful since for the global analysis in next theorem, we will choose a cover of $ (M, \partial M, g) $, and for any boundary chart $ (U, \phi) $ of $ (M, \partial M, g) $, the intersection $ \phi(\bar{U} \cap M) $ is a one-to-one correspondence to a hemisphere, provided that $ \partial M $ is smooth enough. It resolves the issue for the boundary charts.

\begin{proposition}\label{pre:prop1}\cite[Thm.~7.3, Thm.~15.2]{Niren4} Let $ (\Omega, g) $ be a Riemannian domain where the boundary $ \partial \Omega $ satisfies Lipschitz condition. Let $ \nu $ be the outward unit normal vector along $ \partial \Omega $. Let $ L $ be the second order elliptic operator on $ \Omega $ with smooth coefficients up to $ \partial M $ and $ f \in \calL^{p}(\Omega, g) $, $ f' \in W^{1, p}(\Omega, g) $ for some $ p \in (1, \infty) $. Let $ u \in H^{1}(\Omega, g) $ be the weak solution of the following boundary value problem
\begin{equation}\label{pre:eqn3}
L u = f \; {\rm in} \; \Omega, Bu : = \frac{\partial u}{\partial \nu} + c(x) u = f' \; {\rm on} \; \partial \Omega,
\end{equation}
where $ c \in \calC^{\infty}(\partial \Omega) $. Then $ u \in W^{2, p}(\Omega, g) $ and the following estimates holds provided $ u \in \calL^{p}(\Omega, g) $:
\begin{equation}\label{pre:eqn4}
\lVert u \rVert_{W^{2, p}(\Omega, g)} \leqslant C^{*} \left( \lVert Lu \rVert_{\calL^{p}(\Omega, g)} + \lVert Bu \rVert_{W^{1, p}(\Omega, g)} +  \lVert u \rVert_{\calL^{p}(\Omega, g)} \right).
\end{equation}
Here the constant $ C^{*} $ depends on $ L, p $ and $ (\Omega, g) $.
\end{proposition}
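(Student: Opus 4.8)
The statement is the $\calL^{p}$ form of the Agmon--Douglis--Nirenberg estimates for a second order elliptic operator paired with a Robin boundary operator, so the plan is to follow the classical route: freeze the leading coefficients, reduce to explicit model problems on a half-space, and patch with a partition of unity. First I would reduce to local estimates. Cover $\bar{\Omega}$ by finitely many coordinate balls, separating interior balls --- where the standard interior Calder\'on--Zygmund estimate $\lVert u \rVert_{W^{2,p}} \leqslant C(\lVert Lu \rVert_{\calL^{p}} + \lVert u \rVert_{\calL^{p}})$ applies --- from boundary balls. On a boundary ball I would flatten $\partial\Omega$ by a change of variables (smooth in the hemisphere model that is actually used for boundary charts); this turns $L$ into another elliptic operator whose leading coefficients are continuous up to the flattened boundary, and turns $B$ into $\partial_{x_{n}} + (\text{lower order})$ up to a nonvanishing scalar factor, so the principal part of $B$ remains transversal to the boundary and nothing degenerates.

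Next I would establish the model estimate on $\R^{n}_{+}$ for the constant coefficient pair $(L_{0}, B_{0})$ obtained by freezing the leading coefficients of $L$ at the center of a boundary ball and dropping all lower order terms, $B_{0} = \partial_{x_{n}} + c_{0}$. Here one verifies the complementing (Lopatinski--Shapiro) condition: writing the characteristic ODE of $L_{0}$ in the tangential frequency $\xi'$, the bounded solutions on $(0,\infty)$ are spanned by functions $e^{-\mu(\xi')x_{n}}$ with $\mathrm{Re}\,\mu(\xi') > 0$, and the principal symbol of $B_{0}$ evaluated on such a solution is $-\mu(\xi') \neq 0$ for $\xi' \neq 0$; since $c_{0}$ is lower order it does not enter, so the complementing condition holds irrespective of the sign of $c_{0}$ (in particular for whatever sign $h_{g}$ will carry later). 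Solving $L_{0}v = f$, $B_{0}v|_{x_{n}=0} = f'$ by Fourier transform then represents $\nabla^{2}v$ as a Calder\'on--Zygmund singular integral operator acting on $f$ plus a Poisson-type operator acting on $f'$; the $\calL^{p}$ boundedness of these operators for $1 < p < \infty$ yields $\lVert v \rVert_{W^{2,p}(\R^{n}_{+})} \leqslant C(\lVert f \rVert_{\calL^{p}} + \lVert f' \rVert_{W^{1-1/p,p}(\partial)} + \lVert v \rVert_{\calL^{p}})$, and the trace theorem bounds $\lVert f' \rVert_{W^{1-1/p,p}(\partial)}$ by $\lVert Bu \rVert_{W^{1,p}(\Omega,g)}$, which accounts for the bulk norm on the right of (\ref{pre:eqn4}).

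Then I would run the perturbation argument. On a sufficiently small boundary ball the leading coefficients of the flattened $L$ differ from their frozen values by an arbitrarily small amount in $L^{\infty}$, so the resulting commutator terms are absorbed into the left side; the lower order terms of $L$ and of $B$ contribute only $\lVert u \rVert_{W^{1,p}}$, which by the interpolation inequality $\lVert u \rVert_{W^{1,p}} \leqslant \varepsilon \lVert u \rVert_{W^{2,p}} + C_{\varepsilon}\lVert u \rVert_{\calL^{p}}$ is also absorbed. Summing the interior and boundary local estimates against the partition of unity subordinate to the cover gives the global estimate (\ref{pre:eqn4}). For the regularity claim $u \in W^{2,p}(\Omega,g)$ from a weak $H^{1}$ solution, I would first upgrade $u$ to $H^{2}$ by the $p=2$ (Hilbert space) version, and then, given that $u \in \calL^{p}$ by hypothesis, apply the method of tangential difference quotients together with the equation (to recover the pure normal second derivative) and the a priori estimate on regularized problems to conclude $u \in W^{2,p}$; this is exactly where the clause ``provided $u \in \calL^{p}(\Omega,g)$'' is needed to start the argument.

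I expect the main obstacle to be the boundary analysis: verifying the complementing condition for the Robin operator and, more substantively, proving the $\calL^{p}$ bounds for the boundary singular integral and Poisson operators attached to the frozen model, since the interior Calder\'on--Zygmund theory and the partition-of-unity patching are routine by comparison. In the paper itself this entire analysis is delegated by citing \cite[Thm.~7.3, Thm.~15.2]{Niren4}, where these estimates --- including the hemisphere version needed for the boundary charts of $(M,\partial M, g)$ --- are carried out in full.
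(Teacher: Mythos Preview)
Your outline is a correct sketch of the classical Agmon--Douglis--Nirenberg argument, and you have correctly identified that the paper does not supply its own proof of this proposition: it is stated as a citation of \cite[Thm.~7.3, Thm.~15.2]{Niren4} and used as a black box. There is nothing to compare against, and your closing remark already records this accurately.
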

\medskip

\begin{theorem}\label{pre:thm1} Let $ (M, \partial M, g) $ be a compact manifold with non-empty smooth boundary. Let $ \nu $ be the unit outward normal vector along $ \partial M $. Let $ L $ be a uniform second order elliptic operator on $ M $ with smooth coefficients up to $ \partial M $. Let $ f \in \calL^{p}(M, g), \tilde{f} \in W^{1, p}(M, g) $. Let $ u \in H^{1}(M, g) $ be a weak solution of the following boundary value problem
\begin{equation}\label{pre:eqn5}
L u = f \; {\rm in} \; M, Bu : = \frac{\partial u}{\partial \nu} + c(x) u = \tilde{f} \; {\rm on} \; \partial M.
\end{equation}
Here $ c \in \calC^{\infty}(M) $. If, in addition, $ u \in \calL^{p}(M, g) $, then $ u \in W^{2, p}(M, g) $ with the following estimates
\begin{equation}\label{pre:eqn6}
\lVert u \rVert_{W^{2, p}(M, g)} \leqslant C \left( \lVert Lu \rVert_{\calL^{p}(M, g)} + \lVert Bu \rVert_{W^{1, p}(M, g)} + \lVert u \rVert_{\calL^{p}(M, g)} \right).
\end{equation}
Here $ C $ depends on $ L, p, c $ and the manifold $ (M, \partial M, g) $, and is independent of $ u $.
\end{theorem}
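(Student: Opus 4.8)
The plan is to localize $u$ by a partition of unity, reduce each piece to the local $\calL^{p}$-estimate of Proposition \ref{pre:prop1} (in the hemisphere form recorded just before its statement) together with the classical interior Calder\'on--Zygmund estimate, then sum and absorb the lower order terms produced by the cut-offs. First I would choose a finite cover $\{U_{\xi},\phi_{\xi}\}$ of the compact manifold $M$ by charts of two kinds: interior charts with $\overline{U_{\xi}}\subset M$ and $\phi_{\xi}(U_{\xi})$ a ball in $\R^{n}$, and boundary charts with $\phi_{\xi}(\overline{U_{\xi}}\cap M)$ a hemisphere and $\phi_{\xi}(\overline{U_{\xi}}\cap\partial M)$ the flat face $\{t=0\}$ --- possible since $\partial M$ is smooth. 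Fix a smooth partition of unity $\{\chi_{\xi}\}$ subordinate to this cover; by Definition \ref{pre:def2}(iv) it is enough to bound $\|(\phi_{\xi}^{-1})^{*}(\chi_{\xi} u)\|_{W^{2,p}(\phi_{\xi}(U_{\xi}),g)}$ for each $\xi$.

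For a fixed $\xi$, write $v_{\xi}=\chi_{\xi} u$ transplanted to $\phi_{\xi}(U_{\xi})$. It is supported away from the non-flat part of the chart boundary and solves $L v_{\xi}=\chi_{\xi} f+[L,\chi_{\xi}]u$ in $\phi_{\xi}(U_{\xi})$, and in the boundary case $B v_{\xi}=\chi_{\xi}\tilde f+u\,\partial_{\nu}\chi_{\xi}$ on the flat face, where $[L,\chi_{\xi}]$ is a first order operator with smooth coefficients, so $[L,\chi_{\xi}]u=O(|\nabla u|)+O(|u|)$. Assuming for now that $u\in W^{1,p}(M,g)$, the new right-hand sides lie in $\calL^{p}$ and $W^{1,p}$; applying the interior estimate on balls and Proposition \ref{pre:prop1} on hemispheres, then summing over the finitely many $\xi$, gives
\[
\|u\|_{W^{2,p}(M,g)}\le C\big(\|Lu\|_{\calL^{p}(M,g)}+\|Bu\|_{W^{1,p}(M,g)}+\|u\|_{W^{1,p}(M,g)}\big),
\]
with $C$ depending only on $L,p,c$ and $(M,\partial M,g)$. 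The interpolation inequality $\|u\|_{W^{1,p}(M,g)}\le\varepsilon\|u\|_{W^{2,p}(M,g)}+C_{\varepsilon}\|u\|_{\calL^{p}(M,g)}$, with $\varepsilon$ small, then lets me absorb the last term into the left side and obtain (\ref{pre:eqn6}).

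It remains to justify $u\in W^{1,p}(M,g)$. For $p\le 2$ this is automatic, since $M$ is compact and $u\in H^{1}(M,g)$. For $p>2$ I would bootstrap, starting from the already-available $p=2$ case (for which no bootstrap is needed, as $u\in H^{1}=W^{1,2}$ is given): since $f\in\calL^{p}\subset\calL^{2}$ and $\tilde f\in W^{1,p}\subset H^{1}$, that case yields $u\in H^{2}(M,g)$, hence $u\in W^{1,q_{1}}(M,g)$ for some $q_{1}>2$ by Sobolev embedding; re-running the localization with exponent $q_{1}$ (now legitimate, since $\nabla u\in\calL^{q_{1}}$) gives $u\in W^{2,q_{1}}(M,g)\subset W^{1,q_{2}}(M,g)$ with $q_{2}>q_{1}$, and after finitely many such steps the exponent passes $p$. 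The main obstacle is precisely this coupling of the $\calL^{2}$- and $\calL^{p}$-theories at the boundary: one must check that each transplanted Robin problem on a hemisphere genuinely meets the hypotheses of Proposition \ref{pre:prop1} --- smoothness up to the flat face of the transformed coefficients and of $c$, the vanishing of $v_{\xi}$ outside the hemisphere, and the correct transformation of $\partial u/\partial\nu$ under $\phi_{\xi}$ --- and that the constants accumulated through the finite bootstrap and the absorption step remain independent of $u$.
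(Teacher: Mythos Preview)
Your proposal is correct and follows essentially the same route as the paper: cover $M$ by interior and boundary charts, localize with a partition of unity, observe that the commutators $[L,\chi_{\xi}]$ and $\partial_{\nu}\chi_{\xi}$ produce lower-order terms, apply Proposition~\ref{pre:prop1} on hemispheres (and the interior estimate on balls), and sum. The paper writes down exactly the same localized equations (its (\ref{pre:eqn7})) and then defers the remaining details---summing, controlling the commutator terms, and absorbing lower order norms---to \cite[Thm.~3.1]{XU4}; your interpolation step and the $H^{2}\to W^{2,q_{1}}\to\cdots$ bootstrap are precisely the standard ingredients that reference supplies, so there is no genuine divergence in method.
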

\begin{proof} This proof is essentially the same as in Theorem 3.1 of \cite{XU4}. The only difference is to handle the inhomonegeous Robin boundary condition. Choose a finite cover of $ (M, \partial M, g) $, say
\begin{equation*}
(M, \partial M, g) = \left( \bigcup_{\alpha} (U_{\alpha}, \phi_{\alpha}) \right) \cup \left( \bigcup_{\beta} (U_{\beta}, \phi_{\beta}) \right)
\end{equation*}
where $ \lbrace U_{\alpha}, \phi_{\alpha} \rbrace $ are interior charts and $ \lbrace U_{\beta}, \phi_{\beta} \rbrace $ are boundary charts. Choose a partition of unity $ \lbrace \chi_{\alpha}, \chi_{\beta} \rbrace $ subordinate to this cover, where $ \lbrace \chi_{\alpha} \rbrace $ are associated with interior charts and $ \lbrace \chi_{\beta} \rbrace $ are associated with boundary charts. The local expression of the differential operator for interior charts is of the form
\begin{equation*}
L \mapsto \left( \phi_{\alpha}^{-1} \right)^{*} L \phi_{\alpha}^{*} : \calC^{\infty}(\phi_{\alpha}(U_{\alpha})) \rightarrow \calC^{\infty}(\phi_{\alpha}(U_{\alpha})) 
\end{equation*}
which can be extended to Sobolev spaces with appropriate orders. The same expression applies for boundary charts. Denote
\begin{align*}
L_{\alpha} & =  \left( \phi_{\alpha}^{-1} \right)^{*} L \phi_{\alpha}^{*}, L_{\beta} =  \left( \phi_{\beta}^{-1} \right)^{*} L \phi_{\beta}^{*}; \frac{\partial}{\partial \nu'} =  \left( \phi_{\beta}^{-1} \right)^{*} \left( \frac{\partial}{\partial \nu} \right)\phi_{\beta}^{*} \\
\left(\phi_{\alpha}^{-1} \right)^{*} \chi_{\alpha} & = \chi_{\alpha}', \left(\phi_{\beta}^{-1} \right)^{*} \chi_{\beta} = \chi_{\beta}'; \left(\phi_{\beta}^{-1} \right)^{*} c = c_{\beta}'; \\
\left(\phi_{\alpha}^{-1} \right)^{*} u & = u_{\alpha}', \left(\phi_{\beta}^{-1} \right)^{*} u = u_{\beta}', \left(\phi_{\alpha}^{-1} \right)^{*} f = f_{\alpha}', \left(\phi_{\beta}^{-1} \right)^{*} f = f_{\beta}', \left(\phi_{\beta}^{-1} \right)^{*} \tilde{f} = \tilde{f}_{\beta}'
\end{align*}
With these notations, the local expressions of our PDE with respect to $ u_{\alpha}', u_{\beta}' $ associated with (\ref{pre:eqn5}) in each chart, respectively, are as follows:
\begin{equation}\label{pre:eqn7}
\begin{split}
L_{\alpha} \left( \chi_{\alpha}' u_{\alpha}' \right) - [L_{\alpha}, \chi_{\alpha}']u_{\alpha}'  & = \chi_{\alpha}' f_{\alpha}' \; {\rm in} \; \phi_{\alpha}(U_{\alpha}), \chi_{\alpha}' u_{\alpha}'  = 0 \; {\rm on} \;\partial \phi_{\alpha}(U_{\alpha}); \\
L_{\beta} \left( \chi_{\beta}' u_{\beta}' \right) - [L_{\beta}, \chi_{\beta}']u_{\beta}'  & = \chi_{\beta}' f_{\beta}' \; {\rm in} \; \phi_{\beta}(U_{\beta}), \\
\frac{\partial \chi_{\beta}' u_{\beta}' }{\partial \nu'} + c_{\beta}' \chi_{\beta}' u_{\beta}' - \frac{\partial \chi_{\beta}'}{\partial \nu'} u_{\beta}' & = \chi_{\beta}' \tilde{f}_{\beta} \; {\rm on} \; \partial \phi_{\beta}(\bar{U}_{\beta} \cap M), \chi_{\beta}' u_{\beta}' = 0 \; {\rm on} \; \partial \phi_{\beta}(U_{\beta}) \backslash \left( \partial \phi_{\beta}(\bar{U}_{\beta} \cap M) \right).
\end{split}
\end{equation}
Here $ [L, \chi] $ is a commutator defined as
\begin{equation*}
[L, \chi] u = L(\chi u) - \chi (Lu).
\end{equation*}
Applying local estimates in Proposition \ref{pre:prop1} with the extra boundary term $ \chi_{\beta}' \tilde{f}_{\beta} $, and then glue estimates in each chart together, we conclude that (\ref{pre:eqn6}) holds. The argument is exactly the same as in \cite[Thm.~3.1]{XU4}.
\end{proof}
\medskip

When $ L $ is injective, we show in next theorem that $ \lVert u \rVert_{\calL^{p}(M, g)} $ can be absorbed by $ \lVert Lu \rVert_{\calL^{p}(M, g)} $.
\begin{theorem}\label{pre:thm2} Let $ (M, \partial M, g) $ be a compact manifold with non-empty smooth boundary. Let $ \nu $ be the unit outward normal vector along $ \partial M $ and $ p > \dim M $. Let $ L: \calC^{\infty}(M) \rightarrow \calC^{\infty}(M) $ be a uniform second order elliptic operator on $ M $ with smooth coefficients up to $ \partial M $ and can be extended to $ L : W^{2, p}(M, g) \rightarrow \calL^{p}(M, g) $. Let $ f \in \calL^{p}(M, g), \tilde{f} \in W^{1, p}(M, g) $. Let $ u \in H^{1}(M, g) $ be a weak solution of the following boundary value problem
\begin{equation}\label{pre:eqn8}
L u = f \; {\rm in} \; M, Bu = \frac{\partial u}{\partial \nu} + c(x) u = \tilde{f} \; {\rm on} \; \partial M.
\end{equation}
Here $ c \in \calC^{\infty}(M) $. Assume also that $ \text{Ker}(L) = \lbrace 0 \rbrace $ associated with the homogeneous Robin boundary condition. If, in addition, $ u \in \calL^{p}(M, g) $, then $ u \in W^{2, p}(M, g) $ with the following estimates
\begin{equation}\label{pre:eqn9}
\lVert u \rVert_{W^{2, p}(M, g)} \leqslant C' \left(\lVert Lu \rVert_{\calL^{p}(M, g)} + \lVert Bu \rVert_{W^{1, p}(M, g)} \right)
\end{equation}
Here $ C' $ depends on $ L, p, c $ and the manifold $ (M, \partial M, g) $, and is independent of $ u $.
\end{theorem}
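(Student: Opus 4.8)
The plan is to bootstrap from Theorem~\ref{pre:thm1}, which already supplies $ u \in W^{2,p}(M,g) $ together with the estimate (\ref{pre:eqn6}); the only new content is to absorb the lower-order term $ \lVert u \rVert_{\calL^{p}(M,g)} $ on the right-hand side of (\ref{pre:eqn6}) into the left-hand side, using the injectivity hypothesis $ \mathrm{Ker}(L) = \{0\} $. I would do this by the standard contradiction-and-compactness (``Peetre-type'') argument.

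First I would note that $ u \in W^{2,p}(M,g) $ is already furnished by Theorem~\ref{pre:thm1}, so it suffices to establish the a priori inequality
\begin{equation*}
\lVert v \rVert_{W^{2,p}(M,g)} \leqslant C' \left( \lVert Lv \rVert_{\calL^{p}(M,g)} + \lVert Bv \rVert_{W^{1,p}(M,g)} \right) \quad \text{for all } v \in W^{2,p}(M,g),
\end{equation*}
since applying it with $ v = u $ yields (\ref{pre:eqn9}). Suppose this inequality fails. Then there is a sequence $ \{ v_{k} \} \subset W^{2,p}(M,g) $ with $ \lVert v_{k} \rVert_{W^{2,p}(M,g)} = 1 $ for every $ k $, while $ \lVert L v_{k} \rVert_{\calL^{p}(M,g)} + \lVert B v_{k} \rVert_{W^{1,p}(M,g)} \to 0 $ as $ k \to \infty $. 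Since $ (M,\partial M, g) $ is compact, the embedding $ W^{2,p}(M,g) \hookrightarrow \calL^{p}(M,g) $ is compact (Rellich--Kondrachov), so after passing to a subsequence we may assume $ v_{k} \to v $ strongly in $ \calL^{p}(M,g) $ and weakly in $ W^{2,p}(M,g) $.

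Feeding the difference $ v_{k} - v_{m} $ into the estimate (\ref{pre:eqn6}) of Theorem~\ref{pre:thm1} gives
\begin{equation*}
\lVert v_{k} - v_{m} \rVert_{W^{2,p}(M,g)} \leqslant C \left( \lVert L(v_{k} - v_{m}) \rVert_{\calL^{p}(M,g)} + \lVert B(v_{k} - v_{m}) \rVert_{W^{1,p}(M,g)} + \lVert v_{k} - v_{m} \rVert_{\calL^{p}(M,g)} \right),
\end{equation*}
and every term on the right tends to $ 0 $, so $ \{ v_{k} \} $ is Cauchy in $ W^{2,p}(M,g) $; hence $ v_{k} \to v $ in $ W^{2,p}(M,g) $ and $ \lVert v \rVert_{W^{2,p}(M,g)} = 1 $. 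Since $ L : W^{2,p}(M,g) \to \calL^{p}(M,g) $ is bounded, $ Lv = \lim_{k} Lv_{k} = 0 $; and since the trace operators on $ W^{2,p}(M,g) $, applied to $ v_{k} $ and to $ \nabla v_{k} $, are continuous, $ Bv = \lim_{k} B v_{k} = 0 $ on $ \partial M $. Bootstrapping Theorem~\ref{pre:thm1} together with Sobolev embedding shows $ v $ is smooth, so $ v $ is a nonzero element of $ \mathrm{Ker}(L) $ with the homogeneous Robin condition --- contradicting the hypothesis. This establishes the a priori inequality, hence (\ref{pre:eqn9}), and the constant $ C' $ obtained this way depends only on $ L, p, c $ and $ (M,\partial M, g) $.

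The step I expect to be the main obstacle is the passage to the limit in the boundary condition: one must make sure that the notion of ``weak solution satisfying the homogeneous Robin condition'' underlying the definition of $ \mathrm{Ker}(L) $ is closed under $ W^{2,p} $-convergence, which is exactly where the trace theorems and the continuity of $ B $ on $ W^{2,p}(M,g) $ are needed. The interior regularity and the mechanism of absorbing a relatively compact lower-order term are otherwise routine.
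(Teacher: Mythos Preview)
Your proposal is correct and follows essentially the same approach as the paper: the paper reduces to showing the auxiliary bound $\lVert u \rVert_{\calL^{p}(M,g)} \leqslant E\bigl(\lVert Lu \rVert_{\calL^{p}(M,g)} + \lVert Bu \rVert_{W^{1,p}(M,g)}\bigr)$ and proves it by contradiction and compactness, exactly the Peetre-type argument you give. The only cosmetic difference is that you run the contradiction directly on the full $W^{2,p}$ estimate rather than on the auxiliary $\calL^{p}$ bound, which is an equivalent and equally standard formulation; your bootstrap to smoothness at the end is harmless but unnecessary, since the injectivity hypothesis already applies to $v \in W^{2,p}(M,g)$.
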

\begin{proof}
It is enough to show that there exists a constant $ E $, independent of $ u $, such that
\begin{equation}\label{pre:eqn10}
\lVert u \rVert_{\calL^{p}(M, g)} \leqslant E \left( \lVert Lu \rVert_{\calL^{p}(M, g)} + \lVert Bu \rVert_{W^{1, p}(M, g)}\right)
\end{equation}
provided that $ L $ is injective. The inequality (\ref{pre:eqn10}) is proven by contradiction, exactly the same as \cite[Thm.~3.2]{XU4}.
\end{proof}
\medskip

Maximum principles play a central role in nontriviality of the solution of (\ref{intro:eqn1}), which are stated below in local versions.
\begin{theorem}\label{pre:thm3} Let $ (\Omega, g) $ be a Riemannian domain with $ \dim \Omega \geqslant 3 $.
(i) \cite[Cor.~3.2]{GT} (Weak Maximum Principle) Let $ L $ be a second order elliptic operator of the form
\begin{equation*}
Lu = -\sum_{\lvert \alpha \rvert = 2} -a_{\alpha}(x) \partial^{\alpha} u + \sum_{\lvert \beta \rvert = 1} -b_{\beta}(x) \partial^\beta u + c(x) u
\end{equation*}
\noindent where $ a_{\alpha}, b_{\beta}, c \in \calC^{\infty}(\Omega) $ are smooth and bounded real-valued functions on the bounded domain $ \Omega \subset \R^{n} $. Let $ u \in \calC^2(\bar{\Omega}) $. Denote $u^{-} : = \min(u, 0)$, and $ u^{+} : = \max(u, 0) $. Then we have
\begin{equation}\label{pre:eqn11}
\begin{split}
Lu \geqslant 0, c(x) \geqslant 0 & \Rightarrow \inf_{\Omega} u = \inf_{\partial \Omega} u^{-}; \\
Lu \leqslant 0, c(x) \geqslant 0 & \Rightarrow \sup_{\Omega} u = \sup_{\partial \Omega} u^{+}.
\end{split}
\end{equation}

(ii) \cite[Thm.~3.1]{GT} Let $ u $, $ L $ and its coefficients be the same as in (i) above. Then we have
\begin{equation}\label{pre:eqn12}
\begin{split}
Lu \geqslant 0, c(x) = 0 & \Rightarrow \inf_{\Omega} u = \inf_{\partial \Omega} u; \\
Lu \leqslant 0, c(x) = 0 & \Rightarrow \sup_{\Omega} u = \sup_{\partial \Omega} u.
\end{split}
\end{equation}

(iii) \cite[Thm.~3.5]{GT} (Strong Maximum Principle) Let $ u \in \calC^{2}(\Omega) $. Assume that $\partial \Omega \in \calC^{\infty}$. Let $ L $ be a second order uniformly elliptic operator as above. If $Lu \geqslant 0$,  $ c(x) \geqslant 0 $, and if $ u $ attains a nonpositive minimum over $ \bar{\Omega} $ in an interior point, then $ u $ is constant within $ \Omega $; If $ Lu \leqslant 0 $, $ c(x) \geqslant 0 $, and if $ u $ attains a nonnegative maximum over $ \bar{\Omega} $ in an interior point, then $ u $ is constant within $ \Omega $.

(iv) \cite[Ch.~8]{GT} All weak and strong maximum principles above hold when $ u \in H^{1}(\Omega, g) $ or $ u \in H^{1}(M, g) $, respectively, provided that $ L $ is uniformly elliptic with some other restrictions.
\end{theorem}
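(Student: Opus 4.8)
Each clause of Theorem \ref{pre:thm3} is, after a change of sign and localization in a chart, one of the classical elliptic maximum principles, so the plan is to deduce them from Gilbarg--Trudinger with only a little glue, working from the weakest structural hypothesis toward the strongest.

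\emph{The weak maximum principles (i)--(ii).} I would establish (ii) first. By the uniform ellipticity of $L$ (constant $\theta > 0$) and the boundedness of its coefficients, for $\gamma$ large the function $w = e^{\gamma x_{1}}$ satisfies $Lw < 0$ on $\bar{\Omega}$ whenever $c = 0$. Since a $\calC^{2}$ function $z$ with no zeroth-order term and $Lz < 0$ admits no interior maximum (at an interior maximum $\nabla z = 0$ and $D^{2}z \leqslant 0$ force $Lz \geqslant 0$), the function $z_{\varepsilon} = -u + \varepsilon w$ attains its maximum over $\bar{\Omega}$ on $\partial\Omega$ for each $\varepsilon > 0$; letting $\varepsilon \to 0$ gives $\inf_{\Omega} u = \inf_{\partial\Omega} u$, and the subsolution line is the case $u \mapsto -u$. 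For (i) with $c \geqslant 0$, restrict to $\Omega^{-} = \{u < 0\}$ (if empty, nothing to prove): there $cu \leqslant 0$, so $(L - c)u = Lu - cu \geqslant 0$ with $L - c$ free of zeroth-order terms, and applying (ii) on each connected component shows $u$ attains its infimum over $\overline{\Omega^{-}}$ on $\partial\Omega^{-} \subseteq \partial\Omega \cup \{u = 0\}$; together with $u \geqslant 0$ off $\Omega^{-}$ this gives the comparison with $\inf_{\partial\Omega} u^{-}$.

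\emph{The strong maximum principle (iii).} This is the Hopf boundary-point lemma. Assuming $Lu \geqslant 0$, $c \geqslant 0$, that $u$ attains a nonpositive minimum $m$ at an interior point, and $u \not\equiv m$, I would choose an open ball $B$ with $\bar{B} \subset \Omega$, $u > m$ on $B$, and $u(y) = m$ for some $y \in \partial B$, introduce the comparison function $v = e^{-\alpha |x - x_{0}|^{2}} - e^{-\alpha \rho^{2}}$ on a thin annulus $\{\rho/2 < |x - x_{0}| < \rho\}$ about the center $x_{0}$ of $B$ with $\alpha$ large enough (using $c \geqslant 0$ and ellipticity) that $Lv \leqslant 0$ there, compare $u - m$ with a small multiple of $v$ via the already-proven weak maximum principle, and read off $\partial u / \partial\nu (y) < 0$, contradicting that $y$ is also an interior minimum of $u$; connectedness of $\Omega$ then forces $u \equiv m$, and the maximum version follows by $u \mapsto -u$.

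\emph{The weak-solution versions (iv), and the expected obstacle.} For $u \in H^{1}$ the pointwise arguments break down and one passes to the De Giorgi--Nash--Moser theory of \cite[Ch.~8]{GT}: the weak maximum principle is obtained by testing the weak form of $Lu \geqslant 0$ against $(u - \ell)^{+} \in H^{1}_{0}(\Omega, g)$ with $\ell = \sup_{\partial\Omega} u^{+}$ and using coercivity together with the sign of $c$, while the strong maximum principle for weak solutions follows from the weak Harnack inequality, whose hypotheses ($L$ uniformly elliptic with bounded measurable coefficients, plus the standard integrability of the lower-order coefficients) are exactly the ``other restrictions'' of (iv). Transfer to $u \in H^{1}(M, g)$ or $u \in H^{1}(\Omega, g)$ is done chart by chart, the metric contributing only smooth bounded coefficients, just as the local statements are patched in Theorem \ref{pre:thm1}. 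I expect (iv) to be the only real obstacle: one must determine which of the classical statements persist for merely $H^{1}$ solutions and fix the coefficient hypotheses under which the weak Harnack inequality (hence the strong maximum principle) holds, since the Hopf-barrier proof of (iii) has no direct weak-solution analogue; items (i)--(iii) are routine reductions to the $\calC^{2}$ theory in \cite[Ch.~3]{GT}.
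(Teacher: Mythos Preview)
Your sketches are correct and follow the standard Gilbarg--Trudinger arguments, but note that the paper gives no proof of this theorem at all: each item is simply stated with a citation to the relevant result in \cite{GT} (Cor.~3.2, Thm.~3.1, Thm.~3.5, and Ch.~8, respectively), and the theorem is used as a black box throughout. So there is no ``paper's own proof'' to compare against; you have supplied substantially more than the paper does by actually outlining the barrier construction for (ii), the reduction to $c=0$ on $\{u<0\}$ for (i), the Hopf lemma for (iii), and the De Giorgi--Nash--Moser route for (iv).
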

\medskip

Sobolev embeddings is critical for the regularity of solution of (\ref{intro:eqn1}).
\begin{proposition}\label{pre:prop2}\cite[Ch.~2]{Aubin}  (Sobolev Embeddings) 
Let $ (M, \partial M, g) $ be a compact manifold with non-empty smooth boundary $ \partial M $.

(i) For $ s \in \mathbb{N} $ and $ 1 \leqslant p \leqslant p' < \infty $ such that
\begin{equation}\label{pre:eqns1}
   \frac{1}{p} - \frac{s}{n} \leqslant \frac{1}{p'},
\end{equation}
\noindent  $ W^{s, p}(M, g) $ continuously embeds into $ \mathcal{L}^{p'}(M, g) $ with the following estimates: 
\begin{equation}\label{pre:eqns2}
\lVert u \rVert_{\calL^{p'}(M, g)} \leqslant K \lVert u \rVert_{W^{s, p}(M, g)}.
\end{equation}

(ii) For $ s \in \mathbb{N} $, $ 1 \leqslant p < \infty $ and $ 0 < \alpha < 1 $ such that
\begin{equation}\label{pre:eqns3}
  \frac{1}{p} - \frac{s}{n} \leqslant -\frac{\alpha}{n},
\end{equation}
Then  $ W^{s, p}(M, g) $ continuously embeds in the H\"older space $ \calC^{0, \alpha}(M) $ with the following estimates:
\begin{equation}\label{pre:eqns4}
\lVert u \rVert_{\calC^{0, \alpha}(M)} \leqslant K' \lVert u \rVert_{W^{s, p}(M, g)}.
\end{equation}

(iii) Both embeddings above are compact embeddings provided that the strict inequalities hold in (\ref{pre:eqns1}) and (\ref{pre:eqns3}), respectively.
\end{proposition}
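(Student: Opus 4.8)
The plan is to reduce the global statement on $(M,\partial M,g)$ to the classical Euclidean Sobolev and Morrey inequalities by means of a partition of unity, handling interior and boundary charts separately. First I would fix a finite atlas $\lbrace (U_{\xi},\phi_{\xi})\rbrace$ of $(M,\partial M,g)$ consisting of interior charts, for which $\phi_{\xi}(U_{\xi})$ is an open ball in $\R^{n}$, together with boundary charts, for which $\phi_{\xi}(\bar U_{\xi}\cap M)$ is a half-ball in $\R^{n}_{+}$ with the boundary of $M$ straightened onto $\lbrace t = 0\rbrace$; this is possible precisely because $\partial M$ is smooth, as already used for Proposition \ref{pre:prop1}. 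Take a smooth partition of unity $\lbrace\chi_{\xi}\rbrace$ subordinate to this cover. By part (iv) of Definition \ref{pre:def2}, $\lVert u\rVert_{W^{s,p}(M,g)}$ is equivalent to $\sum_{\xi}\lVert(\phi_{\xi}^{-1})^{*}(\chi_{\xi}u)\rVert_{W^{s,p}(\phi_{\xi}(U_{\xi}),g)}$; and on each fixed compact coordinate patch the metric $g$ is comparable to the Euclidean metric and has bounded derivatives of all orders, so the local norm built from $\lvert\nabla^{j}\cdot\rvert_{g}$ and $\dvol$ is equivalent to the one built from ordinary partial derivatives and Lebesgue measure. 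Hence it suffices to establish the two inequalities for functions compactly supported in a Euclidean ball, and for functions on a Euclidean half-ball that vanish near the spherical part of the boundary but not necessarily near the flat part $\lbrace t = 0\rbrace$.

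Next, on an interior chart the function $v=(\phi_{\xi}^{-1})^{*}(\chi_{\xi}u)$ is compactly supported, and iterating the Gagliardo--Nirenberg--Sobolev inequality (with the understanding that once the running exponent becomes non-positive one already lands in $\calC^{0}$, hence in every $\calL^{p'}$) gives $\lVert v\rVert_{\calL^{p'}(\R^{n})}\leqslant C\lVert v\rVert_{W^{s,p}(\R^{n})}$ whenever $\frac{1}{p}-\frac{s}{n}\leqslant\frac{1}{p'}$, while the Morrey inequality gives $\lVert v\rVert_{\calC^{0,\alpha}}\leqslant C\lVert v\rVert_{W^{s,p}}$ whenever $\frac{1}{p}-\frac{s}{n}\leqslant-\frac{\alpha}{n}$. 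On a boundary chart I would first apply a bounded Sobolev extension operator $E:W^{s,p}(\R^{n}_{+})\to W^{s,p}(\R^{n})$, available because the half-space — equivalently the straightened half-ball — has Lipschitz, indeed smooth, boundary, and then apply the same Euclidean estimates to $Ev$. Summing the finitely many local estimates and invoking the norm equivalence from Definition \ref{pre:def2}(iv) yields (\ref{pre:eqns2}) and (\ref{pre:eqns4}).

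For the compactness statement (iii), when the inequality in (\ref{pre:eqns1}) is strict I would choose $p''$ with $\frac{1}{p}-\frac{s}{n}<\frac{1}{p''}<\frac{1}{p'}$; then $W^{s,p}(M,g)\hookrightarrow\calL^{p''}(M,g)$ is compact by the Rellich--Kondrachov theorem applied chart by chart (again using $E$ on the boundary charts, and the fact that a finite sum of operators that are compact into a fixed Banach space is itself compact), while $\calL^{p''}(M,g)\hookrightarrow\calL^{p'}(M,g)$ is continuous since $M$ has finite volume and $p''\geqslant p'$ can be arranged; the composition gives the compact embedding. The Hölder case is handled analogously, using that $\calC^{0,\alpha'}(M)\hookrightarrow\calC^{0,\alpha}(M)$ is compact for $\alpha<\alpha'$ by the Arzel\`a--Ascoli theorem, after first embedding $W^{s,p}$ into a slightly better Hölder space via the strict version of (\ref{pre:eqns3}).

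The one genuinely non-bookkeeping ingredient is the treatment of the boundary charts: one needs a bounded extension operator on $W^{s,p}$ of the straightened boundary neighborhood, and this is exactly the point at which the smoothness of $\partial M$ enters (Lipschitz regularity would already be enough). Everything else — comparing $g$ with the Euclidean metric on compact patches, summing finitely many local inequalities, and interpolating against the finite volume of $M$ — is routine.
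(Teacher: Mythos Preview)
The paper does not supply a proof of this proposition at all; it is stated as a quotation from \cite[Ch.~2]{Aubin} and used as a black box throughout. Your proposal is the standard partition-of-unity reduction to the Euclidean Sobolev/Morrey inequalities and Rellich--Kondrachov, which is precisely the argument one finds in Aubin's book, so there is nothing to compare beyond noting that you have correctly reconstructed the textbook proof the paper is pointing to.

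One small remark on your compactness argument: the detour through an intermediate exponent $p''$ is harmless but unnecessary, since the Euclidean Rellich--Kondrachov theorem already gives compactness of $W^{s,p}\hookrightarrow\calL^{p'}$ directly whenever the strict inequality $\frac{1}{p}-\frac{s}{n}<\frac{1}{p'}$ holds; you can then globalize immediately via the finite chart cover without interpolating.
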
 
\medskip

The solvability of linear PDE with Robin boundary condition requires the trace theorem below.
\begin{proposition}\label{pre:prop3}\cite[Prop.~4.5]{T}
Let $ (M, \partial M, g) $ be a compact manifold with non-empty smooth boundary. Let $ u \in H^{1}(M, g) $. Then there exists a bounded linear operator
\begin{equation*}
T : H^{1}(M, g) \rightarrow \calL^{2}(\partial M, \imath^{*}g)
\end{equation*}
such that
\begin{equation}\label{pre:eqn13}
\begin{split}
T u & = u \bigg|_{\partial M}, \; \text{if} \; u \in \calC^{\infty}(M) \cap H^{1}(M, g); \\
\lVert T u \rVert_{\calL^{2}(\partial M, \imath^{*} g)} & \leqslant K'' \lVert u \rVert_{H^{1}(M, g)}.
\end{split}
\end{equation}
Here $ K'' $ only depends on $ (M, g) $ and is independent of $ u $. Furthermore, the map $ T : H^{1}(M, g) \rightarrow H^{\frac{1}{2}}(\partial M, \imath^{*} g) $ is surjective.
\end{proposition}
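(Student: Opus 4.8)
The plan is to prove this by localization to the half-space, a one-dimensional fundamental-theorem-of-calculus estimate combined with density, and a Fourier-analytic extension operator for the surjectivity claim. \emph{Reduction to the half-space.} Cover $(M,\partial M,g)$ by finitely many interior charts $\{U_\alpha,\phi_\alpha\}$ and boundary charts $\{U_\beta,\phi_\beta\}$, where each $\phi_\beta$ carries $\bar U_\beta\cap M$ onto a relatively open subset of the closed half-space $\overline{\R^n_+}=\{(y',y_n):y_n\geqslant 0\}$ with $\bar U_\beta\cap\partial M$ mapped into $\{y_n=0\}$; take a subordinate partition of unity $\{\chi_\alpha,\chi_\beta\}$. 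Interior charts contribute nothing to the boundary integral, so it suffices to treat each $v:=(\phi_\beta^{-1})^*(\chi_\beta u)$, which lies in $H^1$ with support compact in $\overline{\R^n_+}$ away from the part of $\partial(\phi_\beta(U_\beta))$ not lying in $\{y_n=0\}$. Since the chart changes are smooth with metric coefficients bounded up to the boundary, $\|u\|_{H^1(M,g)}$ is equivalent to the sum of the flat norms $\|v\|_{H^1(\R^n_+)}$, and $\|\cdot\|_{\calL^2(\partial M,\imath^*g)}$ to the sum of $\|\cdot\|_{\calL^2(\{y_n=0\})}$; assembling the local bounds then yields the global inequality with some constant $K''$.

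\emph{Trace inequality and density.} For $v\in\calC_c^\infty(\overline{\R^n_+})$ and fixed $y'\in\R^{n-1}$, the fundamental theorem of calculus together with $2|v|\,|\partial_{y_n}v|\leqslant |v|^2+|\partial_{y_n}v|^2$ gives
\[
|v(y',0)|^2=-\int_0^\infty \partial_{y_n}\bigl(|v(y',y_n)|^2\bigr)\,dy_n\leqslant \int_0^\infty\bigl(|v(y',y_n)|^2+|\partial_{y_n}v(y',y_n)|^2\bigr)\,dy_n,
\]
and integrating in $y'$ yields $\|v(\cdot,0)\|_{\calL^2(\R^{n-1})}^2\leqslant \|v\|_{H^1(\R^n_+)}^2$. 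Because $\calC^\infty(M)\cap H^1(M,g)$ is dense in $H^1(M,g)$ (mollification in the interior, and mollification after an inward translation in each boundary chart), the restriction map $u\mapsto u|_{\partial M}$, a priori defined on smooth functions only, extends uniquely to a bounded linear operator $T:H^1(M,g)\to\calL^2(\partial M,\imath^*g)$ with $Tu=u|_{\partial M}$ for $u\in\calC^\infty(M)\cap H^1(M,g)$ and $\|Tu\|_{\calL^2(\partial M,\imath^*g)}\leqslant K''\|u\|_{H^1(M,g)}$, as claimed.

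\emph{Surjectivity onto $H^{1/2}$.} It suffices to build a bounded linear extension $E:H^{1/2}(\partial M,\imath^*g)\to H^1(M,g)$ with $T\circ E=\mathrm{id}$, since then each $f\in H^{1/2}(\partial M,\imath^*g)$ equals $T(Ef)$. Working again in boundary charts, reduce to constructing $E:H^{1/2}(\R^{n-1})\to H^1(\R^n_+)$: given $f$ with Fourier transform $\hat f$, set $\widehat{Ef}(\xi,y_n)=\hat f(\xi)\,\varphi\bigl(y_n\langle\xi\rangle\bigr)$ with $\langle\xi\rangle=(1+|\xi|^2)^{1/2}$ and $\varphi\in\calC_c^\infty([0,\infty))$, $\varphi(0)=1$. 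A Plancherel computation in $(\xi,y_n)$, using $\int_0^\infty(|\varphi(t)|^2+|\varphi'(t)|^2)\,dt<\infty$, gives $\|Ef\|_{H^1(\R^n_+)}^2\lesssim\int_{\R^{n-1}}\langle\xi\rangle\,|\hat f(\xi)|^2\,d\xi=\|f\|_{H^{1/2}(\R^{n-1})}^2$, and $(Ef)(\cdot,0)=f$ because $\varphi(0)=1$. Patching the local extensions with the partition of unity and transporting them through the charts produces the global $E$, whence $T$ is onto $H^{1/2}(\partial M,\imath^*g)$.

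\emph{Main obstacle.} The delicate point is the last step: one must verify that the intrinsic space $H^{1/2}(\partial M,\imath^*g)$ on the closed manifold $\partial M$ coincides with the space produced by the local Fourier description (equivalently, that $H^{1/2}$ is the interpolation space $[\calL^2,H^1]_{1/2}$ and the trace space), so that the bound $\|Ef\|_{H^1}\lesssim\|f\|_{H^{1/2}}$ is coordinate-independent, and that the commutators of the partition-of-unity cutoffs with the nonlocal operator $E$ are of lower order and do not spoil it. The half-space estimates and the density argument in the earlier steps are standard.
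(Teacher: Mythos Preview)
The paper does not give its own proof of this proposition; it is quoted as a cited result (from Taylor's text) and stated without argument. Your proof is the standard one and is correct: localize to the half-space via a finite atlas of boundary charts and a subordinate partition of unity, obtain the $\calL^{2}$ trace inequality from the one-variable fundamental-theorem-of-calculus estimate, pass from smooth functions to $H^{1}$ by density, and for surjectivity onto $H^{1/2}$ build a bounded right inverse by the Fourier extension $\widehat{Ef}(\xi,y_{n})=\hat f(\xi)\,\varphi(y_{n}\langle\xi\rangle)$. The Plancherel computation you sketch indeed gives $\lVert Ef\rVert_{H^{1}(\R^{n}_{+})}^{2}\lesssim\int_{\R^{n-1}}\langle\xi\rangle\,|\hat f(\xi)|^{2}\,d\xi$, and $(Ef)(\cdot,0)=f$ since $\varphi(0)=1$.

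The obstacle you name---that the intrinsic $H^{1/2}(\partial M,\imath^{*}g)$ must coincide with the space assembled from local Fourier data, and that cutoff commutators with the nonlocal extension are harmless---is real but routine: on the closed manifold $\partial M$ the space $H^{1/2}$ is invariantly defined either by the spectral calculus of the Laplacian or by complex interpolation $[\calL^{2},H^{1}]_{1/2}$, and on each chart this agrees with the Fourier/Slobodeckij norm; multiplication by a smooth cutoff is bounded on $H^{1/2}$, so patching works. Nothing is missing from your argument.
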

\medskip

We consider the existence and uniqueness of the solutions of the following PDE
\begin{equation}\label{pre:eqn14}
-a\Delta_{g} u + Au = f \; {\rm in} \; M, \frac{\partial u}{\partial \nu} + c(x) u = \tilde{f} \; {\rm on} \; \partial M
\end{equation}
with appropriate choices of constants $ a, A $ and functions $ f, \tilde{f}, c $. Pairing any test function $ v \in \calC_{c}^{\infty}(M) $ on both sides of (\ref{pre:eqn13}), we have
\begin{equation*}
\int_{M} \left( a \nabla_{g} u \cdot \nabla_{g} v + Auv \right) d\omega + \int_{\partial M} c uv dS = \int_{M} fv d\omega + \int_{\partial M} \tilde{f} v dS.
\end{equation*}
Denote
\begin{equation}\label{pre:eqn15}
\tilde{L} v : = \int_{M} fv d\omega + \int_{\partial M} \tilde{f} v dS.
\end{equation}
We conclude immediately that $ \tilde{L} : H^{1}(M, g) \rightarrow \R $ is a bounded linear operator, provided that $ f \in \calL^{2}(M, g) $ and $ \tilde{f} \in H^{1}(M, g) $. It is natural to consider the solvability of (\ref{pre:eqn14}) in the weak sense.
\begin{definition}\label{pre:def3}
Let $ f \in \calL^{2}(M, g) $ and $ \tilde{f} \in H^{1}(M, g) $. Let $ \tilde{L} $ be defined as in (\ref{pre:eqn15}). Define
\begin{equation}\label{pre:eqn16}
B[u, v] = \int_{M} \left( a \nabla_{g} u \cdot \nabla_{g} v + Auv \right) d\omega + \int_{\partial M} c uv dS.
\end{equation}
We say that $ u \in H^{1}(M, g) $ is a weak solution of (\ref{pre:eqn13}) if and only if
\begin{equation*}
B[u, v] = \tilde{L} v, \forall v \in H^{1}(M, g).
\end{equation*}
\end{definition}
Next theorem provides the existence and uniqueness of the solution of (\ref{pre:eqn13}) and the injectivity of the operator $ -a\Delta_{g} + A $. The key is the assumption that $ c > 0 $ everywhere on $ \partial M $.
\begin{theorem}\label{pre:thm4}
Let $ (M, \partial M, g) $ be a compact manifold with non-empty smooth boundary. Let $ \nu $ be the unit outward normal vector along $ \partial M $. Let $ a, A > 0 $ be any positive constants. Let $ f \in \calL^{2}(M, g), \tilde{f} \in H^{1}(M, g) $ and $ c \in \calC^{\infty}(M) $ with $ c > 0 $ on $ \partial M $. Then the PDE (\ref{pre:eqn14}) has a unique weak solution $ u \in H^{1}(M, g) $ in the sense of Definition \ref{pre:def3}. Furthermore, $ -a\Delta_{g} + A $ is injective with homogeneous boundary condition
\begin{equation*}
\frac{\partial u}{\partial \nu} + c(x) u = 0 \; {\rm on} \; \partial M.
\end{equation*}
\end{theorem}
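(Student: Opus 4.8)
The plan is to read the weak formulation in Definition \ref{pre:def3} as a variational problem on the Hilbert space $ H^{1}(M, g) $ and apply the Lax--Milgram theorem to the bilinear form $ B[\cdot,\cdot] $ against the functional $ \tilde{L} $ of (\ref{pre:eqn15}). Three ingredients have to be verified: that $ \tilde{L} $ is a bounded linear functional on $ H^{1}(M,g) $, that $ B $ is bounded on $ H^{1}(M,g) \times H^{1}(M,g) $, and that $ B $ is coercive. The injectivity statement will then drop out of the coercivity estimate.

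First I would handle the two boundary integrals, which are the only terms not immediately controlled by Cauchy--Schwarz on $ M $. By the trace theorem (Proposition \ref{pre:prop3}) the restriction operator $ T : H^{1}(M,g) \to \calL^{2}(\partial M, \imath^{*}g) $ is bounded with constant $ K'' $, so for $ v \in H^{1}(M,g) $ one has $ \lvert \int_{\partial M} \tilde{f} v\, dS \rvert \leqslant \lVert T\tilde{f} \rVert_{\calL^{2}(\partial M, \imath^{*}g)} \lVert Tv \rVert_{\calL^{2}(\partial M, \imath^{*}g)} \leqslant (K'')^{2} \lVert \tilde{f} \rVert_{H^{1}(M,g)} \lVert v \rVert_{H^{1}(M,g)} $, using $ \tilde{f} \in H^{1}(M,g) $; combined with $ \lvert \int_{M} f v\, d\omega \rvert \leqslant \lVert f \rVert_{\calL^{2}(M,g)} \lVert v \rVert_{H^{1}(M,g)} $ this gives boundedness of $ \tilde{L} $. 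The same trace bound, together with $ c \in \calC^{\infty}(M) $ bounded on $ M $ and Cauchy--Schwarz applied to the gradient and zeroth-order interior terms, yields $ \lvert B[u,v] \rvert \leqslant C \lVert u \rVert_{H^{1}(M,g)} \lVert v \rVert_{H^{1}(M,g)} $ with $ C $ depending on $ a, A, \lVert c \rVert_{\infty}, K'' $.

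Next I would establish coercivity, which is where the hypotheses $ a, A > 0 $ and $ c > 0 $ on $ \partial M $ enter. Since $ \partial M $ is compact, $ c_{0} := \min_{\partial M} c > 0 $, so the boundary term is nonnegative and can simply be discarded:
\begin{equation*}
B[u,u] = a \int_{M} \lvert \nabla_{g} u \rvert^{2}\, d\omega + A \int_{M} u^{2}\, d\omega + \int_{\partial M} c\, u^{2}\, dS \geqslant \min(a, A)\, \lVert u \rVert_{H^{1}(M,g)}^{2}.
\end{equation*}
No Poincar\'e-type inequality is needed because $ A > 0 $ already dominates the $ \calL^{2} $-part of the $ H^{1} $-norm. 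The Lax--Milgram theorem then produces a unique $ u \in H^{1}(M,g) $ with $ B[u,v] = \tilde{L} v $ for all $ v \in H^{1}(M,g) $, i.e. a unique weak solution of (\ref{pre:eqn14}). For injectivity of $ -a\Delta_{g} + A $ under the homogeneous Robin condition, if $ u \in H^{1}(M,g) $ solves the corresponding weak problem with $ f = \tilde{f} = 0 $, then $ B[u,u] = 0 $, and the coercivity estimate forces $ \lVert u \rVert_{H^{1}(M,g)} = 0 $, hence $ u \equiv 0 $.

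I do not expect a serious obstacle here; the proof is a textbook Lax--Milgram application. The only point that genuinely requires care is that the two boundary integrals appearing in $ B $ and $ \tilde{L} $ are well defined and continuous on $ H^{1}(M,g) $ rather than merely on a smaller space, which is precisely what the trace theorem supplies, and the observation that the sign condition $ c > 0 $ is used only to drop the boundary term in the coercivity bound — the strict positivity of $ A $ does the rest — so no spectral or eigenvalue input is needed at this stage.
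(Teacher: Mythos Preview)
Your proposal is correct and follows essentially the same route as the paper: the paper also verifies boundedness of $\tilde{L}$ via the trace theorem and then invokes Lax--Milgram, deferring the remaining verifications (boundedness and coercivity of $B$, and the injectivity argument) to \cite[Thm.~4.2]{XU4}, which your write-up spells out explicitly.
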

\begin{proof} Due to Definition \ref{pre:def3}, we show the existence of a unique $ u \in H^{1}(M, g) $ such that
\begin{equation*}
B[u, v] = \tilde{L} v, \forall v \in H^{1}(M, g).
\end{equation*}
The linear operator $  \tilde{L}  : H^{1}(M, g) \rightarrow \R $ is continuous, since
\begin{align*}
\lvert  \tilde{L} v \rvert & \leqslant \int_{M} \lvert f \rvert \lvert v \rvert d\omega + \int_{\partial M} \lvert \tilde{f} \rvert \lvert v \rvert dS \leqslant \lVert f \rVert_{\calL^{2}(M, g)} \lVert v \rVert_{\calL^{2}(M, g)} + \lVert \tilde{f} \rVert_{\calL^{2}(\partial M, \imath^{*} g)} \lVert v\rVert_{\calL^{2}(\partial M, \imath^{*} g)} \\
& \leqslant \left( \lVert f \rVert_{\calL^{2}(M, g)} + \left( K'' \right)^{2} \lVert \tilde{f} \rVert_{H^{1}(M, g)} \right) \lVert v \rVert_{H^{1}(M, g)}.
\end{align*}
According to the same argument in \cite[Thm.~4.2]{XU4}, we conclude by Lax-Milgram \cite{Lax} that (\ref{pre:eqn13}) has a unique weak solution $ u \in H^{1}(M, g) $. The injectivity of the operator $ -a\Delta_{g} + A $ follows exactly the same as in Theorem 4.2 of \cite{XU4}.
\end{proof}
\medskip

When the first eigenvalue $ \eta_{1} $ of conformal Laplacian with Robin condition is positive, we need to consider a perturbed Yamabe equation first:
\begin{equation*}
-a\Delta_{g} u + \left(R_{g} + \tau \right) u = \lambda u^{p-1} \; {\rm in} \; M, B_{g} u = \frac{2}{p-2} \zeta u^{\frac{p}{2}} \; {\rm on} \; \partial M
\end{equation*}
for some negative constant $ \tau < 0 $. The following result, which is a local version of perturbed Yamabe equation with trivial Dirichlet boundary condition, plays a central role in this boundary Yamabe problem. We proved this result in \cite{XU3}, and applied this result to proof the Yamabe problem on closed manifolds and the boundary Yamabe problem with minimal boundary case.
\begin{proposition}\label{pre:prop4}\cite[Prop.~3.3]{XU3}
Let $ (\Omega, g) $ be Riemannian domain in $\R^n$, $ n \geqslant 3 $, with $C^{\infty} $ boundary, and with ${\rm Vol}_g(\Omega)$ and the Euclidean diameter of $\Omega$ sufficiently small. Let $ \tau < 0 $ be any negative constant. Assume $ R_{g} < 0 $ everywhere within the small enough closed domain $ \bar{\Omega} $. Then for any $ \lambda > 0 $ the following Dirichlet problem
\begin{equation}\label{pre:eqn17}
-a\Delta_{g} u + \left( R_{g} + \tau \right) u = \lambda u^{p-1} \; {\rm in} \; \Omega, u = 0 \; {\rm on} \; \partial \Omega.
\end{equation}
has a real, positive solution $ u \in \calC^{\infty}(\Omega) \cap \calC^{1, \alpha}(\bar{\Omega}) $ vanishes at $ \partial \Omega $. 
\end{proposition}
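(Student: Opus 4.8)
The plan is to solve (\ref{pre:eqn17}) variationally, following the approach of Brezis and Nirenberg, and then to recover every $\lambda > 0$ by scaling. First I would work on $H_0^1(\Omega, g)$ with the quadratic form $q_\tau(u) := \int_\Omega \big( a \lvert \nabla_g u \rvert^2 + (R_g + \tau) u^2 \big)\, \dvol$ and observe that, although $R_g + \tau < 0$ on $\bar\Omega$, the smallness of the Euclidean diameter of $\Omega$ forces a Poincar\'e inequality $\lVert u \rVert_{\calL^2(\Omega, g)}^2 \leqslant C_P \lVert \nabla_g u \rVert_{\calL^2(\Omega, g)}^2$ with $C_P$ as small as we wish; hence for $\Omega$ small enough $q_\tau(u) \geqslant \tfrac{a}{2} \lVert \nabla_g u \rVert_{\calL^2(\Omega, g)}^2$, so $q_\tau$ is an equivalent norm on $H_0^1(\Omega, g)$ and
\[
\Lambda_\tau := \inf\left\{\, q_\tau(u) : u \in H_0^1(\Omega, g),\ \lVert u \rVert_{\calL^p(\Omega, g)} = 1 \,\right\}
\]
is finite and strictly positive.

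The decisive point is to prove the strict inequality $\Lambda_\tau < a\, S_n$, where $S_n$ is the sharp constant in the Euclidean Sobolev inequality on $\R^n$ (on a domain of small diameter the Dirichlet--Sobolev constant of $(\Omega, g)$ differs from $a\, S_n$ by as little as we please), since $\Lambda_\tau < a\, S_n$ is precisely what keeps a minimizing sequence from concentrating. I would test $q_\tau$ against a truncated Talenti bubble $u_\varepsilon$ concentrating at an interior point of $\Omega$; the standard expansions yield
\[
\frac{q_\tau(u_\varepsilon)}{\lVert u_\varepsilon \rVert_{\calL^p(\Omega, g)}^2} = a\, S_n + O(\varepsilon^{n-2}) + \frac{a \int_\Omega (R_g + \tau) u_\varepsilon^2\, \dvol}{\lVert u_\varepsilon \rVert_{\calL^p(\Omega, g)}^2},
\]
and since $\sup_{\bar\Omega}(R_g + \tau) < 0$ the last term is bounded above by a strictly negative multiple of $\int_\Omega u_\varepsilon^2\, \dvol$, which is of order $\varepsilon^2$ when $n \geqslant 5$ and of order $\varepsilon^2 \lvert \log \varepsilon \rvert$ when $n = 4$, in either case beating the positive truncation error $O(\varepsilon^{n-2})$ as $\varepsilon \to 0$. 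This is the step I expect to be the main obstacle: when $n = 3$ the curvature gain and the truncation loss are both of order $\varepsilon$, so one must balance the concentration scale against the size of $\Omega$ and exploit that $g$ is $\calC^2$-close to the Euclidean metric on a small domain; this is exactly where the quantitative smallness of ${\rm Vol}_g(\Omega)$ and of the diameter is pinned down, and it is also why the Weyl tensor and the classification of boundary points play no role --- only the sign of $R_g + \tau$ drives the strict inequality.

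Granting $\Lambda_\tau < a\, S_n$, I would take a minimizing sequence $u_k$ with $\lVert u_k \rVert_{\calL^p(\Omega, g)} = 1$, extract a subsequence with $u_k \rightharpoonup u_0$ in $H_0^1(\Omega, g)$, and use the Brezis--Lieb splitting together with the strict inequality to exclude $u_0 \equiv 0$ (which would force $a\, S_n \leqslant \Lambda_\tau$); then $u_0$ is a minimizer with $\lVert u_0 \rVert_{\calL^p(\Omega, g)} = 1$, and after replacing $u_0$ by $\lvert u_0 \rvert$ we may assume $u_0 \geqslant 0$. Its Euler--Lagrange equation is $-a \Delta_g u_0 + (R_g + \tau) u_0 = \Lambda_\tau\, u_0^{p-1}$ in the weak sense. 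Writing the right-hand side as $\Lambda_\tau\, u_0^{p-1} - (R_g + \tau) u_0 \geqslant 0$ and running a Brezis--Kato type iteration to place $u_0$ in $\calL^q(\Omega, g)$ for every $q < \infty$, the local Dirichlet estimates of Proposition \ref{pre:prop1} together with the Sobolev embeddings of Proposition \ref{pre:prop2} bootstrap $u_0$ to $\calC^\infty(\Omega) \cap \calC^{1, \alpha}(\bar\Omega)$ with $u_0 = 0$ on $\partial\Omega$; the strong maximum principle (Theorem \ref{pre:thm3}(iii)) then upgrades $u_0 \geqslant 0$, $u_0 \not\equiv 0$, to $u_0 > 0$ in $\Omega$.

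Finally, given an arbitrary prescribed $\lambda > 0$, choose $c > 0$ with $c^{2-p} \Lambda_\tau = \lambda$ (possible since $p > 2$); a direct computation shows $u := c\, u_0$ satisfies $-a \Delta_g u + (R_g + \tau) u = \lambda u^{p-1}$ in $\Omega$ with $u = 0$ on $\partial\Omega$, $u > 0$, and $u \in \calC^\infty(\Omega) \cap \calC^{1, \alpha}(\bar\Omega)$, i.e.\ (\ref{pre:eqn17}) holds with the prescribed $\lambda$, which is the assertion.
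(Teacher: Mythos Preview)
Your constrained Brezis--Nirenberg minimisation is natural and goes through cleanly when $n\geqslant4$; the scaling at the end to reach an arbitrary $\lambda>0$ is also fine. The gap is the case $n=3$, which you flag but do not actually close. With a bubble truncated at scale $r\sim\operatorname{diam}\Omega$ one has in dimension~$3$
\[
\frac{q_\tau(u_\varepsilon)}{\lVert u_\varepsilon\rVert_{\calL^{p}}^{2}}
= aS_3 + C_1\,\frac{\varepsilon}{r} - C_2\,\lvert R_g+\tau\rvert\,\varepsilon r + o(\varepsilon),
\]
so $\Lambda_\tau<aS_3$ forces $\lvert R_g+\tau\rvert\,r^{2}$ to exceed a fixed positive constant, i.e.\ $\lvert R_g+\tau\rvert\gtrsim\lambda_1(\Omega)$. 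But the proposition fixes $R_g$ and $\tau$ and then asks for $\Omega$ small, which sends $\lambda_1\to\infty$ while $\lvert R_g+\tau\rvert$ stays bounded; the quantitative smallness conditions the paper records in (\ref{pre:eqn18}) are precisely $\lvert R_g+\tau\rvert\ll\lambda_1$. This is exactly the Brezis--Nirenberg obstruction on three-balls (no positive solution when the linear coefficient drops below $\lambda_1/4$), and ``balancing the concentration scale against the size of $\Omega$'' cannot rescue it --- shrinking $\Omega$ moves you the wrong way. So as written your scheme does not prove the proposition for $n=3$.

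The paper itself does not prove this proposition; it is quoted from \cite{XU3}, with the argument in Appendix~A there. From the hints left in the present paper --- the unconstrained energy
\[
J(u)=\int_\Omega\Bigl(\tfrac{a}{2}\lvert\nabla_g u\rvert^{2}-\tfrac{\lambda}{p}u^{p}-\tfrac12(R_g+\tau)u^{2}\Bigr)\dvol
\]
invoked in the proof of Proposition~\ref{yamabe:prop1}, the appeal to ``Theorem~1.1 of \cite{WANG}'' for the level bound $J(\tilde u_\tau)\leqslant K_0$, and the eigenvalue conditions (\ref{pre:eqn18}) --- the intended route is a mountain-pass/critical-point construction for $J$, not a Sobolev-quotient minimisation, and Remark~\ref{pre:re00} emphasises that it is the perturbation $\tau$ (not a bubble comparison with $S_n$) that makes the argument uniform in $n\geqslant3$. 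If you want to salvage your approach in dimension~$3$ you would need a genuinely different input (e.g.\ a Green-function/positive-mass correction) rather than the raw Talenti test.
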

\begin{remark}\label{pre:re00}
Let $ \lambda_{1} $ be the first nonzero eigenvalue of $ -\Delta_{g} $ on Riemannian domain $ (\Omega, g) $ with Dirichlet boundary condition. Recall that in Proposition 3.3 of \cite{XU3}, the smallness of $ \Omega $ is determined by
\begin{equation}\label{pre:eqn18}
\sup_{x \in M} \lvert R_{g} \rvert + \lvert \tau \rvert \leqslant a \lambda_{1}, \frac{a}{n} - \left( \frac{n - 2}{2n} + \frac{1}{2} \right) \left( \sup_{x \in M} \lvert R_{g} \rvert + \lvert \tau \rvert \right) \lambda_{1}^{-1} \geqslant 0.
\end{equation}
(\ref{pre:eqn18}) will be used in Section 6.

We point out that the introduction of $ \tau $ here gives the solvability of (\ref{pre:eqn17}) for all $ n \geqslant 3 $. In addition, the solvability is regardless of the vanishing/non-vanishing of the Weyl tensor since the perturbed conformal Laplacian is no longer a conformal invariance, and thus different choices of local curvature conditions play important roles for the existence of the solution of (\ref{pre:eqn17}).
\end{remark}

\section{Yamabe Invariants and Eigenvalue Problem of Conformal Laplacian}
In this section, we discuss the first eigenvalue of conformal Laplacian $ \Box_{g} $ with oblique boundary condition $ B_{g} $. In order to construct super-solutions for later sections, we discuss an eigenvalue problem of conformal Laplacian with a slightly different boundary condition.

Consider the eigenvalue problem of conformal Laplacian $ \Box_{g} $ with boundary condition $ B_{g} \varphi = 0 $
\begin{equation}\label{eigen:eqn1}
-a\Delta_{g} \varphi + R_{g} \varphi = \eta_{1} \varphi \; {\rm in} \; M, \frac{\partial \varphi}{\partial \nu} + \frac{2}{p-2} h_{g} \varphi = 0 \; {\rm on} \; \partial M.
\end{equation}
It is well known that (\ref{eigen:eqn1}) admits a real, positive solution $ \varphi \in \calC^{\infty}(M) $ with $ \eta_{1} $ the smallest nonzero eigenvalue.The first result is to show that the first eigenvalue $ \eta_{1} $ of $ \Box_{g} $ with homogenous Robin condition is a conformal invariant, due to Escobar \cite{ESC}.
\begin{proposition}\label{eigen:prop1}\cite[Prop.~1.3.]{ESC}
Let $ \tilde{g} = u^{p-2} g $ be a conformal metric to $ g $. Let $ \eta_{1} $ and $ \tilde{\eta}_{1} $ be the first eigenvalue of $ \Box_{g} $ and $ \Box_{\tilde{g}} $ with boundary conditions $ B_{g} = 0 $ and $ B_{\tilde{g}} = 0 $, respectively. Then either the signs of $ \eta_{1} $ and $ \tilde{\eta}_{1} $ are the same or $ \eta_{1} = \tilde{\eta}_{1} = 0 $.
\end{proposition}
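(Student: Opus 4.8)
The plan is to combine the conformal covariance of the pair $(\Box_{g}, B_{g})$ with the Rayleigh-quotient characterization of the first eigenvalue. Recall the transformation laws under $\tilde{g} = u^{p-2}g$, with $u \in \calC^{\infty}(M)$, $u > 0$: for every $\psi \in \calC^{\infty}(M)$ one has $\Box_{g}(u\psi) = u^{p-1}\Box_{\tilde{g}}\psi$ in $M$ and $B_{g}(u\psi) = u^{p/2}B_{\tilde{g}}\psi$ on $\partial M$. The first is the usual conformal covariance of the conformal Laplacian (note $p - 1 = \tfrac{n+2}{n-2}$); the second follows from $\nu_{\tilde{g}} = u^{-(p-2)/2}\nu_{g}$ together with the conformal change of mean curvature, using $\tfrac{2}{p-2} = \tfrac{n-2}{2}$ and $\tfrac{p}{2} = \tfrac{n}{n-2}$. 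Recall also that $\eta_{1} = \eta_{1}(g)$ admits the variational characterization
\[
\eta_{1}(g) = \inf_{0 \neq v \in H^{1}(M,g)} \frac{E_{g}(v)}{\lVert v \rVert_{\calL^{2}(M,g)}^{2}}, \qquad E_{g}(v) := \int_{M}\left( a\lvert\nabla_{g}v\rvert^{2} + R_{g}v^{2}\right)\dvol + \int_{\partial M}\frac{2a}{p-2}h_{g}v^{2}\,dS,
\]
the infimum being attained at the positive eigenfunction $\varphi$ of (\ref{eigen:eqn1}); by density of $\calC^{\infty}(M)$ in $H^{1}(M,g)$ one may restrict the infimum to smooth $v$.

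First I would transplant the $\tilde{g}$-eigenfunction to $(M,g)$. Let $\varphi_{\tilde{g}} \in \calC^{\infty}(M)$, $\varphi_{\tilde{g}} > 0$, solve $\Box_{\tilde{g}}\varphi_{\tilde{g}} = \tilde{\eta}_{1}\varphi_{\tilde{g}}$ and $B_{\tilde{g}}\varphi_{\tilde{g}} = 0$, and set $\psi := u\,\varphi_{\tilde{g}}$, which is smooth, positive, hence bounded between two positive constants since $M$ is compact. The covariance identities give
\[
\Box_{g}\psi = u^{p-1}\Box_{\tilde{g}}\varphi_{\tilde{g}} = \tilde{\eta}_{1}u^{p-2}\psi \ \text{ in } M, \qquad B_{g}\psi = u^{p/2}B_{\tilde{g}}\varphi_{\tilde{g}} = 0 \ \text{ on } \partial M,
\]
so $\psi$ solves $\Box_{g}\psi = W\psi$, $B_{g}\psi = 0$, where $W := \tilde{\eta}_{1}u^{p-2}$ has, on all of $M$, the sign of $\tilde{\eta}_{1}$. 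I would then run the ground-state substitution: for smooth $v$ write $v = \psi w$ with $w = v/\psi \in \calC^{\infty}(M)$, expand $\lvert\nabla_{g}(\psi w)\rvert^{2}$, and integrate the cross term $2a\psi w\,\nabla_{g}\psi\cdot\nabla_{g}w$ by parts against $\psi w^{2}$; inserting $-a\Delta_{g}\psi = W\psi - R_{g}\psi$ in the interior and $\partial_{\nu}\psi = -\tfrac{2}{p-2}h_{g}\psi$ on $\partial M$ (this last from $B_{g}\psi = 0$), the $R_{g}$ bulk terms and the $h_{g}$ boundary terms cancel and one is left with
\[
E_{g}(\psi w) = \int_{M} a\,\psi^{2}\lvert\nabla_{g}w\rvert^{2}\,\dvol + \int_{M} W\,\psi^{2}w^{2}\,\dvol.
\]
Since $v \mapsto v/\psi$ is a bijection of $\calC^{\infty}(M)$, this produces the weighted Rayleigh quotient
\[
\eta_{1}(g) = \inf_{w \neq 0} \frac{\int_{M} a\,\psi^{2}\lvert\nabla_{g}w\rvert^{2}\,\dvol + \int_{M} W\,\psi^{2}w^{2}\,\dvol}{\int_{M}\psi^{2}w^{2}\,\dvol}.
\]

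Finally I would read off the sign from this quotient. If $\tilde{\eta}_{1} > 0$, then $W \geqslant \tilde{\eta}_{1}(\min_{M}u)^{p-2} > 0$, so every admissible quotient is $\geqslant \tilde{\eta}_{1}(\min_{M}u)^{p-2} > 0$, whence $\eta_{1}(g) > 0$. If $\tilde{\eta}_{1} < 0$, testing with $w \equiv 1$ gives $\eta_{1}(g) \leqslant \frac{\int_{M}W\psi^{2}\,\dvol}{\int_{M}\psi^{2}\,\dvol} < 0$. If $\tilde{\eta}_{1} = 0$, then $W \equiv 0$, so the quotient is nonnegative and vanishes at $w \equiv 1$, forcing $\eta_{1}(g) = 0$. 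Thus $\eta_{1}(g)$ and $\tilde{\eta}_{1}$ are either both zero or of the same sign; since $g = (u^{-1})^{p-2}\tilde{g}$, the two metrics play symmetric roles and this one direction already gives the claim.

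The only delicate (but routine) point is the integration-by-parts computation yielding the identity for $E_{g}(\psi w)$: one must keep careful track of the boundary integrals so that the normal-derivative term is annihilated precisely by $B_{g}\psi = 0$ while the two mean-curvature boundary contributions cancel against each other, and — if one prefers to argue directly in $H^{1}(M,g)$ rather than on smooth functions — invoke the trace theorem of Proposition \ref{pre:prop3} to justify the boundary terms under the density approximation. The conformal covariance identities and the three-case sign analysis are then formal.
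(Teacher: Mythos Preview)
Your argument is correct. The paper does not supply its own proof of this proposition; it simply quotes Escobar \cite[Prop.~1.3]{ESC}, so there is nothing in the paper to compare against line by line.

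That said, your route is somewhat more elaborate than necessary. Once you have the covariance identities $\Box_{g}(u\psi)=u^{p-1}\Box_{\tilde g}\psi$ and $B_{g}(u\psi)=u^{p/2}B_{\tilde g}\psi$, an integration by parts together with $d\omega_{\tilde g}=u^{p}d\omega_{g}$ and $dS_{\tilde g}=u^{2(n-1)/(n-2)}dS_{g}$ yields directly $E_{\tilde g}(\psi)=E_{g}(u\psi)$ for every $\psi$. Since $v\mapsto v/u$ is a bijection on test functions and the denominators in the two Rayleigh quotients are positive, the sign of $\eta_{1}$ is governed solely by whether $E_{g}$ can be made negative, is nonnegative with a nontrivial zero, or is strictly positive on nonzero functions---and this trichotomy is manifestly preserved under $v\leftrightarrow u\psi$. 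Your ground-state substitution recovers the same conclusion and additionally produces the weighted quotient identity, but the extra computation (the careful cancellation of the $R_{g}$ and $h_{g}$ terms) is not needed for the bare sign statement. Either way the proof is sound.
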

\medskip

We consider the following slightly different eigenvalue problem
\begin{equation}\label{eigen:eqn2}
-a\Delta_{g} \phi + R_{g} \phi = \eta_{1, \beta} \phi \; {\rm in} \; M, \frac{\partial \phi}{\partial \nu} + \left(\frac{2}{p-2}h_{g} - \beta \right) \phi = 0 \; {\rm on} \; \partial M
\end{equation}
for some constant $ \beta > 0 $.
However, $ \eta_{1}' $ is not the first eigenvalue of conformal Laplacian, and hence is not a conformal invariant. Next theorem shows the existence of the solution of (\ref{eigen:eqn2}), and the choices of $ \beta $ so that $ \eta_{1, \beta} > 0 $ whenever $ \eta_{1} > 0 $ for some metric $ g $.
\begin{theorem}\label{eigen:thm1}
Let $ (M, \partial M, g) $ be a compact manifold with non-empty smooth boundary. Let $ \nu $ be the unit outward normal vector along $ \partial M $. If the first eigenvalue $ \eta_{1} $ of conformal Laplacian $ \Box_{g} $ and the mean curvature $ h_{g} $ are positive, then there exists some constant $ C_{\eta_{1}} > 0 $ such that for all $ \beta \in [0, C_{\eta_{1}}] $, the eigenvalue problem (\ref{eigen:eqn2}) has a real, positive solution $ \phi \in \calC^{\infty}(M) $ with some $ \eta_{1, \beta} > 0 $ depending on $ \beta $.
\end{theorem}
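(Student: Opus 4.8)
The plan is to characterise $\eta_{1,\beta}$ as a constrained minimum and then use the positivity of $h_{g}$ to absorb the boundary perturbation. Pairing (\ref{eigen:eqn2}) with a test function and integrating by parts (exactly as in Definition \ref{pre:def3}) shows that $\eta_{1,\beta}$ is the bottom of the spectrum, i.e. $\eta_{1,\beta} = \inf_{0 \ne u \in H^{1}(M,g)} Q_{\beta}[u]$ with
\[
Q_{\beta}[u] := \frac{a\int_{M}|\nabla_{g}u|^{2}\,d\omega + \int_{M}R_{g}u^{2}\,d\omega + \int_{\partial M}\bigl(\tfrac{2a}{p-2}h_{g}-a\beta\bigr)u^{2}\,dS}{\int_{M}u^{2}\,d\omega},
\]
so that $Q_{0}$ is precisely the Rayleigh quotient with infimum $\eta_{1}=\eta_{1,0}>0$, attained at the positive eigenfunction $\varphi$ of (\ref{eigen:eqn1}). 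The eigenvalue $\eta_{1,\beta}$ obtained this way visibly depends on $\beta$, as required.

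The first and main step is a uniform a priori bound: I claim there is a constant $C_{\eta_{1}}>0$, depending only on $\eta_{1}$, on $h_{0}:=\min_{\partial M}h_{g}>0$, on $\sup_{M}|R_{g}|$ and on $(M,\partial M,g)$, such that $Q_{\beta}[u]\geq \eta_{1}/2$ for every $u\ne 0$ and every $\beta\in[0,C_{\eta_{1}}]$. Normalise $\int_{M}u^{2}\,d\omega=1$ and suppose $Q_{\beta}[u]<\eta_{1}/2$. Since $Q_{\beta}[u]=Q_{0}[u]-a\beta\int_{\partial M}u^{2}\,dS\geq \eta_{1}-a\beta\int_{\partial M}u^{2}\,dS$, this forces $\int_{\partial M}u^{2}\,dS>\eta_{1}/(2a\beta)$. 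On the other hand, from the identity defining $Q_{\beta}[u]$, dropping the nonnegative gradient term and using $h_{g}\geq h_{0}$,
\[
\Bigl(\tfrac{2ah_{0}}{p-2}-a\beta\Bigr)\int_{\partial M}u^{2}\,dS\ \leq\ Q_{\beta}[u]-\int_{M}R_{g}u^{2}\,d\omega\ <\ \tfrac{\eta_{1}}{2}+\sup_{M}|R_{g}|.
\]
Restricting to $\beta\leq h_{0}/(p-2)$ makes the left-hand coefficient at least $ah_{0}/(p-2)$, hence $\int_{\partial M}u^{2}\,dS\leq C_{0}$ with $C_{0}:=(p-2)\bigl(\tfrac{\eta_{1}}{2}+\sup_{M}|R_{g}|\bigr)/(ah_{0})$ independent of $\beta$. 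Combining the two inequalities forces $\beta>\eta_{1}/(2aC_{0})$, so $C_{\eta_{1}}:=\min\{h_{0}/(p-2),\,\eta_{1}/(2aC_{0})\}$ does the job, and $\eta_{1,\beta}\geq \eta_{1}/2>0$ for all $\beta\in[0,C_{\eta_{1}}]$; testing with $\varphi$ gives the harmless companion bound $\eta_{1,\beta}\leq\eta_{1}$.

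It remains to produce the positive eigenfunction, which is standard. For $\beta\in[0,C_{\eta_{1}}]$ the boundary integrand $\tfrac{2a}{p-2}h_{g}-a\beta$ is $\geq ah_{0}/(p-2)\geq0$, so along a minimising sequence with $\int_{M}u_{k}^{2}\,d\omega=1$ the identity defining $Q_{\beta}$ bounds $\int_{M}|\nabla_{g}u_{k}|^{2}\,d\omega$; the compact embedding $H^{1}(M,g)\hookrightarrow\calL^{2}(M,g)$ and the compact trace $H^{1}(M,g)\to\calL^{2}(\partial M,\imath^{*}g)$ (Propositions \ref{pre:prop2} and \ref{pre:prop3}), together with weak lower semicontinuity of the Dirichlet energy, yield a minimiser $\phi_{\beta}\in H^{1}(M,g)$. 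This $\phi_{\beta}$ is a weak solution of (\ref{eigen:eqn2}) with eigenvalue $\eta_{1,\beta}$; writing the Robin condition as $\partial_\nu\phi_\beta+\bigl(\tfrac{2}{p-2}h_g-\beta\bigr)\phi_\beta=0$ and invoking the inhomogeneous-Robin regularity of Proposition \ref{pre:prop1} and Theorem \ref{pre:thm1}, bootstrapped against the Sobolev embeddings of Proposition \ref{pre:prop2}, upgrades $\phi_{\beta}$ to $\calC^{\infty}(M)$. Replacing $\phi_{\beta}$ by $|\phi_{\beta}|$ (which has the same $Q_{\beta}$) we may take $\phi_{\beta}\geq0$; rewriting the interior equation as $-a\Delta_{g}\phi_{\beta}+(R_{g}-\eta_{1,\beta}+K)\phi_{\beta}=K\phi_{\beta}\geq0$ with $K$ large enough that the zeroth-order coefficient is positive, the strong maximum principle of Theorem \ref{pre:thm3} (and its boundary counterpart) forces $\phi_{\beta}>0$ throughout $M$. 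This gives the asserted positive smooth solution.

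The existence, regularity and positivity of $\phi_{\beta}$ follow a by-now standard template, so I expect no difficulty there. The crux — and the only place where the hypotheses $\eta_{1}>0$ and $h_{g}>0$ are genuinely used — is the uniform estimate of the second paragraph. Because the perturbation $-a\beta\int_{\partial M}u^{2}\,dS$ is \emph{not} dominated by the $\calL^{2}(M,g)$-norm appearing in the denominator of $Q_{\beta}$ (functions concentrating on $\partial M$ make $\int_{\partial M}u^{2}\,dS/\int_{M}u^{2}\,d\omega$ arbitrarily large), one cannot simply appeal to continuity of the first eigenvalue in $\beta$; instead one must extract from $h_{g}\geq h_{0}>0$ a $\beta$-independent bound on $\int_{\partial M}u^{2}\,dS$ along near-minimisers and then shrink $\beta$ accordingly. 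I expect this bootstrap to be the main — though not severe — obstacle; the remainder is bookkeeping.
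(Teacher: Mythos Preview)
Your proof is correct and complete. The existence, regularity, and positivity portions match the paper's argument essentially verbatim: minimise the Rayleigh quotient over $\{\,u\in H^{1}(M,g):\lVert u\rVert_{\calL^{2}}=1\,\}$, use compactness of the trace map and of the embedding $H^{1}\hookrightarrow\calL^{2}$ to pass to a limit, replace the minimiser by its absolute value, bootstrap via the Robin regularity of Theorem~\ref{pre:thm1}, and conclude strict positivity by the strong maximum principle.

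The genuinely different step is the lower bound $\eta_{1,\beta}>0$. The paper controls the perturbation $\beta\int_{\partial M}u^{2}\,dS$ through the trace inequality $\lVert u\rVert_{\calL^{2}(\partial M)}^{2}\leqslant (K'')^{2}\lVert u\rVert_{H^{1}(M)}^{2}$ of Proposition~\ref{pre:prop3}, reabsorbs the resulting gradient contribution back into the main functional, and uses $h_{g}>0$ only to discard a leftover positive boundary term; this yields the explicit threshold $I_{\beta}[u]\geqslant \eta_{1}-\tfrac{2\beta(K'')^{2}}{a}\bigl(\eta_{1}+a+\sup|R_{g}|\bigr)$. You instead bypass the trace inequality altogether: for $\beta\leqslant h_{0}/(p-2)$ the boundary coefficient $\tfrac{2a}{p-2}h_{g}-a\beta\geqslant\tfrac{ah_{0}}{p-2}>0$ itself provides a $\beta$-independent upper bound on $\int_{\partial M}u^{2}\,dS$ along near-minimisers, which you play off against the lower bound $\int_{\partial M}u^{2}\,dS>\eta_{1}/(2a\beta)$ forced by $Q_{0}[u]\geqslant\eta_{1}$. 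Your route is a bit more elementary (no trace constant enters) and makes transparent why the hypothesis $h_{g}>0$ is essential; the paper's route has the advantage of a single-line closed-form estimate in terms of $K''$. Either gives a perfectly good $C_{\eta_{1}}$.
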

\begin{proof} For any choice of $ \beta $, the existence of the solution (\ref{eigen:eqn1}) is equivalent to the existence of the minimizer of
\begin{equation}\label{eigen:eqn3}
I_{\beta}[u] = \frac{\int_{M} a \lvert \nabla_{g} u \rvert^{2} d\omega + \int_{M} R_{g} u^{2} d\omega + \int_{\partial M} (\frac{2a}{p-2}h_{g} - \beta) u^{2} dS}{\int_{M} u^{2} d\omega}.
\end{equation}
We can discuss the minimizer of $ I[u] $ within the following admissible set
\begin{equation}\label{eigen:eqn4}
\mathcal{A} = \lbrace u \in H^{1}(M, g) : \lVert u \rVert_{\calL^{2}(M, g)} = 1 \rbrace.
\end{equation}
By the same argument as the standard eigenvalue problem, we conclude that (\ref{eigen:eqn2}) admits a solution $ \phi \in H^{1}(M, g) $. The existence part is due to the uniform ellipticity of $ -\Delta_{g} $ and coercivity of $ I_{\beta}[u] $. The facts \cite[Prop.~4.4, 4.5]{T} that the inclusion map $ H^{s + \sigma}(M, g) \hookrightarrow H^{s}(M, g) $ for all $ s \geqslant 0, \sigma > 0 $ is compact and the trace operator $ \tau $ is a bounded linear operator $ \tau: H^{s}(M, g) \rightarrow H^{s - \frac{1}{2}}(\partial M, \imath^{*}g) $ for $ s > \frac{1}{2} $ implies that $ T: H^{1}(M, g) \rightarrow \calL^{2}(\partial M, g) $ defined in Proposition \ref{pre:prop3} is compact. Hence the existence of the minimizer of (\ref{eigen:eqn3}) is proved.

Furthermore, we see that $ \phi \geqslant 0 $ since $ I[u] = I[\lvert u \rvert] $. By elliptic regularity, we conclude that $ \phi \in \calC^{\infty}(M) $. Maximum principles in Theorem \ref{pre:thm3} implies that $ \phi > 0 $ in $ M $ and thus can be extended to $ M $ positively. 
\medskip

We now show that if $ \eta_{1} > 0 $ in (\ref{eigen:eqn1}), there exists some $ C_{\eta_{1}} $ such that (\ref{eigen:eqn2}) admits a positive solution $ \phi $ associated with $ \eta_{1, \beta} $ for all $ \beta \in [0, C_{\eta_{1}}] $. Observe that the first eigenvalue $ \eta_{1} $ and $ \eta_{1, \beta} $ are characterized by
\begin{equation*}
\eta_{1} = I[\varphi] = \inf_{u \in \mathcal{A}} I_{0}[u], \eta_{1, \beta} = I[\phi] = \inf_{u \in \mathcal{A}} I_{\beta}[u]
\end{equation*}
after normalizing both $ \varphi $ and $ \phi $.The first characterization implies that
\begin{equation*}
0 < \eta_{1} \leqslant \int_{M} a \lvert \nabla_{g} u \rvert^{2} d\omega + \int_{M} R_{g} u^{2} d\omega + \int_{\partial M} \frac{2a}{p-2}h_{g} u^{2} dS, \forall u \in \mathcal{A}.
\end{equation*}
Using this and the trace theorem in Proposition \ref{pre:prop3}, the quantity $ I_{\beta}[u] $ for all elements $ u \in \mathcal{A} $ and small enough $ \beta $ satisfies
\begin{align*}
I_{\beta}[u] & = \int_{M} a \lvert \nabla_{g} u \rvert^{2} d\omega + \int_{M} R_{g} u^{2} d\omega + \int_{\partial M} (\frac{2a}{p-2}h_{g} - \beta) u^{2} dS \\
& \geqslant \int_{M} a \lvert \nabla_{g} u \rvert^{2} d\omega + \int_{M} R_{g} u^{2} d\omega + \int_{\partial M} \frac{2a}{p-2}h_{g}  u^{2} dS - \beta \lVert u \rVert_{\calL^{2}(\partial M, \imath^{*} g)}^{2} \\
& \geqslant  \int_{M} a \lvert \nabla_{g} u \rvert^{2} d\omega + \int_{M} R_{g} u^{2} d\omega + \int_{\partial M} \frac{2a}{p-2}h_{g}  u^{2} dS - \beta \left(K'' \right)^{2} \lVert u \rVert_{H^{1}(M, g)}^{2} \\
& \geqslant \left(a - 2\beta \left(K'' \right)^{2} \right) \lVert \nabla_{g} u \rVert_{\calL^{2}(M, g)}^{2} +  \frac{\left(a - 2\beta \left(K'' \right)^{2} \right)}{a} \left( \int_{M} R_{g} u^{2} d\omega + \int_{\partial M} \frac{2a}{p-2} h_{g} u^{2} dS \right) \\
& \qquad + \frac{2\beta \left(K'' \right)^{2}}{a} \int_{M} R_{g} u^{2} d\omega + \frac{2\beta \left(K'' \right)^{2}}{a} \int_{\partial M} \frac{2a}{p-2} h_{g} u^{2} dS - 2\beta \left(K'' \right)^{2} \\
& \geqslant \frac{\left(a - \beta \left(K'' \right)^{2} \right)}{a} \left( a \lVert \nabla_{g} u \rVert_{\calL^{2}(M, g)}^{2} + \int_{M} R_{g} u^{2} d\omega + \int_{\partial M} \frac{2a}{p-2} h_{g} u^{2} dS  \right) \\
& \qquad - \frac{2\beta \left(K'' \right)^{2}}{a} \sup \lvert R_{g} \rvert \int_{M} u^{2} d\omega - 2\beta \left(K'' \right)^{2} \\
& = \frac{\left(a - 2\beta \left(K'' \right)^{2} \right)}{a} \eta_{1} - \frac{2\beta \left(K'' \right)^{2}}{a} \left( a + \sup \lvert R_{g} \rvert \right) = \eta_{1} - \frac{2\beta \left(K'' \right)^{2}}{a} \left( \eta_{1} + a + \sup \lvert R_{g} \rvert \right).
\end{align*}
Here we use (i) $ a - 2\beta \left(K'' \right)^{2} > 0 $ when $ \beta $ small enough; and (ii) $ h_{g} > 0 $ hence $ \frac{2\beta \left(K'' \right)^{2}}{a} \int_{\partial M} \frac{2}{p-2} h_{g} u^{2} dS > 0 $. We conclude from above derivation that
\begin{equation*}
\beta \ll 1 \Rightarrow I_{\beta}[u] > 0 \Rightarrow \eta_{1, \beta} = \inf_{u \in \mathcal{A}} I_{\beta}[u] > 0.
\end{equation*}
Take $ C_{\eta_{1}} $ to be the largest $ \beta $ such that above holds and it completes the proof.
\end{proof}
\medskip

\section{Monotone Iteration Method for Boundary Yamabe Problem}
With the tools equipped above, we show in this section the existence of a $ W^{2, q} $-solution of Yamabe equation (\ref{intro:eqn1}), provided the existence of sub-solution and super-solution of Yamabe equation when $ h_{g} > 0 $ everywhere on $ \partial M $.
\begin{theorem}\label{iteration:thm1}
Let $ (M, \partial M, g) $ be a compact manifold with non-empty smooth boundary. Let $ \nu $ be the unit outward normal vector along $ \partial M $ and $ q > \dim M $. Let $ h_{g} > 0 $ everywhere on $ \partial M $. Suppose that there exist $ u_{-} \in \calC_{0}(M) \cap H^{1}(M, g) $ and $ u_{+} \in W^{2, q}(M, g) \cap \calC_{0}(M) $, $ 0 \leqslant u_{-} \leqslant u_{+} $, $ u_{-} \not\equiv 0 $ on $ M $, some constant $ \lambda \neq 0 $ and some small enough positive constant $ \zeta > 0 $ such that
\begin{equation}\label{iteration:eqn1}
\begin{split}
-a\Delta_{g} u_{-} + R_{g} u_{-} - \lambda u_{-}^{p-1} & \leqslant 0 \; {\rm in} \; M, \frac{\partial u_{-}}{\partial \nu} + \frac{2}{p-2} h_{g} u_{-} = 0 \leqslant \frac{2}{p-2} \zeta u_{-}^{\frac{p}{2}} \; {\rm on} \; \partial M \\
-a\Delta_{g} u_{+} + R_{g} u_{+} - \lambda u_{+}^{p-1} & \geqslant 0 \; {\rm in} \; M, \frac{\partial u_{+}}{\partial \nu} + \frac{2}{p-2} h_{g} u_{+} \geqslant \frac{2}{p-2} \zeta u_{+}^{\frac{p}{2}} \; {\rm on} \; \partial M
\end{split}
\end{equation}
holds weakly. Then there exists a real, positive solution $ u \in \calC^{\infty}(M) \cap \calC^{1, \alpha}(M) $ of
\begin{equation}\label{iteration:eqn2}
\Box_{g} u = -a\Delta_{g} u + R_{g} u = \lambda u^{p-1}  \; {\rm in} \; M, B_{g} u =  \frac{\partial u}{\partial \nu} + \frac{2}{p-2} h_{g} u = \frac{2}{p-2} \zeta u^{\frac{p}{2}} \; {\rm on} \; \partial M.
\end{equation}
\end{theorem}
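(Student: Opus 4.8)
The plan is to solve (\ref{iteration:eqn2}) by a monotone iteration tailored to the nonlinear boundary term: after a single additive shift $ \mu $ of the zeroth-order coefficient the interior nonlinearity $ t\mapsto\lambda t^{p-1}+\mu t $ becomes nondecreasing on the relevant interval, the boundary nonlinearity $ t\mapsto\frac{2}{p-2}\zeta t^{p/2} $ is already nondecreasing since $ \zeta>0 $ and $ p/2>1 $, and, crucially, $ h_g>0 $ makes the shifted linear Robin problem uniquely solvable with a valid maximum principle. Set $ m:=\sup_M u_+<\infty $, fix $ \mu>0 $ so large that $ R_g+\mu>0 $ on $ M $ and that $ t\mapsto\lambda t^{p-1}+\mu t $ is nondecreasing on $ [0,m] $ (possible since its derivative is bounded below there), and put $ c(x):=\frac{2}{p-2}h_g(x)>0 $ on $ \partial M $. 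For $ w\in W^{2,q}(M,g) $ with $ 0\leqslant w\leqslant m $ define $ \Phi(w) $ to be the weak solution of
\begin{equation*}
-a\Delta_g v+(R_g+\mu)v=\lambda w^{p-1}+\mu w \; {\rm in} \; M, \quad \frac{\partial v}{\partial\nu}+c(x)v=\frac{2}{p-2}\zeta w^{p/2} \; {\rm on} \; \partial M .
\end{equation*}
Unique weak solvability in $ H^1(M,g) $ follows from Lax--Milgram exactly as in Theorem \ref{pre:thm4}, with the constant $ A $ there replaced by the positive function $ R_g+\mu $ so that the bilinear form is still coercive; the interior datum lies in $ \calL^q(M,g) $ and the boundary datum has a $ W^{1,q}(M,g) $ extension by the trace and extension theorems, so a standard elliptic bootstrap via Theorems \ref{pre:thm1}, \ref{pre:thm2} and Proposition \ref{pre:prop2} (here $ q>\dim M $) gives $ \Phi(w)\in W^{2,q}(M,g)\subset\calC^{1,\alpha}(M) $; and since both data are nonnegative with $ R_g+\mu>0 $, $ c>0 $, the maximum principle for the Robin problem gives $ \Phi(w)\geqslant0 $. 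Finally $ \Phi $ is order-preserving: if $ w_1\leqslant w_2 $ then $ \Phi(w_2)-\Phi(w_1) $ satisfies $ -a\Delta_g(\cdot)+(R_g+\mu)(\cdot)\geqslant0 $ in $ M $ and $ \partial_\nu(\cdot)+c(\cdot)\geqslant0 $ on $ \partial M $, hence is $ \geqslant0 $ by the same maximum principle (Theorem \ref{pre:thm3} in the interior together with Hopf's lemma at the boundary, using $ R_g+\mu>0 $ and $ c>0 $).

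\emph{The monotone sequence.} Put $ u_0:=u_+ $ and $ u_{k+1}:=\Phi(u_k) $. Adding $ \mu u_+ $ to the interior super-solution inequality in (\ref{iteration:eqn1}) and comparing with the equation for $ u_1 $ gives $ u_1\leqslant u_+ $ by the Robin maximum principle; comparing the equation for $ u_1 $ with the sub-solution inequalities for $ u_- $, using monotonicity of the two nonlinearities and $ u_-\leqslant u_+ $, gives $ u_1\geqslant u_- $, where it is essential that the homogeneous Robin condition on $ u_- $ forces $ \partial_\nu u_-+c\,u_-=0\leqslant\frac{2}{p-2}\zeta u_-^{p/2} $ on $ \partial M $. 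Since $ \Phi $ is order-preserving, induction yields
\begin{equation*}
0\leqslant u_-\leqslant\dotsb\leqslant u_{k+1}\leqslant u_k\leqslant\dotsb\leqslant u_+ \quad {\rm on} \; M ,
\end{equation*}
so $ u:=\lim_{k\to\infty}u_k $ exists pointwise with $ u_-\leqslant u\leqslant u_+ $, and by dominated convergence $ u_k\to u $ in $ \calL^r(M,g) $ for every $ r<\infty $.

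\emph{Passing to the limit and regularity.} Testing the weak equation for $ u_{k+1} $ against $ u_{k+1} $, and using $ 0\leqslant u_k\leqslant m $, the trace inequality of Proposition \ref{pre:prop3} and coercivity, yields $ \lVert u_k\rVert_{H^1(M,g)}\leqslant C $ uniformly in $ k $; hence $ u_k\rightharpoonup u $ weakly in $ H^1(M,g) $ and $ u_k|_{\partial M}\to u|_{\partial M} $ strongly in $ \calL^2(\partial M,\imath^*g) $ by compactness of the trace map (as in the proof of Theorem \ref{eigen:thm1}). Passing to the limit in
\begin{equation*}
\int_M\left(a\nabla_g u_{k+1}\cdot\nabla_g v+(R_g+\mu)u_{k+1}v\right)d\omega+\int_{\partial M}c\,u_{k+1}v\,dS=\int_M(\lambda u_k^{p-1}+\mu u_k)v\,d\omega+\int_{\partial M}\frac{2}{p-2}\zeta u_k^{p/2}v\,dS
\end{equation*}
for fixed $ v\in H^1(M,g) $ shows that $ u $ is a weak solution of (\ref{iteration:eqn2}). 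Since $ u\geqslant u_-\not\equiv0 $ solves $ -a\Delta_g u+(R_g+\mu)u=\lambda u^{p-1}+\mu u\geqslant0 $ with $ R_g+\mu>0 $, the strong maximum principle gives $ u>0 $ in the interior, and Hopf's lemma at a hypothetical boundary zero contradicts the boundary condition (because $ h_g $ and $ \zeta $ enter there with a definite sign), so $ u>0 $ on $ M $. With $ u>0 $ the map $ t\mapsto t^{p/2} $ is smooth on the range of $ u $, so a standard bootstrap—elliptic regularity (Theorems \ref{pre:thm1}, \ref{pre:thm2}) and the Schauder estimates of \cite{Niren4}, the trace and extension theorems, and Sobolev embedding (Proposition \ref{pre:prop2})—promotes $ u $ to $ \calC^\infty $ in the interior and $ \calC^{1,\alpha}(M) $ up to $ \partial M $, completing the proof.

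\emph{Main obstacle.} The crux, and what distinguishes this from the minimal-boundary scheme of \cite{XU4}, is the nonlinear boundary term $ \frac{2}{p-2}\zeta u^{p/2} $: one must \emph{simultaneously} make the shifted interior operator invertible with a working maximum principle (interior plus Hopf) and both nonlinearities nondecreasing on $ [0,\sup u_+] $; keep every iterate in a space ($ W^{2,q}\subset\calC^{1,\alpha} $) in which the boundary composition $ w\mapsto w^{p/2} $ is controlled, so the iteration is well-defined and the final bootstrap closes; and pass the boundary integral $ \int_{\partial M}\frac{2}{p-2}\zeta u_k^{p/2}v\,dS $ to the limit, for which compactness of the trace $ H^1(M,g)\to\calL^2(\partial M,\imath^*g) $ is the decisive input. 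The hypotheses $ h_g>0 $ (giving $ c>0 $, hence invertibility and the Robin maximum principle) and $ \zeta>0 $ (giving monotonicity of the boundary nonlinearity and the correct sign in the Hopf step) are precisely what make these three points go through.
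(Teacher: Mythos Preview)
Your iteration is the same monotone scheme as the paper's: shift the zeroth-order term so that the interior nonlinearity becomes monotone on $[0,\sup u_+]$, freeze the nonlinearities at the previous iterate, and descend from $u_+$. The substantive difference is in how you pass to the limit. The paper closes a recursive $W^{2,q}$ estimate on the iterates---this is precisely where the ``small enough $\zeta$'' hypothesis is spent, via inequalities that force $\|\tfrac{2}{p-2}\zeta\,u_k^{p/2}\|_{W^{1,q}}\leqslant 1$ uniformly in $k$---and then extracts a $C^{1,\alpha}$ limit by Arzela--Ascoli. You instead obtain only a uniform $H^1$ bound (by testing the $k$th equation against $u_{k+1}$ and using the uniform $L^\infty$ bound), pass to the limit weakly, and bootstrap regularity afterwards. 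Your route is more economical and, notably, does not use the smallness of $\zeta$ at all beyond what is needed for the sub-/super-solution hypotheses; the paper's Remark after the proof explicitly records that $\zeta$ must be shrunk a second time inside the argument, which your approach avoids.

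Two places where your write-up is looser than it should be. First, the comparison $u_-\leqslant u_1$ involves $u_-\in H^1\cap\calC_0$ only, so the classical Hopf lemma is not available; you need the weak Robin maximum principle, and the clean way to run it here is exactly the paper's device of pairing with $w=\max\{0,u_--u_1\}$ and using $h_g>0$ to force $w\equiv0$. Second, your ``standard bootstrap'' from the weak $H^1$ limit to $\calC^{1,\alpha}$ deserves one explicit step: from $u\in H^1\cap L^\infty$ one has $\nabla(u^{p/2})=\tfrac{p}{2}u^{p/2-1}\nabla u\in L^2$ (since $p/2-1>0$), so $u^{p/2}\in H^1$ and Theorem \ref{pre:thm1} with exponent $2$ gives $u\in W^{2,2}$; iterating the Sobolev step $q\mapsto nq/(n-q)$ then reaches $q>\dim M$ in finitely many rounds, after which Hopf and the remaining bootstrap are legitimate. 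With these two clarifications your argument is complete.
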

\begin{proof} Fix some $ q > \dim M $. Denote $ u_{0} = u_{+} $. Choose a constant $ A > 0 $ such that
\begin{equation}\label{iteration:eqn2a}
-R_{g}(x) + \lambda (p - 1) u(x)^{p-2} + A > 0, \forall u(x) \in [\min_{M} u_{-}(x), \max_{M} u_{+}(x) ], \forall x \in M
\end{equation}
pointwise. For the first step, consider the linear PDE
\begin{equation}\label{iteration:eqn3}
-a\Delta_{g} u_{1} + Au_{1} = Au_{0} - R_{g} u_{0} + \lambda u_{0}^{p-1}  \; {\rm in} \; M, \frac{\partial u_{1}}{\partial \nu} + \frac{2}{p-2} h_{g} u_{1} = \frac{2}{p-2} \zeta u_{0}^{\frac{p}{2}} \; {\rm on} \; \partial M.
\end{equation}
Since $ u_{0} = u_{+} \in W^{2, q}(M, g) \cap \calC_{0}(M) $, by Theorem \ref{pre:thm4}, there exists a unique solution $ u_{1} \in H^{1}(M, g) $. Since $ u_{0} \in W^{2, q}(M, g) \cap \calC_{0}(M) $, and thus $ u_{0} \in \calC^{1, \alpha}(M) \cap \calL^{q}(M, g) $ for all $ 1 < q < \infty $, it follows from $ \calL^{p}$-regularity in Theorem \ref{pre:thm1} that $ u_{1} \in W^{2, q}(M, g) $. By Sobolev embedding in Proposition \ref{pre:prop2}, it follows that $ u_{1} \in \calC^{1, \alpha}(M) $ for some $ \alpha \in (0, 1) $. We show that $ u_{1} \leqslant u_{0} = u_{+} $.
Subtracting the second equation in (\ref{iteration:eqn1}) and (\ref{iteration:eqn3}), we have
\begin{equation*}
\left( -a\Delta_{g} + A \right) (u_{0} - u_{1}) \geqslant 0 \; {\rm in} \; M, B_{g} (u_{0} - u_{1}) \geqslant 0 \; {\rm on} \; \partial M.
\end{equation*}
in the weak sense. Denote
\begin{equation*}
w = \max \lbrace 0, u_{1} - u_{0} \rbrace.
\end{equation*}
It is immediate that $ w \in H^{1}(M, g) \cap \calC_{0}(M) $ and $ w \geqslant 0 $. It follows that
\begin{align*}
0 & \geqslant \int_{M} \left( a \nabla_{g} (u_{1} - u_{0}) \cdot \nabla_{g} w + A(u_{1} - u_{0}) w \right) d\omega + \int_{\partial M} \frac{2}{p-2} h_{g} (u_{1} - u_{0}) w dS \\
& = \int_{M} \left( a \lvert \nabla_{g} w \rvert^{2} + A w^{2} \right) d\omega + \int_{\partial M} \frac{2}{p-2} h_{g} w^{2} dS \geqslant 0.
\end{align*}
The last inequality holds since $ A > 0 $ and $ h_{g} > 0 $ everywhere on $ \partial M $. It follows that
\begin{equation*}
w \equiv 0 \Rightarrow 0 \geqslant u_{1} - u_{0} \Rightarrow u_{0} \geqslant u_{1}.
\end{equation*}
By the same argument, we can show that $ u_{1} \geqslant u_{-} $ and hence $ u_{-} \leqslant u_{1} \leqslant u_{+} $. Assume inductively that $ u_{-} \leqslant \dotso \leqslant u_{k-1} \leqslant u_{k} \leqslant u_{+} $ for some $ k > 1 $ with $ u_{k} \in W^{2, q}(M, g) $ , the $ (k + 1)th $ iteration step is
\begin{equation}\label{iteration:eqn4}
-a\Delta_{g} u_{k+1} + Au_{k+1} = Au_{k} - R_{g} u_{k} + \lambda u_{k}^{p-1}  \; {\rm in} \; M, \frac{\partial u_{k+1}}{\partial \nu} + \frac{2}{p-2} h_{g} u_{k+1} = \frac{2}{p-2} \zeta u_{k}^{\frac{p}{2}} \; {\rm on} \; \partial M.
\end{equation}
Since $ u_{k} \in W^{2, q}(M, g) $ thus $ u_{k} \in \calC^{1, \alpha}(M) $ due to Sobolev embedding in Proposition \ref{pre:prop2}, by Theorem \ref{pre:thm1} and \ref{pre:thm4} again, we conclude that there exists $ u_{k+1} \in W^{2, q}(M, g) $ that solves (\ref{iteration:eqn4}). In particular, $ u_{k}^{\frac{p}{2}} = u_{k}^{\frac{n}{n-2}} \in \calC^{1}(M) $ hence the hypothesis of the boundary condition in Theorem \ref{pre:thm1} and \ref{pre:thm4} are satisfied. We show that $ u_{-} \leqslant u_{k + 1} \leqslant u_{k} \leqslant u_{+} $. The $ kth $ iteration step 
\begin{equation}\label{iteration:eqn5}
-a\Delta_{g} u_{k} + Au_{k} = Au_{k - 1} - R_{g} u_{k - 1} + \lambda u_{k - 1}^{p-1}  \; {\rm in} \; M, \frac{\partial u_{k}}{\partial \nu} + \frac{2}{p-2} h_{g} u_{k} = \frac{2}{p-2} \zeta u_{k - 1}^{\frac{p}{2}} \; {\rm on} \; \partial M.
\end{equation}
Subtracting (\ref{iteration:eqn4}) by (\ref{iteration:eqn5}), we conclude that
\begin{align*}
& \left( -a\Delta_{g} + A \right) \left( u_{k + 1} - u_{k} \right) = A(u_{k} - u_{k - 1}) - R_{g} (u_{k} - u_{k - 1}) + \lambda \left( u_{k}^{p-1} - u_{k - 1}^{p-1} \right) \leqslant 0 \; {\rm in} \; M; \\
& \frac{\partial \left(u_{k+1} - u_{k}\right)}{\partial \nu} + \frac{2}{p-2} h_{g} (u_{k+ 1} - u_{k}) = \frac{2}{p-2} \zeta u_{k}^{\frac{p}{2}}  - \frac{2}{p-2} \zeta u_{k - 1}^{\frac{p}{2}} \leqslant 0 \; {\rm on} \; \partial M.
\end{align*}
By induction we have $ u_{-} \leqslant u_{k} \leqslant u_{k - 1} \leqslant u_{+} $. The first inequality above is then due to the pointwise mean value theorem and the choice of $ A $ in (\ref{iteration:eqn2a}). The second inequality is immediate. Note that since both $ u_{k}, u_{k-1} \in W^{2, q}(M, g) $, above inequalities hold in strong sense. We choose
\begin{equation*}
\tilde{w} = \max \lbrace 0, u_{k+1} - u_{k} \rbrace.
\end{equation*}
Clearly $ \tilde{w} \geqslant 0 $ with $ \tilde{w} \in H^{1}(M, g) \cap \calC_{0}(M) $. Pairing $ \tilde{w} $ with $ \left( -a\Delta_{g} + A \right) \left( u_{k} - u_{k + 1} \right) \leqslant 0 $, we have
\begin{align*}
0 & \geqslant \int_{M} \left( -a\Delta_{g} + A \right) \left( u_{k+1} - u_{k} \right)  \tilde{w} d\omega = \int_{M} a \nabla_{g} (u_{k+1} - u_{k}) \cdot \nabla_{g} \tilde{w} d\omega - \int_{\partial M} \frac{\partial \left(u_{k+1} - u_{k}\right)}{\partial \nu} \tilde{w} dS \\
& \geqslant  \int_{M} a \nabla_{g} (u_{k+1} - u_{k}) \cdot \nabla_{g} \tilde{w} d\omega + \int_{\partial M} \frac{2a}{p -2} h_{g} (u_{k+1} - u_{k}) \tilde{w} dS \\
& = a \lVert \nabla_{g} \tilde{w} \rVert_{\calL^{2}(M, g)}^{2} + \frac{2a}{p-2} \int_{\partial M} h_{g} \tilde{w}^{2} dS \geqslant 0.
\end{align*}
It follows that
\begin{equation*}
\tilde{w} = 0 \Rightarrow 0 \geqslant u_{k + 1} - u_{k} \Rightarrow u_{k + 1} \leqslant u_{k}.
\end{equation*}
By the same argument and the induction $ u_{k} \geqslant u_{-} $, we conclude that $ u_{k+1} \geqslant u_{-} $. Thus
\begin{equation}\label{iteration:eqn6}
0 \leqslant u_{-} \leqslant u_{k+1} \leqslant u_{k} \leqslant u_{+}, u_{k} \in W^{2, q}(M, g), \forall k \in \mathbb{N}.
\end{equation}
By Theorem \ref{pre:thm4}, the operator $ -a\Delta_{g} + A $ is injective. Applying $ L^{p} $-regularity in Theorem \ref{pre:thm2}, we conclude from the first iteration step (\ref{iteration:eqn3}) that
\begin{equation}\label{iteration:eqn7}
\lVert u_{1} \rVert_{W^{2, q}(M, g)} \leqslant C' \left( \left\lVert Au_{0} - R_{g} u_{0} + \lambda u_{0}^{p-1} \right\rVert_{\calL^{q}(M, g)} + \left\lVert \frac{2}{p-2} \zeta u_{0}^{\frac{p}{2}} \right\rVert_{W^{1, q}(M, g)} \right).
\end{equation}
Choose $ \zeta > 0 $ small enough so that
\begin{equation}\label{iteration:eqn8}
\begin{split}
& \zeta \cdot \frac{2}{p-2} \cdot \left\lVert u_{0}^{\frac{p}{2}} \right\rVert_{W^{1, q}(M, g)} \leqslant 1; \\
& \zeta \cdot \frac{2}{p-2} \sup_{M} \left( u_{0}^{\frac{p}{2}} \right) \cdot Vol_{g}(M) \\
& \qquad +\zeta \cdot \frac{2}{p-2} \cdot \frac{p}{2} \sup_{M} \left( u_{0}^{\frac{p - 2}{2}} \right) C' \left( \left( A  + \sup_{M} \lvert R_{g} \rvert + \lvert \lambda \rvert \sup_{M} \left( u_{0}^{p-2} \right) \right) \sup_{M} \left( u_{0} \right)\cdot \text{Vol}_{g}(M) + 1 \right) \leqslant 1.
\end{split}
 \end{equation}
Note that for smaller $ \zeta $, the sub-solution and super-solution in (\ref{iteration:eqn1}) still hold. Due to (\ref{iteration:eqn8}), we conclude that
\begin{align*}
\lVert u_{1} \rVert_{W^{2, q}(M, g)} & \leqslant C' \left( \left\lVert Au_{0} - R_{g} u_{0} + \lambda u_{0}^{p-1} \right\rVert_{\calL^{q}(M, g)} + 1 \right) \\
& \leqslant C' \left( \left( A  + \sup_{M} \lvert R_{g} \rvert + \lvert \lambda \rvert \sup_{M} \left( u_{0}^{p-2} \right) \right) \sup_{M} \left( u_{0} \right) \cdot \text{Vol}_{g}(M) + 1 \right).
\end{align*}
Inductively, we assume
\begin{equation}\label{iteration:eqn9}
\lVert u_{k} \rVert_{W^{2, q}(M, g)} \leqslant C' \left( \left( A  + \sup_{M} \lvert R_{g} \rvert + \lvert \lambda \rvert \sup_{M} \left( u_{0}^{p-2} \right) \right) \sup_{M} \left( u_{0} \right) \cdot \text{Vol}_{g}(M) + 1 \right).
\end{equation}
For $ u_{k + 1} $, we conclude from (\ref{iteration:eqn4}) that
\begin{equation}\label{iteration:eqn10}
\lVert u_{k+1} \rVert_{W^{2, q}(M, g)} \leqslant C' \left( \left\lVert Au_{k} - R_{g} u_{k} + \lambda u_{k}^{p-1} \right\rVert_{\calL^{q}(M, g)} + \left\lVert \frac{2}{p-2} \zeta u_{k}^{\frac{p}{2}} \right\rVert_{W^{1, q}(M, g)} \right).
\end{equation}
The last term in (\ref{iteration:eqn10}) can be estimated as
\begin{align*}
\left\lVert \frac{2}{p-2} \zeta u_{k}^{\frac{p}{2}} \right\rVert_{W^{1, q}(M, g)} & = \frac{2}{p-2} \zeta \left( \left\lVert u_{k}^{\frac{p}{2}} \right\rVert_{\calL^{q}(M, g)} + \left\lVert \nabla_{g} \left( u_{k}^{\frac{p}{2}} \right) \right\rVert_{\calL^{q}(M, g)} \right) \\
& \leqslant  \frac{2}{p-2} \zeta \left( \sup_{M} \left( u_{k}^{\frac{p}{2}} \right) \cdot Vol_{g}(M) +\frac{p}{2} \sup_{M} \left( u_{k}^{\frac{p - 2}{2}} \right) \left\lVert \nabla_{g} u_{k} \right\rVert_{\calL^{q}(M, g)} \right) \\
& \leqslant  \frac{2}{p-2} \zeta \left( \sup_{M} \left( u_{0}^{\frac{p}{2}} \right) \cdot Vol_{g}(M) +\frac{p}{2} \sup_{M} \left( u_{0}^{\frac{p - 2}{2}} \right) \lVert u_{k}  \rVert_{W^{2, q}(M, g)} \right).
\end{align*}
By the choice of $ \zeta $ in (\ref{iteration:eqn8}) and induction assumption in (\ref{iteration:eqn9}), we conclude that
\begin{equation*}
\left\lVert \frac{2}{p-2} \zeta u_{k}^{\frac{p}{2}} \right\rVert_{W^{1, q}(M, g)} \leqslant 1.
\end{equation*}
It follows from (\ref{iteration:eqn10}) that
\begin{equation}\label{iteration:eqn11}
\begin{split}
\lVert u_{k+1} \rVert_{W^{2, q}(M, g)} & \leqslant C' \left( \left\lVert Au_{k} - R_{g} u_{k} + \lambda u_{k}^{p-1} \right\rVert_{\calL^{q}(M, g)} + \left\lVert \frac{2}{p-2} \zeta u_{k}^{\frac{p}{2}} \right\rVert_{W^{1, q}(M, g)} \right) \\
& \leqslant C' \left( \left( A  + \sup_{M} \lvert R_{g} \rvert + \lvert \lambda \rvert \sup_{M} \left( u_{0}^{p-2} \right) \right) \sup_{M} \left( u_{0} \right) \cdot \text{Vol}_{g}(M) + 1 \right).
\end{split}
\end{equation}
It follows that the sequence $ \lbrace u_{k} \rbrace_{k \in \mathbb{N}} $ is uniformly bounded in $ W^{2, q} $-norm. By Sobolev embedding in Proposition \ref{pre:prop2}, we conclude that the same sequence is uniformly bounded in $ \calC^{1} $-norm. Thus by Arzela-Ascoli theorem, we conclude that there exists $ u $ such that
\begin{equation*}
u = \lim_{k \rightarrow \infty} u_{k}, 0 \leqslant u_{-} \leqslant u \leqslant u_{+}, \Box_{g} u = \lambda u^{p-1}  \; {\rm in} \; M, B_{g} u = \frac{2}{p-2} \zeta u^{\frac{p}{2}} \; {\rm on} \; \partial M.
\end{equation*}
Apply elliptic regularity, we conclude that $ u \in W^{2, q}(M, g) $. A standard bootstrapping argument concludes that $ u \in \calC^{\infty}(M) \cap \calC^{1, \alpha}(M) $, due to Schauder estimates. The regularity of $ u $ on $ \partial M $ is determined by $ u^{p-1} $.

Lastly we show that $ u $ is positive. Since $ u \in \calC^{\infty}(M) $ it is smooth locally, the local strong maximum principle says that if $ u = 0 $ in some interior domain $ \Omega $ then $ u \equiv 0 $ on $ \Omega $, a continuation argument then shows that $ u \equiv 0 $ in $ M $. But $ u \geqslant u_{-} $ and $ u_{-} > 0 $ within some region. Thus $ u > 0 $ in the interior $ M $. By the same argument in \cite[\S1]{ESC}, we conclude that $ u > 0 $ on $ M $.
\end{proof}
\begin{remark}\label{iteration:re1}
The choice of $ \zeta $ in the proof of Theorem \ref{iteration:thm1} is uniform for all $ k $. We may need to choice $ \zeta $ small enough a priori to construct the super-solution in (\ref{iteration:eqn1}). Therefore we need to shrink $ \zeta $ totally two times. It is worth mentioning that the choice of sub-solution $ u_{-} $ in Theorem \ref{iteration:thm1} is very special, requiring a homogeneous Robin condition on $ \partial M $, hence the boundary condition in (\ref{iteration:eqn1}) holds for arbitrarily small $ \zeta > 0 $.
\end{remark}
\medskip

As shown in Theorem 4.3 of \cite{XU3} and Theorem 5.5 of \cite{XU4}, we need a perturbed boundary Yamabe equation
\begin{equation*}
-a\Delta_{g} u + \left(R_{g} + \tau \right) u = \lambda u^{p-1} \; {\rm in} \; M, \frac{\partial u}{\partial \nu} + \frac{2}{p-2} h_{g} u = \frac{2}{p-2} \zeta u^{\frac{p}{2}} \; {\rm on} \; \partial M
\end{equation*}
when $ \eta_{1} > 0 $. The existence of the positive solution can be shown by the same monotone iteration scheme as above, provided the existence of the sub- and super-solutions.
\begin{corollary}\label{iteration:cor1}
Let $ (M, \partial M, g) $ be a compact manifold with non-empty smooth boundary. Let $ \nu $ be the unit outward normal vector along $ \partial M $ and $ q > \dim M $. Let $ h_{g} > 0 $ everywhere on $ \partial M $. Let $ \tau < 0 $ be a negative constant. Suppose that there exist $ u_{-} \in \calC_{0}(M) \cap H^{1}(M, g) $ and $ u_{+} \in W^{2, q}(M, g) \cap \calC_{0}(M) $, $ 0 \leqslant u_{-} \leqslant u_{+} $, $ u_{-} \not\equiv 0 $ on $ M $, some constant $ \lambda \neq 0 $ and some small enough positive constant $ \zeta > 0 $ such that
\begin{equation}\label{iteration:eqn12}
\begin{split}
-a\Delta_{g} u_{-} + \left(R_{g} + \tau \right) u_{-} - \lambda u_{-}^{p-1} & \leqslant 0 \; {\rm in} \; M, \frac{\partial u_{-}}{\partial \nu} + \frac{2}{p-2} h_{g} u_{-} = 0 \leqslant \frac{2}{p-2} \zeta u_{-}^{\frac{p}{2}} \; {\rm on} \; \partial M \\
-a\Delta_{g} u_{+} + \left( R_{g} + \tau \right) u_{+} - \lambda u_{+}^{p-1} & \geqslant 0 \; {\rm in} \; M, \frac{\partial u_{+}}{\partial \nu} + \frac{2}{p-2} h_{g} u_{+} \geqslant \frac{2}{p-2} \zeta u_{+}^{\frac{p}{2}} \; {\rm on} \; \partial M
\end{split}
\end{equation}
holds weakly. Then there exists a real, positive solution $ u \in \calC^{\infty}(M) \cap \calC^{1, \alpha}(M) $ of
\begin{equation}\label{iteration:eqn13}
\Box_{g, \tau} u : = -a\Delta_{g} u + \left( R_{g} + \tau \right) u = \lambda u^{p-1}  \; {\rm in} \; M, B_{g} u =  \frac{\partial u}{\partial \nu} + \frac{2}{p-2} h_{g} u = \frac{2}{p-2} \zeta u^{\frac{p}{2}} \; {\rm on} \; \partial M.
\end{equation}
\end{corollary}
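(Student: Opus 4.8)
The plan is to run the monotone iteration of Theorem \ref{iteration:thm1} essentially verbatim, replacing the conformal Laplacian $ \Box_{g} $ by the perturbed operator $ \Box_{g, \tau} = -a\Delta_{g} + (R_{g} + \tau) $ throughout. Since $ \tau < 0 $ is a fixed constant, every quantity in that argument that depended on $ R_{g} $ is affected only through the harmless replacement $ R_{g} \mapsto R_{g} + \tau $, and in particular $ \sup_{M} |R_{g}| $ is merely replaced by $ \sup_{M} |R_{g}| + |\tau| $.

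First I would fix $ q > \dim M $, set $ u_{0} = u_{+} $, and pick $ A > 0 $ so large that $ -(R_{g}(x) + \tau) + \lambda (p-1) u(x)^{p-2} + A > 0 $ for all $ x \in M $ and all $ u(x) \in [\min_{M} u_{-}, \max_{M} u_{+}] $, exactly as in (\ref{iteration:eqn2a}). The $ (k+1) $-st step solves the linear Robin problem $ -a\Delta_{g} u_{k+1} + A u_{k+1} = A u_{k} - (R_{g} + \tau) u_{k} + \lambda u_{k}^{p-1} $ in $ M $ with $ \frac{\partial u_{k+1}}{\partial \nu} + \frac{2}{p-2} h_{g} u_{k+1} = \frac{2}{p-2} \zeta u_{k}^{p/2} $ on $ \partial M $. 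Since $ A > 0 $ and $ c = \frac{2}{p-2} h_{g} > 0 $ on $ \partial M $, Theorem \ref{pre:thm4} provides a unique weak solution and the injectivity of $ -a\Delta_{g} + A $ under the homogeneous Robin condition, while Theorem \ref{pre:thm1} upgrades $ u_{k+1} $ to $ W^{2, q}(M, g) $, hence to $ \calC^{1, \alpha}(M) $, so that the boundary datum $ u_{k+1}^{p/2} $ of the next step is again admissible. The comparison $ u_{-} \leqslant u_{k+1} \leqslant u_{k} \leqslant u_{+} $ is obtained as before: subtracting consecutive steps and using the pointwise mean value theorem together with the choice of $ A $ gives $ (-a\Delta_{g} + A)(u_{k+1} - u_{k}) \leqslant 0 $ in $ M $ and the matching boundary inequality, and pairing with $ \tilde w = \max\{0, u_{k+1} - u_{k}\} $ forces $ \tilde w \equiv 0 $ because $ A > 0 $ and $ h_{g} > 0 $; the bounds against $ u_{-} $ and the initial comparison with $ u_{0} $ via (\ref{iteration:eqn12}) are identical.

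Finally I would reproduce the uniform $ W^{2, q} $-estimate of (\ref{iteration:eqn7})--(\ref{iteration:eqn11}) and then invoke Arzela-Ascoli together with elliptic bootstrapping to extract $ u = \lim_{k} u_{k} \in \calC^{\infty}(M) \cap \calC^{1, \alpha}(M) $ solving (\ref{iteration:eqn13}) with $ 0 \leqslant u_{-} \leqslant u \leqslant u_{+} $; positivity then follows from $ u \geqslant u_{-} $, the strong maximum principle, and the boundary argument of \cite[\S1]{ESC}. The only delicate point, exactly as in Theorem \ref{iteration:thm1} and Remark \ref{iteration:re1}, is the two-stage smallness of $ \zeta $ — small enough first so that the super-solution in (\ref{iteration:eqn12}) survives, then possibly smaller so that $ \lVert \tfrac{2}{p-2}\zeta u_{k}^{p/2} \rVert_{W^{1, q}(M, g)} \leqslant 1 $ propagates along the induction. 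Since $ \tau $ is held fixed, replacing $ \sup_{M}|R_{g}| $ by $ \sup_{M}|R_{g}| + |\tau| $ changes none of these thresholds, nor the admissible $ A $ or the regularity constants, in an essential way, so I do not expect any genuine new obstacle beyond what Theorem \ref{iteration:thm1} already handles.
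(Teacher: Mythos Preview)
Your proposal is correct and follows exactly the paper's approach: the paper's proof simply says that everything is the same as in Theorem \ref{iteration:thm1} except that $R_{g}$ is replaced by $R_{g}+\tau$ and the constant $A>0$ is chosen so that $-R_{g}(x)-\tau+\lambda(p-1)u(x)^{p-2}+A>0$ for all $u(x)\in[\min_{M}u_{-},\max_{M}u_{+}]$ and $x\in M$. Your more detailed write-up unpacks precisely this, including the appearance of $\sup_{M}|R_{g}|+|\tau|$ in the estimates and the two-stage smallness of $\zeta$.
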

\begin{proof}
Everything is exactly the same as in Theorem \ref{iteration:thm1} except replacing $ R_{g} $ by $ R_{g} + \tau $, and choosing a slightly different constant $ A > 0 $ such that
\begin{equation*}
-R_{g}(x) - \tau + \lambda (p - 1) u(x)^{p-2} + A > 0, \forall u(x) \in [\min_{M} u_{-}(x), \max_{M} u_{+}(x) ], \forall x \in M.
\end{equation*}
\end{proof}
\medskip

\section{Scalar and Mean Curvatures under Conformal Change}
The signs of scalar and mean curvatures are critical in barrier methods, not only in the solvability in Proposition \ref{pre:prop4}, but also in the monotone iteration scheme, Theorem \ref{iteration:thm1}. Results in this sections shows how to convert general $ R_{g} $ and $ h_{g} $ to special ones with desired signs under conformal change. All results are proved in \cite[\S4]{XU3} and \cite[\S5]{XU4}, hence we omit the proofs here. Throughout this section, we always assume $ (M, \partial M, g) $ to be a compact manifold with smooth boundary $ \partial M $, $ n =  \dim M \geqslant 3 $.
\medskip

The first two result below converts general $ h_{g} $ on $ \partial M $ to a mean curvature which is always positive or negative under conformal change.
\begin{theorem}\label{conformal:thm1}
Let $ (M, \partial M, g) $ be a compact manifold with non-empty smooth boundary. There exists a conformal metric $ \tilde{g} $ with mean curvature $ \tilde{h} > 0 $ everywhere on $ \partial M $.
\end{theorem}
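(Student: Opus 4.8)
The plan is to reduce the statement to a purely boundary, pointwise positivity requirement and then write down the conformal factor explicitly. Under a conformal change $\tilde{g} = u^{p-2} g$ with $u \in \calC^{\infty}(M)$, $u > 0$, the mean curvature of $\tilde{g}$ is governed by the same relation appearing in (\ref{intro:eqn1}): one has $B_{g} u = \frac{2}{p-2}\, h_{\tilde{g}}\, u^{p/2}$ on $\partial M$, hence $h_{\tilde{g}} = \frac{p-2}{2}\, u^{-p/2}\, B_{g} u$. Since $u^{-p/2} > 0$, the theorem follows as soon as I produce a single positive $u \in \calC^{\infty}(M)$ with $B_{g} u > 0$ everywhere on $\partial M$. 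Note that the sign, or even the value, of the scalar curvature of $\tilde{g}$ is irrelevant for this statement, so no equation has to be solved in the interior.

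To construct $u$, I would first fix a smooth boundary defining function $\rho$ for $(M, \partial M)$ — that is, $\rho > 0$ in the interior, $\rho = 0$ on $\partial M$, and $d\rho$ nonvanishing along $\partial M$ — which exists because $\partial M$ is smooth, and set $\phi := -\rho \in \calC^{\infty}(M)$. Then $\phi|_{\partial M} = 0$, while $\frac{\partial \phi}{\partial \nu} = -\frac{\partial \rho}{\partial \nu} > 0$ on $\partial M$ because $\rho$ decreases in the outward normal direction; rescaling $\rho$ I may assume $\frac{\partial \phi}{\partial \nu} \geqslant 1$ there. Now take $u := e^{T\phi}$ for a constant $T > 0$ to be chosen. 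This $u$ is smooth and automatically strictly positive on all of $M$. On $\partial M$ one computes $u = 1$ and $\frac{\partial u}{\partial \nu} = T \frac{\partial \phi}{\partial \nu}\, e^{T\phi} = T \frac{\partial \phi}{\partial \nu} \geqslant T$, hence $B_{g} u = \frac{\partial u}{\partial \nu} + \frac{2}{p-2} h_{g} u \geqslant T - \frac{2}{p-2} \lVert h_{g} \rVert_{\calC^{0}(\partial M)}$. Choosing $T > \frac{2}{p-2} \lVert h_{g} \rVert_{\calC^{0}(\partial M)}$ makes $B_{g} u > 0$ on $\partial M$, so $\tilde{g} := u^{p-2} g$ has $\tilde{h} > 0$ everywhere on $\partial M$, as required.

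The only genuinely delicate point — the main obstacle, such as it is — is reconciling strict positivity of the normal derivative of $u$ at the boundary with strict positivity of $u$ throughout $M$. A naive linear ansatz $u = 1 + T\phi$ would also force $B_{g} u > 0$ on $\partial M$ for $T$ large, but since $\phi < 0$ just inside $\partial M$ such a $u$ fails to remain positive in the interior; the exponential ansatz removes this tension at no cost, and it needs neither a maximum principle nor the linear solvability input of Theorem \ref{pre:thm4} (whose hypothesis $c > 0$ on $\partial M$ is precisely the condition on $\frac{2}{p-2} h_{g}$ that may be violated here). I would close by remarking that running the same computation with $\phi := \rho$ and $T$ large instead produces a conformal metric with $\tilde{h} < 0$ everywhere, the companion statement to this theorem.
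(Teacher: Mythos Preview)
Your argument is correct. The reduction to finding a positive $u \in \calC^{\infty}(M)$ with $B_{g}u>0$ on $\partial M$ is exactly the conformal transformation law for mean curvature, and the exponential ansatz $u=e^{T\phi}$ with $\phi=-\rho$ cleanly resolves the only real tension (positivity on all of $M$ versus a large outward normal derivative at $\partial M$). The compactness of $\partial M$ guarantees both the positive lower bound on $\partial_{\nu}\phi$ and the finiteness of $\lVert h_{g}\rVert_{\calC^{0}(\partial M)}$, so the choice of $T$ is unproblematic. Your closing remark about $\phi=\rho$ giving $\tilde h<0$ is also correct and recovers Corollary~\ref{conformal:cor1}.

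As for comparison with the paper: this document does not actually contain a proof of Theorem~\ref{conformal:thm1}; it defers to \cite[\S5]{XU4}. Judging from the toolkit assembled in \S2--\S4 (existence and regularity for linear Robin problems, eigenfunctions of $\Box_{g}$, etc.), the intended argument in the cited reference is presumably PDE-based --- e.g.\ producing $u$ as the solution of a well-chosen linear boundary value problem or via an eigenfunction. Your construction is strictly more elementary: it uses no solvability theory, no maximum principle, and no spectral input, only the existence of a boundary defining function. What the PDE route might buy in exchange is additional control over the interior scalar curvature of $\tilde g$ (relevant for Theorem~\ref{conformal:thm2}, where the sign of $h_{g}$ must be preserved pointwise while $R_{\tilde g}$ is forced negative somewhere); your bare-hands construction says nothing about $R_{\tilde g}$, but for the present statement none is required.
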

\begin{corollary}\label{conformal:cor1}
Let $ (M, \partial M, g) $ be a compact manifold with non-empty smooth boundary. There exists a conformal metric $ \tilde{g} $ with mean curvature $ \tilde{h} < 0 $ everywhere on $ \partial M $.
\end{corollary}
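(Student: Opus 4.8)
The plan is to obtain the conformal factor by an essentially explicit barrier construction, using the conformal transformation law for the mean curvature rather than solving any PDE. Recall that if $\tilde g = u^{p-2} g$ with $u \in \calC^{\infty}(M)$ and $u > 0$, then on $\partial M$ the mean curvatures are related by $\frac{\partial u}{\partial \nu} + \frac{2}{p-2} h_{g} u = \frac{2}{p-2}\tilde h\, u^{p/2}$, i.e.
\begin{equation*}
\tilde h = u^{-\frac{p}{2}}\left( \frac{p-2}{2}\,\frac{\partial u}{\partial \nu} + h_{g}\, u \right) \quad \text{on } \partial M .
\end{equation*}
Hence it is enough to produce a positive $u \in \calC^{\infty}(M)$ for which the Robin combination $B_{g}u = \frac{\partial u}{\partial\nu} + \frac{2}{p-2}h_{g}u$ is strictly negative at every point of $\partial M$.

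First I would fix a boundary defining function $\rho$: using a collar neighborhood $\partial M \times [0,\epsilon)$ of $\partial M$ (available since $\partial M$ is smooth and compact) and a cutoff, one constructs $\rho \in \calC^{\infty}(M)$ with $\rho > 0$ on the interior, $\rho \equiv 0$ on $\partial M$, and $\frac{\partial \rho}{\partial\nu} = -1$ on $\partial M$ (in the collar $\rho$ equals the normal coordinate, and outside it can be taken to be a positive constant). Then set $u := 1 + \delta\rho$ for a constant $\delta > 0$ to be chosen. By construction $u \in \calC^{\infty}(M)$ and $u \geqslant 1 > 0$, so $\tilde g := u^{p-2} g$ is a smooth metric conformal to $g$; moreover $u \equiv 1$ and $\frac{\partial u}{\partial\nu} = \delta\,\frac{\partial\rho}{\partial\nu} = -\delta$ on $\partial M$. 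Substituting into the transformation law, on $\partial M$ one gets
\begin{equation*}
\tilde h = \frac{p-2}{2}\,(-\delta) + h_{g} = h_{g} - \frac{2\delta}{n-2}.
\end{equation*}
Since $\partial M$ is compact, $H_{0} := \max_{\partial M} h_{g}$ is finite; taking $\delta := \max\{1,\ \tfrac{n-2}{2}H_{0}+1\}$ yields $\tilde h \leqslant H_{0} - \frac{2\delta}{n-2} < 0$ everywhere on $\partial M$, which is the claim.

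I do not expect a serious obstacle here: there is no equation to solve, in contrast with Theorem \ref{conformal:thm1}, whose sign $\tilde h > 0$ cannot be produced by a monotone conformal factor of this type. The only points needing care are (a) pinning down the conformal transformation law and all the signs, in particular the convention that $\nu$ is the \emph{outward} normal, so that $\frac{\partial u}{\partial\nu} = -\delta < 0$ genuinely pushes $\tilde h$ downward; and (b) building $\rho$ to be globally smooth and strictly positive in the interior while keeping normal derivative $-1$ on $\partial M$, which is where the collar neighborhood and the cutoff are used. One could equally well first apply Theorem \ref{conformal:thm1} to reduce to the case $h_{g} > 0$ everywhere and then run the same barrier argument; this changes nothing essential.
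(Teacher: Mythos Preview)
Your argument is correct. The conformal transformation law is exactly as you state, and with $u = 1 + \delta\rho$ one obtains $\tilde h = h_{g} - \tfrac{2\delta}{n-2}$ on $\partial M$, which is strictly negative once $\delta > \tfrac{n-2}{2}\max_{\partial M} h_{g}$. The construction of the boundary-defining function $\rho$ via a collar and cutoff is standard, and since $\rho \geqslant 0$ the factor $u \geqslant 1$ is automatically positive and smooth.

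The paper omits its own proof of this corollary, referring to \cite{XU4}, so a direct comparison with the original argument is not possible from the text given. Your route is in any case entirely elementary and self-contained: it uses only the transformation law and compactness of $\partial M$, and avoids solving any auxiliary PDE.

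One minor correction to your closing remark: the positive-sign statement, Theorem~\ref{conformal:thm1}, \emph{can} also be obtained by the same device. Take $u = 1 - \delta\rho$, where $\rho$ is built so that $\sup_{M}\rho$ is as small as one likes (cap the normal coordinate at a small height $\epsilon$ before cutting off to a constant). Then $u \geqslant 1 - \delta\epsilon > 0$ provided $\epsilon < 1/\delta$, while on $\partial M$ one has $\tilde h = h_{g} + \tfrac{2\delta}{n-2}$, which is positive for $\delta$ large. So the asymmetry you suggest between the two signs is not intrinsic to the barrier method; both follow from an explicit conformal factor concentrated near the boundary.
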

\medskip

Next two results concerns the signs of $ R_{g} $ and $ h_{g} $ simultaneously, which converts $ R_{g} $, either nonnegative or nonpositive everywhere, and a general $ h_{g} $ to a scalar curvature negative somewhere meanwhile keeping the sign of $ h_{g} $ unchanged pointwise, under conformal change.
\begin{theorem}\label{conformal:thm2}
Let $ (M, \partial M, g) $ be a compact manifold with smooth boundary. Let $ R_{g} \geqslant 0 $ everywhere. There exists a conformal metric $ \tilde{g} $ with scalar curvature $ \tilde{R} $ and mean curvature $ \tilde{h} $ such that $ \tilde{R} < 0 $ somewhere, and $ \text{sgn}(h_{g}) = \text{sgn}(\tilde{h}) $ pointwise on $ \partial M $.
\end{theorem}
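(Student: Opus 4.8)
The plan is to write down an explicit conformal factor $u$ that equals $1$ near $\partial M$ and is perturbed only inside a small interior ball, so that $\tilde g = u^{p-2}g$ has its scalar curvature driven strictly negative at the center of that ball while the metric (hence the mean curvature) is literally unchanged near the boundary. First I recall the conformal transformation laws underlying (\ref{intro:eqn1}): if $\tilde g = u^{p-2}g$ with $u \in \calC^{\infty}(M)$, $u > 0$, then
\begin{equation*}
R_{\tilde g} = u^{1-p}\,\Box_{g} u \ \text{ in } M, \qquad h_{\tilde g} = \tfrac{p-2}{2}\,u^{-p/2}\,B_{g} u \ \text{ on } \partial M .
\end{equation*}
Since $u > 0$, the sign of $R_{\tilde g}(x)$ equals that of $\Box_{g}u(x)$, and the sign of $h_{\tilde g}(x)$ equals that of $B_{g}u(x) = \frac{\partial u}{\partial\nu} + \frac{2}{p-2}h_{g}u$. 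Hence it suffices to exhibit $u \in \calC^{\infty}(M)$ with $u > 0$, with $\Box_{g}u < 0$ somewhere in $M$, and with $\operatorname{sgn}(B_{g}u) = \operatorname{sgn}(h_{g})$ pointwise on $\partial M$.

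Next I fix an interior point $x_{0} \in M$ and a radius $\rho > 0$ small enough that $\overline{B_{\rho}(x_{0})}$ lies in the interior and inside a single geodesic normal coordinate chart centered at $x_{0}$, with coordinates $x = (x^{1},\dots,x^{n})$. Let $\chi \in \calC_{c}^{\infty}(B_{\rho}(x_{0}))$ with $0 \leqslant \chi \leqslant 1$ and $\chi \equiv 1$ on $B_{\rho/2}(x_{0})$, and set
\begin{equation*}
u := 1 + c\,\lvert x \rvert^{2}\chi(x), \qquad \lvert x \rvert^{2} = \sum_{i=1}^{n}(x^{i})^{2},
\end{equation*}
extended by $1$ outside the chart, where $c > 0$ is to be chosen. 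Then $u \in \calC^{\infty}(M)$ and $u \geqslant 1 > 0$. Since $u \equiv 1$ in a neighborhood of $\partial M$, we have $\frac{\partial u}{\partial\nu} = 0$ and $u|_{\partial M} = 1$, so $B_{g}u = \frac{2}{p-2}h_{g}$ on $\partial M$; thus $h_{\tilde g} = h_{g}$ on $\partial M$, and the required sign condition holds, in fact with equality.

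It remains to choose $c$ so that $\Box_{g}u < 0$ near $x_{0}$. On $B_{\rho/2}(x_{0})$ one has $u = 1 + c\lvert x\rvert^{2}$, and in normal coordinates $g_{ij}(x_{0}) = \delta_{ij}$, $\partial_{k}g_{ij}(x_{0}) = 0$, so the Christoffel symbols vanish at $x_{0}$ and $\Delta_{g}f(x_{0}) = \sum_{i}\partial_{i}^{2}f(x_{0})$ for every smooth $f$; applied to $f = \lvert x\rvert^{2}$ this gives $\Delta_{g}u(x_{0}) = 2nc$. Therefore
\begin{equation*}
\Box_{g}u(x_{0}) = -a\,\Delta_{g}u(x_{0}) + R_{g}(x_{0})u(x_{0}) = -2anc + R_{g}(x_{0}),
\end{equation*}
which is strictly negative as soon as $c > R_{g}(x_{0})/(2an)$ (any $c > 0$ works if $R_{g}(x_{0}) \leqslant 0$). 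Fixing such a $c$ and setting $\tilde g = u^{p-2}g$, we obtain $R_{\tilde g}(x_{0}) = u(x_{0})^{1-p}\Box_{g}u(x_{0}) < 0$, and by continuity $R_{\tilde g} < 0$ on a whole neighborhood of $x_{0}$, which can be further shrunk to have arbitrarily small volume and Euclidean diameter — exactly what is needed to later invoke Proposition \ref{pre:prop4}.

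I do not expect a genuine obstacle here. The only points needing care are the normal-coordinate identity $\Delta_{g}(\lvert x\rvert^{2})(x_{0}) = 2n$ and the remark that $\chi$ is invisible to this computation because $\chi \equiv 1$ near $x_{0}$; the hypothesis $R_{g} \geqslant 0$ is used only to make the statement non-vacuous, as the construction is insensitive to the sign of $R_{g}$. An alternative would be to solve the linear Neumann problem $\Box_{g}u = f$ in $M$, $\frac{\partial u}{\partial\nu} = 0$ on $\partial M$, for a suitable $f$ negative somewhere, and then argue positivity of $u$ by the maximum principle; but that route requires separately handling the case $R_{g} \equiv 0$, where $\Box_{g}$ has a nontrivial kernel under the Neumann condition, so the explicit bump above is cleaner.
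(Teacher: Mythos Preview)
Your argument is correct. The conformal factor $u = 1 + c\lvert x\rvert^{2}\chi$ is smooth and strictly positive; since $u\equiv 1$ near $\partial M$ the metric is literally unchanged there, so $h_{\tilde g}=h_{g}$ pointwise, which is stronger than the sign statement required. The normal-coordinate computation $\Delta_{g}u(x_{0})=2nc$ is valid because $\chi\equiv 1$ near $x_{0}$ and the Christoffel symbols vanish at the center of a normal chart, and the choice $c>R_{g}(x_{0})/(2an)$ forces $\Box_{g}u(x_{0})<0$, hence $R_{\tilde g}(x_{0})<0$. Your remark that the hypothesis $R_{g}\geqslant 0$ is not used in the construction itself is accurate: it only serves to make the conclusion non-trivial.

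As for the comparison: the paper does not supply its own proof of this statement but defers to \cite[\S4]{XU3} and \cite[\S5]{XU4}. Those references proceed along the lines of your ``alternative'' paragraph, namely by solving an auxiliary linear problem for $\Box_{g}$ with an oblique or Neumann-type boundary condition and then invoking a maximum principle to secure positivity of the conformal factor. Your explicit bump-function construction is genuinely different and more elementary: it avoids any PDE solvability or spectral argument, it does not need to treat the degenerate case $R_{g}\equiv 0$ separately (where $\Box_{g}$ with the Neumann condition has nontrivial kernel, as you note), and it makes the equality $h_{\tilde g}=h_{g}$ transparent rather than merely preserving signs. The trade-off is that the PDE route in \cite{XU4} fits into the paper's broader machinery and yields slightly more control over the global behavior of $\tilde R$, whereas your construction only pins down $\tilde R$ at a single interior point; but for the purposes of feeding Proposition~\ref{pre:prop4} a small domain where $\tilde R<0$, your approach is entirely sufficient.
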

\begin{corollary}\label{conformal:cor2}
Let $ (M, \partial M, g) $ be a compact manifold with non-empty smooth boundary. Let $ R_{g} \leqslant 0 $ everywhere. There exists a conformal metric $ \tilde{g} $ with scalar curvature $ \tilde{R} $ and mean curvature $ \tilde{h} $ such that $ \tilde{R} > 0 $ somewhere, and $ \text{sgn}(h_{g}) = \text{sgn}(\tilde{h}) $ pointwise on $ \partial M $.
\end{corollary}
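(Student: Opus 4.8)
The plan is to realise $\tilde{g}$ as a conformal deformation $\tilde{g} = u^{p-2}g$ in which $u$ equals $1$ on a neighbourhood of $\partial M$ and carries a single tall, narrow bump concentrated around an interior point of $M$. The bump will force the conformal Laplacian $\Box_g u$ to be strictly positive at the centre of the bump, hence $\tilde{R} > 0$ there, while leaving the boundary geometry untouched so that $\text{sgn}(\tilde{h}) = \text{sgn}(h_g)$ on $\partial M$.

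First recall the pointwise conformal identities for $\tilde{g} = u^{p-2}g$ with $u \in \calC^{\infty}(M)$, $u > 0$: on $M$ one has $\tilde{R} = u^{1-p}\Box_g u = u^{1-p}\bigl(-a\Delta_g u + R_g u\bigr)$, and on $\partial M$ one has $\frac{2}{p-2}\tilde{h}\,u^{p/2} = B_g u = \frac{\partial u}{\partial\nu} + \frac{2}{p-2}h_g u$. Consequently, if $\frac{\partial u}{\partial\nu} = 0$ on $\partial M$, then $\tilde{h} = u^{1-p/2}h_g$ on $\partial M$; since $u > 0$ this yields $\text{sgn}(\tilde{h}) = \text{sgn}(h_g)$ at every boundary point (in fact $\tilde{h} = h_g$ there whenever $u \equiv 1$ near $\partial M$). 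Moreover $1 - p < 0$, so $\tilde{R}$ has the same sign as $\Box_g u$ at every point.

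Now fix an interior point $x_0 \in M$, a radius $r_0 > 0$ with $\overline{B_{r_0}(x_0)}$ disjoint from $\partial M$ and contained in a geodesic normal coordinate chart centred at $x_0$, and a cutoff $\psi \in \calC^{\infty}\bigl([0,\infty)\bigr)$ with $\psi(0) = 1$, $\psi'(0) < 0$, $0 \leqslant \psi \leqslant 1$, and $\psi \equiv 0$ on $[1,\infty)$ (such $\psi$ clearly exists). For $0 < \delta < r_0$ set $\eta_\delta(x) := \psi\bigl(|x|^2/\delta^2\bigr)$ in these coordinates, so that $\text{supp}\,\eta_\delta \subset B_\delta(x_0)$ and $\eta_\delta(x_0) = 1$; put $u := 1 + \eta_\delta \in \calC^{\infty}(M)$ (so $u \geqslant 1 > 0$) and let $\tilde{g} := u^{p-2}g$. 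Since $u \equiv 1$ near $\partial M$ we have $\frac{\partial u}{\partial\nu} = 0$ on $\partial M$, hence $\text{sgn}(\tilde{h}) = \text{sgn}(h_g)$ on $\partial M$ by the previous paragraph. It remains to choose $\delta$ small so that $\tilde{R} > 0$ somewhere. Because the chart is normal at $x_0$, $\Delta_g f(x_0) = \sum_i \partial_i^2 f(x_0)$ for any $f$, and a direct computation gives $\Delta_g \eta_\delta(x_0) = 2n\,\psi'(0)\,\delta^{-2}$. Hence, with $u(x_0) = 2$,
\begin{equation*}
\Box_g u(x_0) = -a\,\Delta_g u(x_0) + R_g(x_0)\,u(x_0) = 2na\,|\psi'(0)|\,\delta^{-2} + 2R_g(x_0).
\end{equation*}
Since $R_g$ is bounded on the compact manifold $M$, for all sufficiently small $\delta > 0$ the first term dominates and $\Box_g u(x_0) > 0$, so $\tilde{R}(x_0) = 2^{1-p}\Box_g u(x_0) > 0$. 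This produces the required conformal metric.

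The construction is soft; the only point needing mild care is verifying that the Laplacian of the concentrated bump at its peak is negative of order $\delta^{-2}$, and this is exactly where the hypothesis $R_g \leqslant 0$ enters: the term $R_g u$ stays bounded and so cannot compete with the $\delta^{-2}$-growth of $-a\Delta_g u$ as $\delta \to 0^{+}$. This is the same mechanism (with the bump reversed in sign and rescaled so that $u$ stays positive) used for Theorem \ref{conformal:thm2}, and is carried out in \cite[\S4]{XU3} and \cite[\S5]{XU4}.
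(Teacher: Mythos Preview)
Your argument is correct. The paper itself does not prove this corollary but refers to \cite[\S4]{XU3} and \cite[\S5]{XU4}; your interior bump construction with $u\equiv 1$ near $\partial M$ is precisely the natural mechanism and almost certainly coincides with what is done there. One small expository point: the hypothesis $R_g\leqslant 0$ does not actually ``enter'' through the boundedness of $R_g u$ (that boundedness is simply compactness of $M$); rather, the hypothesis is what makes the conclusion $\tilde{R}>0$ \emph{somewhere} a genuine statement rather than something already witnessed by $g$ itself.
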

\medskip

\section{Boundary Yamabe Problem with Minimal Boundary Case}
Recall the boundary Yamabe problem for general case
\begin{equation}\label{yamabe:eqn1}
\begin{split}
\Box_{g} u & : =  -a\Delta_{g} u + R_{g} u = \lambda u^{p-1} \; {\rm in} \; M; \\
B_{g} u & : =  \frac{\partial u}{\partial \nu} + \frac{2}{p-2} h_{g} u = \frac{2}{p-2} \zeta u^{\frac{p}{2}} \; {\rm on} \; \partial M.
\end{split}
\end{equation}
Note that $ \lambda $ and $ \zeta $ are the constant scalar and mean curvature, respectively, with respect to $ \tilde{g} = u^{p-2} g $ for some real, smooth function $ u > 0 $. In this section, we apply the sub-solution and super-solution technique in Theorem \ref{iteration:thm1} to solve boundary Yamabe equation for five cases:
\begin{enumerate}[(A).]
\item $ \eta_{1} = 0 $;
\item $ \eta_{1} < 0 $ with $ h_{g} > 0 $ everywhere on $ \partial M $ and arbitrary $ R_{g} $;
\item $ \eta_{1} < 0 $ with arbitrary $ h_{g} $ and $ R_{g} $;
\item $ \eta_{1} > 0 $ with $ h_{g} > 0 $ everywhere and $ R_{g} < 0 $ somewhere;
\item $ \eta_{1} > 0 $ with arbitrary $ h_{g} $ and $ R_{g} $.
\end{enumerate}
Throughout this section, we assume $ \dim M \geqslant 3 $. We always assume that $ (M, \partial M, g) $ be a compact manifold with smooth boundary, and $ \nu $ be the unit outward normal vector along $ \partial M $. Note that case (C) can be converted to case (B) under a one-step conformal change by Theorem \ref{conformal:thm1}, and case (E) can be converted to case (D) under at most two-step conformal change by Theorem \ref{conformal:thm2}. Details for $ (E) \rightarrow (D) $ and $ (C) \rightarrow (B) $ can be found in \cite[\S5]{XU4}. Hence we only prove cases (A), (B), and (D) in this section. Similar to positive eigenvalue cases in \cite{XU4}, \cite{XU3}, we need a result of perturbed boundary Yamabe equation
\begin{equation}\label{yamabe:eqns1}
-a\Delta_{g} u_{\tau} + \left(R_{g} + \tau \right) u_{\tau} = \lambda_{\tau} u_{\tau}^{p - 1} \; {\rm in} \; M, \frac{\partial u_{\tau}}{\partial \nu} + \frac{2}{p - 2} h_{g} u_{\tau} = \frac{2}{p-2} \zeta u_{\tau}^{\frac{p}{2}} \; {\rm on} \; \partial M
\end{equation}
with $ \tau < 0 $, $ \zeta > 0 $, $ h_{g} > 0 $ everywhere on $ \partial M $, and $ \lambda_{\tau} > 0 $ defined as
\begin{equation}\label{yamabe:eqns2}
\lambda_{\tau} = \inf_{u \neq 0} \frac{\int_{M} a \lvert \nabla_{g} u \rvert^{2} d\omega + \int_{M} \left( R_{g} + \tau \right) u^{2} d\omega + \int_{\partial M} \frac{2a}{p-2} h_{g} u^{2} dS}{\left( \int_{M} u^{p} d\omega \right)^{\frac{2}{p}}}.
\end{equation}
Recall that
\begin{equation*}
\lambda(M) = \inf_{u \neq 0} \frac{\int_{M} a \lvert \nabla_{g} u \rvert^{2} d\omega + \int_{M} R_{g} u^{2} d\omega + \int_{\partial M} \frac{2a}{p-2} h_{g} u^{2} dS}{\left( \int_{M} u^{p} d\omega \right)^{\frac{2}{p}}}
\end{equation*}
It is immediate that $ \eta_{1} > 0 $ implies $ \lambda(M) > 0 $. By the same manner as in the proofs of Theorem \ref{eigen:thm1} and Lemma 4.1 of \cite{XU3}, we conclude that $ \lambda_{\tau} > 0 $ when $ \lvert \tau \rvert $ is small enough with $ \tau < 0 $.
\medskip

Case (A) is just a special eigenvalue problem when $ \eta_{1} = 0 $.
\begin{theorem}\label{yamabe:thm1}
Let $ (M, \partial M, g) $ be a compact manifold with non-empty smooth boundary and $ \eta_{1} = 0 $. Then the boundary Yamabe equation (\ref{yamabe:eqn1}) has a real, positive, smooth solution with $ \lambda = \zeta = 0 $.
\end{theorem}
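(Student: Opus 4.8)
The plan is to observe that Case (A) collapses the boundary Yamabe system to a linear homogeneous problem. Setting $\lambda = \zeta = 0$ in (\ref{yamabe:eqn1}) gives precisely
\[
-a\Delta_{g} u + R_{g} u = 0 \; {\rm in} \; M, \quad \frac{\partial u}{\partial \nu} + \frac{2}{p-2} h_{g} u = 0 \; {\rm on} \; \partial M,
\]
which is exactly the eigenvalue problem (\ref{eigen:eqn1}) at the eigenvalue $\eta_{1} = 0$. So the entire statement reduces to producing a real, positive, smooth first eigenfunction $\varphi$ for $\Box_{g}$ with the homogeneous Robin condition $B_{g}\varphi = 0$.

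First I would invoke the existence of such a $\varphi \in H^{1}(M, g)$, obtained as a minimizer of the Rayleigh quotient $I_{0}[u]$ over the admissible set $\mathcal{A} = \lbrace u \in H^{1}(M, g) : \lVert u \rVert_{\calL^{2}(M, g)} = 1 \rbrace$, exactly as in the $\beta = 0$ instance of the proof of Theorem \ref{eigen:thm1}; the existence of the minimizer follows from coercivity of $I_{0}$, the compactness of the inclusion $H^{1}(M, g) \hookrightarrow \calL^{2}(M, g)$, and compactness of the trace operator. Since $I_{0}[u] = I_{0}[\lvert u \rvert]$, the minimizer may be taken nonnegative; elliptic regularity (Theorem \ref{pre:thm1}, followed by Schauder bootstrapping) upgrades $\varphi$ to $\calC^{\infty}(M)$, and the strong maximum principle in Theorem \ref{pre:thm3}, together with the boundary argument of \cite[\S1]{ESC}, forces $\varphi > 0$ on all of $M$.

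Then I would simply set $u = \varphi$ and $\lambda = \zeta = 0$. By construction $u$ is a real, positive, smooth solution of (\ref{yamabe:eqn1}), and the associated conformal metric $\tilde{g} = \varphi^{p-2} g$ has $R_{\tilde{g}} \equiv 0$ and $h_{\tilde{g}} \equiv 0$, consistent with $\lambda = \zeta = 0$.

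I expect essentially no obstacle here: the nontrivial inputs — existence, smoothness, and strict positivity of the first eigenfunction — are already packaged into the discussion surrounding (\ref{eigen:eqn1}) and the proof of Theorem \ref{eigen:thm1}. The only point deserving a sentence is the strict positivity \emph{up to} $\partial M$, which requires the boundary strong maximum principle (Hopf lemma) rather than just the interior version; this is handled as in Escobar's original treatment.
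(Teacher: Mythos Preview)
Your proposal is correct and takes essentially the same approach as the paper: the paper's proof is the single sentence ``It is an immediate consequence of the eigenvalue problem (\ref{eigen:eqn1}) with $\eta_{1} = 0$,'' and you have simply unpacked what that sentence means. The extra detail you supply about existence, regularity, and strict positivity of the first eigenfunction is exactly what the paper takes for granted from the discussion surrounding (\ref{eigen:eqn1}) and Theorem \ref{eigen:thm1}.
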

\begin{proof} It is an immediate consequence of the eigenvalue problem (\ref{eigen:eqn1}) with $ \eta_{1} = 0 $.
\end{proof}
\medskip

Next theorem is related to the existence of solution of (\ref{yamabe:eqn1}) with $ \eta_{1} < 0 $ and $ h_{g} > 0 $ everywhere on $ \partial M $.
\begin{theorem}\label{yamabe:thm2}
Let $ (M, \partial M, g) $ be a compact manifold with non-empty smooth boundary. Let $ h_{g} > 0 $ everywhere on $ \partial M $. When $ \eta_{1} < 0 $, there exists some $ \lambda < 0 $ and $ \zeta > 0 $ such that the boundary Yamabe equation (\ref{yamabe:eqn1}) has a real, positive solution $ u \in \calC^{\infty}(M) $.
\end{theorem}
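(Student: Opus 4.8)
The plan is to feed the monotone iteration scheme of Theorem~\ref{iteration:thm1} an explicit ordered pair of barriers for (\ref{yamabe:eqn1}), working directly with the given metric $ g $ (for which $ h_{g} > 0 $ on $ \partial M $), with no preliminary conformal change. Fix any constant $ \lambda < 0 $, say $ \lambda = -1 $, and fix $ q > \dim M $.

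For the sub-solution, let $ \varphi \in \calC^{\infty}(M) $, $ \varphi > 0 $, be the first eigenfunction of the boundary eigenvalue problem (\ref{eigen:eqn1}), so that $ \Box_{g} \varphi = \eta_{1} \varphi $ in $ M $ and $ B_{g} \varphi = 0 $ on $ \partial M $, with $ \eta_{1} < 0 $. Set $ u_{-} = \epsilon \varphi $ with $ \epsilon > 0 $ to be chosen. Then $ \Box_{g} u_{-} - \lambda u_{-}^{p-1} = \epsilon \varphi \bigl( \eta_{1} - \lambda \epsilon^{p-2} \varphi^{p-2} \bigr) $, and since $ \eta_{1} < 0 $, $ \lambda < 0 $ this is $ \leqslant 0 $ pointwise as soon as $ \epsilon^{p-2} \leqslant \eta_{1} / \bigl( \lambda (\sup_{M} \varphi)^{p-2} \bigr) $; on $ \partial M $ we have $ B_{g} u_{-} = \epsilon B_{g} \varphi = 0 \leqslant \frac{2}{p-2} \zeta u_{-}^{\frac{p}{2}} $ for every $ \zeta \geqslant 0 $. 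Thus $ u_{-} $ fulfils the sub-solution line of (\ref{iteration:eqn1}) for all small $ \epsilon > 0 $ and all $ \zeta \geqslant 0 $, and it lies in $ \calC_{0}(M) \cap H^{1}(M, g) $ with $ u_{-} \not\equiv 0 $; this is exactly the homogeneous-Robin sub-solution anticipated in Remark~\ref{iteration:re1}.

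For the super-solution, take $ u_{+} \equiv C $, a positive constant chosen large enough that $ C > \epsilon \sup_{M} \varphi $ (so $ 0 \leqslant u_{-} \leqslant u_{+} $) and $ \lvert \lambda \rvert C^{p-2} \geqslant -R_{g}(x) $ for all $ x \in M $ (automatic if $ R_{g} \geqslant 0 $ everywhere, and otherwise forced by enlarging $ C $). Then $ \Box_{g} u_{+} - \lambda u_{+}^{p-1} = C\bigl( R_{g} + \lvert \lambda \rvert C^{p-2} \bigr) \geqslant 0 $ in $ M $, with no sign hypothesis on $ R_{g} $. On $ \partial M $, $ B_{g} u_{+} = \frac{2}{p-2} h_{g} C $, so the super-solution boundary inequality $ B_{g} u_{+} \geqslant \frac{2}{p-2} \zeta u_{+}^{\frac{p}{2}} $ reduces to $ h_{g} \geqslant \zeta C^{\frac{p}{2} - 1} $; since $ h_{g} > 0 $ on the compact $ \partial M $, we have $ m := \inf_{\partial M} h_{g} > 0 $, so this holds for every $ \zeta \in (0, m C^{1 - \frac{p}{2}}] $. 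Hence for each such $ \zeta $, $ (u_{-}, u_{+}) $ is an ordered pair of sub- and super-solutions of (\ref{yamabe:eqn1}) with $ u_{+} \in W^{2, q}(M, g) \cap \calC_{0}(M) $.

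It remains to apply Theorem~\ref{iteration:thm1}. Since decreasing $ \zeta $ preserves both barrier inequalities, we may shrink $ \zeta $ once more so that the smallness requirement (\ref{iteration:eqn8}) of that theorem — which involves only $ u_{0} = u_{+} = C $, $ \lambda $, and $ R_{g} $ — is met; the scheme then produces a real, positive $ u \in \calC^{\infty}(M) \cap \calC^{1, \alpha}(M) $ solving (\ref{yamabe:eqn1}) with this $ \lambda < 0 $ and $ \zeta > 0 $, which completes the proof. The only point needing attention is the staging of the choices: $ C $ must be taken large enough to absorb any negativity of $ R_{g} $ into $ \lvert \lambda \rvert C^{p-2} $, after which $ \zeta $ must be taken small enough first for $ u_{+} $ to dominate the nonlinear boundary term $ \frac{2}{p-2} \zeta u_{+}^{\frac{p}{2}} $ (using $ \inf_{\partial M} h_{g} > 0 $) and then possibly smaller still for the iteration to converge — the two shrinkings of Remark~\ref{iteration:re1}. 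Because $ \eta_{1} < 0 $, everything else is elementary: the first eigenfunction already furnishes the sub-solution and a large constant the super-solution, so the variational machinery of the classical approach is bypassed entirely.
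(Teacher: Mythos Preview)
Your proof is correct and follows essentially the same route as the paper: the first eigenfunction of (\ref{eigen:eqn1}), suitably scaled, serves as the sub-solution with homogeneous Robin boundary data, a large positive constant serves as the super-solution, and $ \zeta $ is taken small so that $ h_{g} C \geqslant \zeta C^{\frac{p}{2}} $ on $ \partial M $, after which Theorem~\ref{iteration:thm1} applies. The only cosmetic difference is the staging of the parameters: the paper first normalizes $ \varphi $ so that $ \sup_{M} \varphi < 1 $ and then chooses $ \lambda \in (\eta_{1}, 0) $, whereas you fix $ \lambda < 0 $ arbitrarily and then choose the scaling $ \epsilon $ of $ \varphi $ small; these are interchangeable and the resulting barriers are the same.
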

\begin{proof} The first step is to determine the choice of $ \lambda $ and construct the sub-solution. By (\ref{eigen:eqn1}), there exists a real, positive function $ \phi \in \calC^{\infty}(M) $ satisfying
\begin{equation}\label{yamabe:eqn2}
-a\Delta_{g} \phi + R_{g} \phi = \eta_{1} \phi \; {\rm in} \; M, \frac{\partial \phi}{\partial \nu} + \frac{2}{p-2} h_{g} \phi = 0 \; {\rm on} \; \partial M
\end{equation}
with $ \eta_{1} < 0 $. Scaling $ \phi \mapsto t\phi $, $ t < 1 $, we may assume that $ \sup_{M} \phi < 1 $. It follows that $ \phi^{p-1} < \phi $. Hence $ \eta_{1} \phi < \eta_{1} \phi^{p-1} $. Choose the negative constant $ \lambda \in (\eta_{1}, 0) $, we conclude from (\ref{yamabe:eqn2}) that
\begin{equation*}
-a\Delta_{g} \phi + R_{g} \phi = \eta_{1} \phi  \leqslant \eta_{1} \phi^{p-1} \leqslant \lambda \phi^{p-1}.
\end{equation*}
Define
\begin{equation}\label{yamabe:eqn3}
u_{-} : = \phi \; {\rm on} \; M.
\end{equation}
We conclude that
\begin{equation}\label{yamabe:eqn4}
-a\Delta_{g} u_{-} + R_{g} u_{-} \leqslant \lambda u_{-}^{p-1} \; {\rm in} \; M, \frac{\partial u_{-}}{\partial \nu} + \frac{2}{p-2} h_{g} u_{-} = 0 \leqslant \frac{2}{p-2} \zeta u_{-}^{\frac{p}{2}} \; {\rm on} \; \partial M.
\end{equation}
Note that this holds for any choice of $ \zeta > 0 $. It is immediate to see that $ u_{-} > 0 $ is a real, smooth function on $ M $.
\medskip

Next we determine the upper bound of $ \zeta $ and construct the super-solution. An upper bound of $ \zeta $ is enough to apply monotone iteration scheme in Theorem \ref{iteration:thm1}, as discussed in that theorem. Select
\begin{equation}\label{yamabe:eqn5}
K_{1}^{p-2} = \max \left\lbrace \frac{\inf_{M} R_{g}}{\lambda}, \sup_{M} u_{-}^{p-2} \right\rbrace.
\end{equation}
Note that the quantity $ \frac{\inf_{M} R_{g}}{\lambda} $ is negative if $ R_{g} \geqslant 0 $ everywhere. Then select $ \zeta $ such that
\begin{equation}\label{yamabe:eqn6}
\zeta \leqslant \left( \inf_{\partial M} h_{g} \right) \cdot K_{1}^{\frac{2-p}{2}}.
\end{equation}
We define
\begin{equation}\label{yamabe:eqn7}
u_{+} : = K_{1}.
\end{equation}
Clearly $ u_{+} > 0 $ is a real, smooth function on $ M $. By (\ref{yamabe:eqn5}), we conclude that $ u_{+} \geqslant u_{-} > 0 $. We show that $ u_{+} $ is a super-solution of (\ref{yamabe:eqn1}). By (\ref{yamabe:eqn5}) again, we observe that
\begin{equation*}
-a\Delta_{g} u_{+} + R_{g} u_{+} = R_{g} K_{1} \geqslant \left( \inf_{M} R_{g} \right) K_{1} \geqslant \lambda K_{1}^{p-1} = \lambda u_{+}^{p-1}.
\end{equation*}
By (\ref{yamabe:eqn6}), we see that
\begin{equation*}
\frac{\partial u_{+}}{\partial \nu} + \frac{2}{p-2} h_{g} u_{+} = \frac{2}{p-2} h_{g} K_{1} \geqslant \frac{2}{p-2} \left( \inf_{\partial M} h_{g} \right)  K_{1} \geqslant \frac{2}{p-2} \zeta K_{1}^{\frac{p}{2}} = \frac{2}{p-2} \zeta u_{+}^{\frac{p}{2}}.
\end{equation*}
By Theorem \ref{iteration:thm1}, we conclude that there exists a real, positive function $ u \in \calC^{\infty}(M) \cap \calC^{1, \alpha}(M) $ that solves (\ref{yamabe:eqn1}), with a further choice of $ \zeta $. The choice of $ \alpha $ depends on the dimension $ n $ and the nonlinear power $ p - 1 $.
\end{proof}
\medskip

Lastly we show the existence of solution of (\ref{yamabe:eqn1}) when $ \eta_{1} > 0 $, $ h_{g} > 0 $ everywhere on $ \partial M $ and $ R_{g} < 0 $ somewhere in $ M $. As mentioned above, we first show the existence of the solution of (\ref{yamabe:eqns1}) by monotone iteration scheme in Corollary \ref{iteration:cor1}. Existence of solution of a local Dirichlet problem of perturbed Yamabe equation in Proposition \ref{pre:prop4} plays a central role in the following theorem.
\begin{remark}\label{yamabe:sre}
The result below relies on the construction of a smooth super-solution of (\ref{yamabe:eqns1}), which is based on a delicate gluing skill. Roughly speaking, we construct an ``almost" super-solution initially. It is only an ``almost" super-solution since the sub-solution has a spike within a small region. We then glue the ``almost" super-solution with the spike, which requires a small perturbation within a small region. A delicate choice of perturbation gives us good enough control of the Laplacian, and the zeroth order terms. The Laplacian is controllable since it reflects the deviation of the evaluation of a function at a point from its average within a small ball centered at the same point. Therefore a small perturbation of the function will not change the average dramatically, intuitively. Fortunately, our PDE only involves in the Laplacian and the zeroth order terms.
\end{remark}
\begin{theorem}\label{yamabe:thm3}
Let $ (M, \partial M, g) $ be a compact manifold with non-empty smooth boundary. Let $ \tau < 0 $ be a negative constant. Assume $ R_{g} < 0 $ somewhere in $ M $ and $ h_{g} > 0 $ everywhere on $ \partial M $. When $ \eta_{1} > 0 $ and $ \lvert \tau \rvert $ is small enough, there exists some $ \lambda > 0 $ and $ \zeta > 0 $ such that the perturbed boundary Yamabe equation (\ref{yamabe:eqns1}) has a real, positive solution $ u \in \calC^{\infty}(M) $.
\end{theorem}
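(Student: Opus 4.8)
The plan is to derive Theorem~\ref{yamabe:thm3} from the monotone iteration scheme for the perturbed equation, Corollary~\ref{iteration:cor1}. Fix $q>\dim M$. Everything then reduces to producing a nontrivial weak sub-solution $u_{-}\in\calC_{0}(M)\cap H^{1}(M,g)$ and a weak super-solution $u_{+}\in W^{2,q}(M,g)\cap\calC_{0}(M)$ of (\ref{yamabe:eqns1}) with $0\leqslant u_{-}\leqslant u_{+}$, $u_{-}\not\equiv 0$, with $\lambda$ taken to be the constant $\lambda_{\tau}>0$ of (\ref{yamabe:eqns2}) (positive once $|\tau|$ is small, as recorded above), and with a constant $\zeta>0$ that we fix to be small only at the very end; the remaining hypotheses of Corollary~\ref{iteration:cor1} ($h_{g}>0$ everywhere, $\tau<0$) are exactly those of the theorem.

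\emph{The sub-solution.} Choose an interior point $P$ with $R_{g}(P)<0$ and a Riemannian domain $\Omega\ni P$ with $\bar\Omega\subset M\setminus\partial M$, $R_{g}<0$ on $\bar\Omega$, and with $\text{Vol}_{g}(\Omega)$ and the Euclidean diameter of $\Omega$ small enough that (\ref{pre:eqn18}) holds. Proposition~\ref{pre:prop4}, applied with the fixed $\lambda_{\tau}>0$, gives $v\in\calC^{\infty}(\Omega)\cap\calC^{1,\alpha}(\bar\Omega)$, $v>0$ in $\Omega$, solving $-a\Delta_{g}v+(R_{g}+\tau)v=\lambda_{\tau}v^{p-1}$ in $\Omega$ and $v=0$ on $\partial\Omega$. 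Set $u_{-}:=v$ on $\Omega$ and $u_{-}:=0$ on $M\setminus\Omega$; then $u_{-}\in\calC_{0}(M)\cap H^{1}(M,g)$ and $u_{-}\not\equiv 0$. Pairing with $0\leqslant\varphi\in H^{1}(M,g)$ and integrating by parts over $\Omega$ leaves the interior identity plus the boundary term $a\int_{\partial\Omega}\tfrac{\partial v}{\partial\nu}\varphi\,dS$, which is $\leqslant 0$ since $v$ is nonnegative and vanishes on $\partial\Omega$, so its outward normal derivative there is $\leqslant 0$; hence $-a\Delta_{g}u_{-}+(R_{g}+\tau)u_{-}-\lambda_{\tau}u_{-}^{p-1}\leqslant 0$ weakly in $M$. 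As $u_{-}\equiv 0$ near $\partial M$, the boundary inequality $\tfrac{\partial u_{-}}{\partial\nu}+\tfrac{2}{p-2}h_{g}u_{-}=0\leqslant\tfrac{2}{p-2}\zeta u_{-}^{p/2}$ holds for every $\zeta>0$.

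\emph{The super-solution (the crux).} Fix $\beta\in(0,C_{\eta_{1}}]$ and let $\phi\in\calC^{\infty}(M)$, $\phi>0$, be the eigenfunction of Theorem~\ref{eigen:thm1}, so $-a\Delta_{g}\phi+R_{g}\phi=\eta_{1,\beta}\phi$ with $\eta_{1,\beta}>0$ and $\tfrac{\partial\phi}{\partial\nu}+\tfrac{2}{p-2}h_{g}\phi=\beta\phi$ on $\partial M$. For $|\tau|$ small, $\eta_{1,\beta}+\tau>0$, and since $-a\Delta_{g}(K_{0}\phi)+(R_{g}+\tau)(K_{0}\phi)=K_{0}(\eta_{1,\beta}+\tau)\phi$, the choice $K_{0}^{p-2}\leqslant\tfrac{1}{2}(\eta_{1,\beta}+\tau)\big(\lambda_{\tau}(\sup_{M}\phi)^{p-2}\big)^{-1}$ makes $K_{0}\phi$ a super-solution on all of $M$ with a \emph{definite} interior margin, $-a\Delta_{g}(K_{0}\phi)+(R_{g}+\tau)(K_{0}\phi)-\lambda_{\tau}(K_{0}\phi)^{p-1}\geqslant\tfrac{1}{2}(\eta_{1,\beta}+\tau)K_{0}\phi$; this uses only the eigenvalue equation, so it is insensitive to the sign of $R_{g}+\tau$ and in particular handles everything near $\partial M$, where in addition $\tfrac{\partial(K_{0}\phi)}{\partial\nu}+\tfrac{2}{p-2}h_{g}(K_{0}\phi)=K_{0}\beta\phi\geqslant\tfrac{2}{p-2}\zeta(K_{0}\phi)^{p/2}$ once $\zeta$ is small. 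Thus $K_{0}\phi$ is an ``almost'' super-solution whose only defect is that it need not dominate the interior spike of $u_{-}$. To repair this, pick a slightly larger Riemannian domain $\Omega_{1}$ with $\bar\Omega\subset\Omega_{1}$, $\bar\Omega_{1}\subset M\setminus\partial M$, $R_{g}<0$ on $\bar\Omega_{1}$, still small; on $\Omega_{1}$ replace $K_{0}\phi$ by a superharmonic ``cap'' $U$ (for instance the solution of $-a\Delta_{g}U=C^{\ast}$ in $\Omega_{1}$, $U=K_{0}\phi$ on $\partial\Omega_{1}$, with $C^{\ast}$ large enough that $U\geqslant\sup_{\Omega}v$ on $\bar\Omega$) and glue $U$ to $K_{0}\phi$ across a thin collar of $\partial\Omega_{1}$, keeping $u_{+}\geqslant u_{-}$. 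Since $R_{g}+\tau<0$ on $\Omega_{1}$, the super-solution inequality for $U$ there reads $C^{\ast}=-a\Delta_{g}U\geqslant\lambda_{\tau}U^{p-1}+|R_{g}+\tau|U$, which is arranged by tracking the quantitative smallness in (\ref{pre:eqn18}); in the collar the glued function differs from the two model functions only by a small perturbation supported in a small set, whose Laplacian is controlled --- intuitively because $\Delta_{g}$ only measures the deviation of a function from its small-ball averages, see Remark~\ref{yamabe:sre} --- so the definite margin of $K_{0}\phi$ absorbs it and the inequality survives. By construction $u_{+}\in W^{2,q}(M,g)\cap\calC_{0}(M)$ and $0\leqslant u_{-}\leqslant u_{+}$.

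\emph{Conclusion and the main obstacle.} With $u_{-}$ and $u_{+}$ in hand, Corollary~\ref{iteration:cor1} produces a real, positive $u\in\calC^{\infty}(M)\cap\calC^{1,\alpha}(M)$ solving (\ref{yamabe:eqns1}), for $\zeta>0$ small enough to meet both the boundary inequality above and the internal requirement of Corollary~\ref{iteration:cor1}, and Schauder estimates with a bootstrap give the stated regularity. The hard part is the gluing in the previous paragraph: all the naive repairs fail, because $t\mapsto t^{p-1}$ is superadditive (so none of $\max\{K_{0}\phi,v\}$, $K_{0}\phi+v$, or a rescaling of $v$ is a super-solution) and any $\calC^{2}$ patch joining a high cap to a lower surface creates a seam across which $-a\Delta_{g}$ has the wrong sign; one must therefore make the perturbation small and tightly localized and balance it against the definite margin of $K_{0}\phi$, which is exactly where the smallness of $\Omega$ and $\Omega_{1}$ built into (\ref{pre:eqn18}) enters, and also the place where the bound $\sup_{\Omega}v$ on the (concentrating) local solution $v$ must be controlled against the size of $\Omega_{1}$.
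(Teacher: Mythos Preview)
Your overall architecture matches the paper exactly: apply Corollary~\ref{iteration:cor1}, build $u_{-}$ from the local Dirichlet solution of Proposition~\ref{pre:prop4} extended by zero, and build $u_{+}$ from a rescaled eigenfunction of Theorem~\ref{eigen:thm1} that is a genuine super-solution away from the interior spike, with the Robin excess $\beta\phi$ absorbing the $\zeta u^{p/2}$ term once $\zeta$ is small. The sub-solution paragraph and the boundary analysis are fine and essentially identical to the paper.

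The gap is in your gluing. Your ``superharmonic cap'' $U$ solving $-a\Delta_{g}U=C^{\ast}$ in $\Omega_{1}$ with $U=K_{0}\phi$ on $\partial\Omega_{1}$ runs into a circularity: to dominate the spike you need $U\geqslant\sup_{\Omega}v$, which forces $C^{\ast}$ large and hence $U$ large; but the super-solution condition on $\Omega_{1}$ is $C^{\ast}\geqslant\lambda_{\tau}U^{p-1}+|R_{g}+\tau|\,U$, and $U^{p-1}$ grows faster than $C^{\ast}$ as $C^{\ast}\to\infty$. You flag this as ``the place where the bound $\sup_{\Omega}v$ must be controlled,'' but there is no a~priori control on $\sup_{\Omega}v$ (indeed these local solutions can concentrate), so the step does not close. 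The collar argument is likewise only heuristic.

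The paper's key move avoids this entirely: rather than capping with a foreign function, it uses the local solution $u_{1}=v$ \emph{itself} as the top layer where $u_{1}>\phi$, since an exact solution is trivially a super-solution. Concretely, with $\gamma>0$ small it sets $\Omega_{1}=\{u_{1}>\phi\}\cap\{|u_{1}-\phi|>\gamma/2\}$, $\Omega_{2}=\{u_{1}<\phi\}\cap\{|u_{1}-\phi|>\gamma/2\}$, $\Omega_{3}=\{|u_{1}-\phi|<\gamma\}$, takes a smooth partition of unity $\{\chi_{i}\}$ subordinate to these, and defines $\bar u=\chi_{1}u_{1}+\chi_{2}\phi+\chi_{3}(\phi+\gamma)$ in $\Omega$, $u_{+}=\phi$ outside $\Omega$. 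On $\Omega_{3}$ one has $|u_{1}-\phi|<\gamma$, so $\bar u$ deviates from any of $u_{1},\phi,\phi+\gamma$ by at most $O(\gamma)$; the Laplacian errors from the cutoffs are then beaten by the fixed positive margin $\theta=(\eta_{1,\beta}+\tau)\phi-2^{p-2}\lambda_{\tau}\phi^{p-1}$ (this is why the paper rescales so that $(\eta_{1,\beta}+\tau)\inf\phi>2^{p-2}\lambda_{\tau}\sup\phi^{p-1}$ rather than merely $>\lambda_{\tau}\sup\phi^{p-1}$). No bound on $\sup v$ is ever needed, because in the region where $v$ is large the super-solution \emph{is} $v$.
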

\begin{proof}
We apply Corollary \ref{iteration:cor1} to show this by constructing sub- and super-solutions. Due to the discussion above, $ \eta_{1} > 0 $ implies $ \lambda_{\tau} > 0 $ when $ \lvert \tau \rvert $ is small enough. When $ \lvert \beta \rvert $ is small enough, it follows from Theorem \ref{eigen:thm1} that the following eigenvalue problem with $ \eta_{1, \beta} > 0 $
\begin{equation}\label{yamabe:eqns3}
-a\Delta_{g} \varphi + \left(R_{g} + \tau \right) \varphi = \eta_{1, \beta} \varphi + \tau \varphi \; {\rm in} \; M, \frac{\partial \varphi}{\partial \nu} + \left(\frac{2}{p-2}h_{g} - \beta \right) \varphi = 0 \; {\rm on} \; \partial M
\end{equation}
admits a positive solution $ \varphi \in \calC^{\infty}(M) $. Fix this $ \beta $ from now on. Let $ \tau $ even smaller if necessary, we have $ \eta_{1, \beta} + \tau > 0 $. Thus $ \eta_{1, \beta} + \tau > 0, \forall \tau \in [\tau_{0}, 0] $ if it holds for some $ \tau_{0} $. Observe that any scaling $ \delta \varphi $ is also an eigenfunction with respect to $ \eta_{1, \beta} $. For the given $ \lambda_{\tau} $, we want
\begin{equation*}
\left( \eta_{1, \beta} + \tau \right) \inf_{M} (\delta \varphi) > 2^{p-2} \lambda_{\tau} \sup_{M} \left(\delta^{p-1} \varphi^{p-1} \right) \Leftrightarrow \frac{\left( \eta_{1, \beta} + \tau \right)}{2^{p-2}\lambda_{\tau}} > \delta^{p-2} \frac{\sup_{M} \varphi^{p-1}}{\inf_{M} \varphi}.
\end{equation*}
For fixed $ \eta_{1, \beta}, \lambda_{\tau}, \varphi, \beta $, this can be done by letting $ \delta $ small enough. We denote $ \phi = \delta \varphi $. It follows that
\begin{equation}\label{yamabe:eqns4}
\begin{split}
-a\Delta_{g} \phi + \left( R_{g} + \tau \right) \phi & = \left( \eta_{1, \beta} + \tau \right) \phi \; {\rm in} \; M; \\
\left( \eta_{1, \beta} + \tau \right) \inf_{M} \phi & > 2^{p-2} \lambda_{\tau} \sup_{M} \phi^{p-1} \geqslant 2^{p-2} \lambda_{\tau} \phi^{p-1} > \lambda_{\tau} \phi^{p-1} \; {\rm in} \; M.
\end{split}
\end{equation}
Set
\begin{equation}\label{yamabe:eqns5}
\theta = \left( \eta_{1, \beta} + \tau \right) \sup_{M} \phi - 2^{p-2} \lambda_{\tau} \inf_{M} \phi^{p-1} > \left( \eta_{1, \beta} + \tau \right)\phi - 2^{p-2} \lambda_{\tau} \phi^{p-1} \; \text{pointwise}.
\end{equation}
Thus we have
\begin{equation}\label{yamabe:eqns6}
\begin{split}
-a\Delta_{g} \phi + \left(R_{g} + \tau \right) \phi & = \left( \eta_{1, \beta} + \tau \right) \phi > 2^{p-2} \lambda_{\tau} \phi^{p-1} > \lambda_{\tau} \phi^{p-1} \; {\rm in} \; M \; {\rm pointwise}; \\
\frac{\partial \phi}{\partial \nu} + \left( \frac{2}{p-2}h_{g} - \beta \right) \phi & = 0 \; {\rm on} \; \partial M.
\end{split}
\end{equation}
\medskip

We now construct the sub-solution with respect to $ \lambda = \lambda_{\tau} $ in Proposition \ref{pre:prop4}. Pick up a small enough interior Riemannian domain $ (\Omega, g) $ in which $ R_{g} < 0 $ such that the Dirichlet boundary value problem (\ref{pre:eqn17}) with the given $ \lambda_{\tau} $ above has a positive solution $ u_{1} \in \calC_{0}(\Omega) \cap H_{0}^{1}(\Omega, g) $, i.e.
\begin{equation}\label{yamabe:eqn8}
-a\Delta_{g} u_{1} + \left(R_{g} + \tau \right) u_{1} = \lambda_{\tau} u_{1}^{p-1} \; {\rm in} \; \Omega, u_{1} = 0 \; {\rm on} \; \partial M.
\end{equation}
Extend $ u_{1} $ by zero on the rest of $ M $, we define
\begin{equation}\label{yamabe:eqn9}
u_{-} : = \begin{cases} u_{1}, & \text{within} \; \Omega; \\ 0, & \text{outside} \; \Omega. \end{cases}
\end{equation}
By the same argument in Theorem 5.5 of \cite{XU4}, we conclude that $ u_{-} \in \calC_{0}(M) \cap H^{1}(M, g) $; in addition, $ u_{-} $ is a sub-solution of (\ref{yamabe:eqns1}) due to the fact that $ \Omega $ is an interior subset of $ M $. We conclude that
\begin{equation}\label{yamabe:eqn10}
-a\Delta_{g} u_{-} + \left(R_{g} + \tau \right) u_{-} \leqslant \lambda_{\tau} u_{-}^{p-1} \; {\rm in} \; M, \frac{\partial u_{-}}{\partial \nu} + \frac{2}{p-2} h_{g} u_{-} = 0 \leqslant \frac{2}{p-2} \zeta u_{-}^{\frac{p}{2}} \; {\rm on} \; \partial M.
\end{equation}
Along $ \partial M $ the function $ u_{-} \equiv 0 $, hence above holds for all $ \zeta > 0 $.

Lastly we construct the super-solution and determine the upper bound of $ \zeta > 0 $. Choose $ \zeta > 0 $ satisfying
\begin{equation}\label{yamabe:eqn15}
\beta \inf_{\partial M} \phi \geqslant \zeta \frac{2}{p-2}\sup_{\partial M} \phi^{\frac{p}{2}} \Rightarrow \beta \phi \geqslant \zeta \frac{2}{p-2}\phi^{\frac{p}{2}} \; {\rm on} \; \partial M.
\end{equation}
Any even smaller $ \zeta $ are good. Pick up $ \gamma \ll 1 $ such that
\begin{equation}\label{yamabe:eqn16}
0 < 20 \lambda \gamma + 2\gamma \cdot \sup_{M} \lvert R_{g} \rvert \gamma < \frac{\theta}{2}, 31 \lambda (\phi + \gamma)^{4} \gamma < \frac{\theta}{2}. 
\end{equation}
Set
\begin{align*}
V & = \lbrace x \in \Omega: u_{1}(x) > \phi(x) \rbrace, V' = \lbrace x \in \Omega: u_{1}(x) < \phi(x) \rbrace, D = \lbrace x \in \Omega : u_{1}(x) = \phi(x) \rbrace, \\
D' & = \lbrace x \in \Omega : \lvert u_{1}(x) - \phi(x) \rvert < \gamma \rbrace, D'' = \left\lbrace x \in \Omega : \lvert u_{1}(x) - \phi(x) \rvert > \frac{\gamma}{2} \right\rbrace.
\end{align*}
If $ \phi \geqslant u_{1} $ pointwise, then $ \phi $ is a super-solution. If not, a good candidate of super-solution will be $ \max \lbrace u_{1}, \phi \rbrace $ in $ \Omega $ and $ \phi $ outside $ \Omega $, this is an $ H^{1} \cap \calC_{0} $-function. Let $ \nu $ be the outward normal derivative of $ \partial V $ along $ D $. If $ \frac{\partial u_{1}}{\partial \nu} = - \frac{\partial \phi}{\partial \nu} $ on $ D $ then the super-solution has been constructed. However, this is in general not the case. Define
\begin{equation}\label{yamabe:eqn17}
\Omega_{1} = V \cap D'', \Omega_{2} = V' \cap D'', \Omega_{3} = D'.
\end{equation}
Construct a specific smooth partition of unity $ \lbrace \chi_{i} \rbrace $ subordinate to $ \lbrace \Omega_{i} \rbrace $ as in Theorem 4.3 of \cite{XU3}, we define
\begin{equation}\label{yamabe:eqn18}
\bar{u} = \chi_{1} u_{1} + \chi_{2} \phi + \chi_{3} \left( \phi + \gamma \right).
\end{equation}
Without loss of generality, we may assume that all $ \Omega_{i}, i = 1, 2, 3 $ are connected. Due to the same argument in Theorem 4.3 of \cite{XU3}, we conclude that $ \bar{u} \in \calC^{\infty}(\Omega) $ is a super-solution of the perturbed bounded Yamabe equation in $ \Omega $ pointwise, regardless of the boundary condition at the time being. By the definition of $ \bar{u} $, it is immediate that $ \bar{u} \geqslant u_{1} $. Then define
\begin{equation}\label{yamabe:eqn19}
u_{+} : = \begin{cases} \bar{u}, & \; {\rm in} \; \Omega; \\ \phi, & \; {\rm in} \; M \backslash \Omega. \end{cases}
\end{equation}
It follows that $ u_{+} \in \calC^{\infty}(M) $ since $ \bar{u} = \phi $ near $ \partial \Omega $. Since $ u_{+} = \phi $ on $ \partial M $, we conclude from (\ref{yamabe:eqn19}) that
\begin{equation}\label{yamabe:eqn20}
-a\Delta_{g} u_{+} + \left( R_{g} + \tau \right) u_{+} \geqslant \lambda_{\tau} u_{+}^{p-1} \; {\rm in} \; M, \frac{\partial u_{+}}{\partial \nu} + \frac{2}{p-2} h_{g} u_{+} \geqslant \frac{2}{p-2} \zeta u_{+}^{\frac{p}{2}} \; {\rm on} \; \partial M.
\end{equation}
Furthermore $ 0 \leqslant u_{-} \leqslant u_{+} $ and $ u_{-} \not\equiv 0 $. Thus by Theorem \ref{iteration:cor1}, we conclude that there exists a real, positive function $ u \in \calC^{\infty}(M) \cap \calC^{1, \alpha}(M) $ that solves (\ref{yamabe:eqn1}).
\end{proof}
\begin{remark}\label{yamabe:re1}
Note that if (\ref{yamabe:eqns5}) and (\ref{yamabe:eqns6}) hold for some $ \tau_{0} $, then both hold for all $ \tau \in [\tau_{0}, 0] $.
\end{remark}
\medskip

As positive eigenvalue cases in \cite{XU3, XU4}, we take the limit $ \tau \rightarrow 0^{-} $. Compared with minimal boundary case in \cite{XU4}, the essential difficulty here is the nonlinear term $ u^{\frac{p}{2}} $ on $ \partial M $. Thus we also need to show the uniform boundedness of $ u_{\tau} $ in (\ref{yamabe:eqns1}) provided the existence in Theorem \ref{yamabe:thm3}. Pick some $ \tau_{0} < 0 $, $ \lvert \tau_{0} \rvert $ small enough, such that Theorem \ref{yamabe:thm3} holds. For all $ \tau \in [\tau_{0}, 0) $, we have
\begin{equation}\label{yamabe:eqn21}
-a\Delta_{g} u_{\tau} + \left(R_{g} + \tau \right) u_{\tau} = \lambda_{\tau} u_{\tau}^{p - 1} \; {\rm in} \; M, \frac{\partial u_{\tau}}{\partial \nu} + \frac{2}{p - 2} h_{g} u_{\tau} = \frac{2}{p-2} \zeta u_{\tau}^{\frac{p}{2}} \; {\rm on} \; \partial M
\end{equation}
for a fixed $ \zeta > 0 $, which depends on the choice of $ \beta $ in (\ref{yamabe:eqns3}) only. Due to the construction of sub-solutions and super-solutions of (\ref{yamabe:eqn21}) in Theorem \ref{yamabe:thm3}, each $ u_{\tau} $ is associated with $ 0 \leqslant u_{\tau, -} \leqslant u_{\tau} \leqslant u_{\tau, +} $ where
\begin{equation}\label{yamabe:eqn22}
u_{\tau, -}  = \begin{cases} \tilde{u}_{\tau}, & \text{within} \; \Omega \\ 0, & \text{outside} \; \Omega \end{cases};  u_{\tau, +} = \begin{cases}  \chi_{\tau ,1} \tilde{u}_{\tau} + \chi_{\tau, 2} \phi + \chi_{\tau, 3} \left( \phi + \gamma \right), & \; {\rm in} \; \Omega \\ \phi, & \; {\rm in} \; M \backslash \Omega \end{cases}.
\end{equation}
Here $ \tilde{u}_{\tau} $ are local solutions of the PDEs
\begin{equation}\label{yamabe:eqn23}
-a\Delta_{g} \tilde{u}_{\tau} + \left(R_{g} + \tau \right) \tilde{u}_{\tau} = \lambda_{\tau} \tilde{u}_{\tau}^{p - 1} \; {\rm in} \; \Omega, \tilde{u}_{\tau} = 0 \; {\rm on} \; \partial \Omega;
\end{equation}
in addition, $ 0 \leqslant \chi_{\tau, i} \leqslant 1, i = 1, 2, 3 $. Next proposition shows that $ \lbrace \tilde{u}_{\tau} \rbrace $ is uniformly bounded in $ \calC^{2, \alpha} $-sense, provided that $ \Omega $ is small enough due to Remark 2.1.
\begin{proposition}\label{yamabe:prop1}
Let $ \tau_{0} $ be a negative constant such that $ \lambda{\tau_{0}} > 0 $. Let $ (\Omega, g) $ be a small enough interior Riemannian domain of $ (M, \partial M, g) $ in which (\ref{yamabe:eqn23}) admits a positive solution by Proposition \ref{pre:prop4} for every $ \tau \in [\tau_{0}, 0) $. Then there exists a constant $ \mathcal{K} $ such that
\begin{equation}\label{yamabe:eqn24}
\lVert \tilde{u}_{\tau} \rVert_{\calC^{2, \alpha}(\Omega)} \leqslant \mathcal{K}, \forall \tau \in [\tau_{0}, 0).
\end{equation}
\end{proposition}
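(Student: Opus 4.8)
The strategy is to first extract a uniform $\calL^{\infty}$ bound on the family $\lbrace \tilde{u}_{\tau} \rbrace_{\tau \in [\tau_{0}, 0)}$ and then upgrade it to a uniform $\calC^{2, \alpha}$ bound by a routine elliptic bootstrap; the only genuinely delicate point is the $\calL^{\infty}$ bound, and it is there that the smallness of $\Omega$ enters essentially.

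Two preliminary uniformities should be recorded. First, the map $\tau \mapsto \lambda_{\tau}$ defined by (\ref{yamabe:eqns2}) is non-decreasing, since for each fixed admissible $u$ the quotient in (\ref{yamabe:eqns2}) is affine and increasing in $\tau$; combined with the hypothesis $\lambda_{\tau_{0}} > 0$ and the trivial bound $\lambda_{\tau} \leqslant \lambda(M)$ valid for $\tau < 0$, this gives $0 < \lambda_{\tau_{0}} \leqslant \lambda_{\tau} < \lambda(M)$ for all $\tau \in [\tau_{0}, 0)$, so $\lambda_{\tau}$ stays in a fixed compact subinterval of $(0, \infty)$. Second, by Remark \ref{pre:re00} the smallness of $(\Omega, g)$ needed to invoke Proposition \ref{pre:prop4} is governed by the inequalities (\ref{pre:eqn18}), which depend on $\tau$ only through $\lvert \tau \rvert$ and are monotone in $\lvert \tau \rvert$; since $\lvert \tau \rvert \leqslant \lvert \tau_{0} \rvert$ throughout, they hold simultaneously for every $\tau \in [\tau_{0}, 0)$ once they hold for $\tau_{0}$. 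Hence the construction producing $\tilde{u}_{\tau}$ never degenerates as $\tau \to 0^{-}$.

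The main step is the claim that there is a constant $C_{0}$, independent of $\tau$, with $\lVert \tilde{u}_{\tau} \rVert_{\calL^{\infty}(\Omega)} \leqslant C_{0}$ for all $\tau \in [\tau_{0}, 0)$. I would deduce this from the construction underlying Proposition \ref{pre:prop4} in \cite{XU3}: the positive solution of (\ref{yamabe:eqn23}) is produced there together with a priori bounds whose constants depend only on $\sup_{\Omega} \lvert R_{g} \rvert$, the first Dirichlet eigenvalue $\lambda_{1}(\Omega)$, $\lvert \tau \rvert$ and $\lambda_{\tau}$ — for instance $\tilde u_{\tau}$ is squeezed between a fixed sub-solution and a super-solution of $\calL^{\infty}$-size controlled by exactly those quantities. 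By the previous paragraph each of these is bounded above (and $\lambda_{\tau}$ bounded below) uniformly in $\tau \in [\tau_{0}, 0)$, so a single super-solution serves the whole family and yields $0 \leqslant \tilde{u}_{\tau} \leqslant C_{0}$ pointwise on $\Omega$. I expect this to be the principal obstacle: at the critical exponent $p - 1 = \frac{n+2}{n-2}$, a priori $\calL^{\infty}$ control of Yamabe-type solutions is subtle in general (possible concentration), and what rescues us is precisely that $\Omega$ may be taken small uniformly in $\tau$ via (\ref{pre:eqn18}).

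Given the $\calL^{\infty}$ bound the rest is routine. Writing $F_{\tau} := \lambda_{\tau} \tilde{u}_{\tau}^{p-1} - (R_{g} + \tau) \tilde{u}_{\tau}$, the bound $0 \leqslant \tilde{u}_{\tau} \leqslant C_{0}$ together with the uniform control of $\lambda_{\tau}$, $R_{g}$ and $\tau$ shows that $\lVert F_{\tau} \rVert_{\calL^{q}(\Omega)}$ is bounded uniformly in $\tau$ for every $q \in (1, \infty)$. Applying interior-and-boundary $\calL^{q}$ elliptic estimates for the Dirichlet problem $-a\Delta_{g} \tilde{u}_{\tau} = F_{\tau}$ in $\Omega$, $\tilde{u}_{\tau} = 0$ on $\partial \Omega$ — the boundary being $\calC^{\infty}$ — (see e.g.\ \cite{GT}) gives a uniform bound on $\lVert \tilde{u}_{\tau} \rVert_{W^{2, q}(\Omega)}$. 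Choosing $q > \dim M$ and applying the Sobolev embedding of Proposition \ref{pre:prop2} yields a uniform bound on $\lVert \tilde{u}_{\tau} \rVert_{\calC^{1, \alpha}(\Omega)}$ with $\alpha = 1 - \frac{n}{q}$. Since $p - 1 > 1$ for every $n \geqslant 3$, the map $t \mapsto t^{p-1}$ is $\calC^{1}$, hence Lipschitz, on the compact range $[0, C_{0}]$, so $\tilde{u}_{\tau}^{p-1}$ is bounded in $\calC^{0, \alpha}(\Omega)$ uniformly; consequently $F_{\tau}$ is bounded in $\calC^{0, \alpha}(\Omega)$ uniformly, and Schauder estimates for $-a\Delta_{g} \tilde{u}_{\tau} = F_{\tau}$ with zero Dirichlet data produce a constant $\mathcal{K}$, independent of $\tau$, with $\lVert \tilde{u}_{\tau} \rVert_{\calC^{2, \alpha}(\Omega)} \leqslant \mathcal{K}$ for all $\tau \in [\tau_{0}, 0)$, which is (\ref{yamabe:eqn24}). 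A further bootstrap, if desired, upgrades this to uniform $\calC^{k, \alpha}$ bounds for every $k$.
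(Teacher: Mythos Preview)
Your outline is clean once the uniform $\calL^{\infty}$ bound is in hand, but the way you propose to obtain that bound does not match the actual construction behind Proposition~\ref{pre:prop4}, and this is a genuine gap. The local solution $\tilde{u}_{\tau}$ of (\ref{yamabe:eqn23}) is \emph{not} produced by a sub/super-solution squeeze; it is a variational (mountain-pass type) solution, and what the construction hands you is an energy bound $J(\tilde{u}_{\tau}) \leqslant K_{0}$ (via \cite{WANG}), not a pointwise bound. Indeed, on $\Omega$ one has $R_{g} + \tau < 0$, so a constant $C$ satisfies $-a\Delta_{g} C + (R_{g}+\tau)C < 0 < \lambda_{\tau} C^{p-1}$ and is never a super-solution; there is no obvious barrier giving $\tilde{u}_{\tau} \leqslant C_{0}$ directly. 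At the critical exponent this is exactly where concentration could occur, and the smallness conditions (\ref{pre:eqn18}) by themselves do not rule it out.

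What the paper does instead is the substantive step you are missing. From the energy bound $J(\tilde{u}_{\tau}) \leqslant K_{0}$ and the pairing of (\ref{yamabe:eqn23}) with $\tilde{u}_{\tau}$, one first extracts a uniform $\calL^{p}(\Omega)$ bound on $\tilde{u}_{\tau}$ (this is where the second inequality in (\ref{pre:eqn18}) is used). After a one-time scaling one may take $\lVert \tilde{u}_{\tau} \rVert_{\calL^{p}(\Omega,g)} \leqslant 1$. The crucial quantitative input is then $\lambda_{\tau} \leqslant \lambda(\mathbb{S}_{+}^{n}) = 2^{-2/n} aT < aT$, with $T$ the sharp Euclidean Sobolev constant; note that your cruder bound $\lambda_{\tau} \leqslant \lambda(M)$ would not suffice here. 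Pairing (\ref{yamabe:eqn23}) with $\tilde{u}_{\tau}^{1+2\delta}$ (the Trudinger--Aubin trick) and using the sharp Sobolev inequality on the small domain gives, for $r$ and $\delta$ small,
\[
\lVert \tilde{u}_{\tau}^{\,1+\delta} \rVert_{\calL^{p}(\Omega,g)}^{2} \leqslant (1+Ar^{2})^{2}\,\frac{1+\delta^{2}}{1+2\delta}\,\frac{\lambda_{\tau}}{aT}\,\lVert \tilde{u}_{\tau}^{\,1+\delta} \rVert_{\calL^{p}(\Omega,g)}^{2}\,\lVert \tilde{u}_{\tau} \rVert_{\calL^{p}(\Omega,g)}^{p-2} + C\,\lVert \tilde{u}_{\tau}^{\,1+\delta} \rVert_{\calL^{2}(\Omega,g)}^{2},
\]
and since the coefficient in front of the first term on the right is strictly less than $1$, this yields a uniform $\calL^{p(1+\delta)}$ bound with $p(1+\delta) > p$. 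Only after this supercritical integrability is established does the routine bootstrap you describe go through to (\ref{yamabe:eqn24}). In short: replace your assumed barrier with the energy bound $\Rightarrow$ uniform $\calL^{p}$ $\Rightarrow$ Trudinger--Aubin step $\Rightarrow$ uniform $\calL^{r}$ for some $r>p$; the rest of your argument is fine.
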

\begin{proof} Due to the hypothesis of $ \tau_{0} $, we have $ \lambda_{\tau_{0}} > 0 $. As what we did in Theorem 4.3 of \cite{XU3} and Theorem 5.6 of \cite{XU4}, we show that $ \lambda_{\tau} \in [\lambda_{\tau_{0}}, \lambda(\mathbb{S}_{+}^{n})] $ and
\begin{equation}\label{yamabe:eqn25}
\lVert \tilde{u}_{\tau} \rVert_{\calL^{p}(\Omega, g)} \leqslant C_{1}', \forall \tau \in [\tau_{0}, 0).
\end{equation}
We may assume $ \int_{M} d\omega = 1 $ for this continuity verification, since otherwise only an extra term with respect to $ \text{Vol}_{g} $ will appear. Recall from (\ref{yamabe:eqns2}) that
\begin{equation*}
\lambda_{\tau} = \inf_{u \neq 0, u \in H^{1}(M)} \left\lbrace \frac{\int_{M} a\lvert \nabla_{g} u \rvert^{2} d\omega + \int_{M} \left( R_{g} + \tau \right) u^{2} d\omega + \int_{\partial M} \frac{2a}{p-2}h_{g} u^{2} dS}{\left( \int_{M} u^{p} d\omega \right)^{\frac{2}{p}}} \right\rbrace.
\end{equation*}
It is immediate that if $ \tau_{1} < \tau_{2} < 0 $ then $ \lambda_{\tau_{1}} \leqslant \lambda_{\tau_{2}} $. For continuity we assume $ 0 < \tau_{2} - \tau_{1} < \gamma $. For each $ \epsilon > 0 $, there exists a function $ u_{0} $ such that
\begin{equation*}
\frac{\int_{M} a\lvert \nabla_{g} u_{0} \rvert^{2} d\omega + \int_{M} \left( R_{g} + \tau_{1} \right) u_{0}^{2} d\omega + \int_{\partial M} \frac{2a}{p-2}h_{g} u_{0}^{2} dS}{\left( \int_{M} u_{0}^{p} d\omega \right)^{\frac{2}{p}}} < \lambda_{\tau_{1}} + \epsilon.
\end{equation*}
It follows that
\begin{align*}
\lambda_{\tau_{2}} & \leqslant \frac{\int_{M} a\lvert \nabla_{g} u_{0} \rvert^{2} d\omega + \int_{M} \left( R_{g} + \tau_{2} \right) u_{0}^{2} d\omega + \int_{\partial M} \frac{2a}{p-2}h_{g} u_{0}^{2} dS}{\left( \int_{M} u_{0}^{p} d\omega \right)^{\frac{2}{p}}} \\
& \leqslant \frac{\int_{M} a\lvert \nabla_{g} u_{0} \rvert^{2} d\omega + \int_{M} \left( R_{g} + \tau_{1} \right) u_{0}^{2} d\omega + \int_{\partial M} \frac{2a}{p-2}h_{g} u_{0}^{2} dS}{\left( \int_{M} u_{0}^{p} d\omega \right)^{\frac{2}{p}}} + \frac{\left( \tau_{2} - \tau_{1} \right) \int_{M} u_{0}^{2} d\omega}{\left( \int_{M} u_{0}^{p} d\omega \right)^{\frac{2}{p}}} \\
& \leqslant \lambda_{\tau_{1}} + \epsilon + \tau_{2} - \tau_{1} < \lambda_{\beta_{1}} + \epsilon + \tau_{2} - \tau_{1}.
\end{align*}
Since $ \epsilon $ is arbitrarily small, we conclude that
\begin{equation*}
0 < \tau_{2} - \tau_{1} < \gamma \Rightarrow \lvert \lambda_{\tau_{2}} - \lambda_{\tau_{1}} \rvert \leqslant 2\gamma.
\end{equation*}
By equation (4) in \cite[\S1]{ESC} , we conclude that
\begin{equation*}
\lambda_{\beta} \leqslant \lambda(\mathbb{S}_{+}^{n})
\end{equation*}
Thus we conclude that
\begin{equation}\label{yamabe:eqn26}
\lambda_{\tau_{0}} \leqslant \lambda_{\tau} \leqslant \lambda(\mathbb{S}_{+}^{n}), \forall \tau \in [\tau_{0}, 0), \lim_{\tau \rightarrow 0^{-}} \lambda_{\tau} : = \lambda.
\end{equation}
Next we show that (\ref{yamabe:eqn25}) holds for all $ \tau \in [\tau_{0}, 0] $. Pairing $ \tilde{u}_{\beta} $ on both sides of (\ref{yamabe:eqn23}),
\begin{equation}\label{yamabe:eqn27}
\begin{split}
a\lVert \nabla_{g} \tilde{u}_{\tau} \rVert_{\calL^{2}(\Omega, g)}^{2} & =  \lambda_{\tau} \lVert \tilde{u}_{\tau} \rVert_{\calL^{p}(\Omega, g)}^{p} - \int_{M} \left( R_{g} + \tau \right) u_{\tau}^{2} \dvol \\
\Rightarrow \lambda_{\tau} \lVert \tilde{u}_{\tau} \rVert_{\calL^{p}(\Omega, g)}^{p} & \leqslant a\lVert \nabla_{g} \tilde{u}_{\tau} \rVert_{\calL^{2}(\Omega, g)}^{2} + \left( \sup_{M} \lvert R_{g} \rvert + \lvert \tau \rvert \right) \lVert u_{\tau} \rVert_{\calL^{2}(\Omega, g)}^{2}.
\end{split}
\end{equation}
Recall the functional
\begin{equation*}
J(u) = \int_{\Omega} \left( \frac{1}{2} \sum_{i, j} a_{ij}(x) \partial_{i}u \partial_{j} u - \frac{\lambda_{\tau} \sqrt{\det(g)} }{p} u^{p} - \frac{1}{2} \left(R_{g} + \tau \right) u^{2} \sqrt{\det(g)} \right) d
\end{equation*}
and the constant $ K_{0} $ in \cite[\S3]{XU3}. Note that $ K_{0} $ only depends on $ \lambda_{\tau} $, hence $ K_{0} $ is uniformly bounded above when $ \tau \in [\tau_{0}, 0) $. We denote this upper bound by $ K_{0} $ again. Due to Theorem 1.1 of \cite{WANG}, each solution $ \tilde{u}_{\tau} $ satisfies
\begin{equation}\label{yamabe:eqn28}
J(\tilde{u}_{\tau}) \leqslant K_{0} \Rightarrow \frac{a}{2} \lVert \nabla_{g} \tilde{u}_{\tau} \rVert_{\calL^{2}(\Omega, g)}^{2} - \frac{\lambda_{\tau}}{p} \lVert \tilde{u}_{\tau} \rVert_{\calL^{p}(\Omega, g)}^{p} - \frac{1}{2} \int_{M} \left(R_{g} + \tau \right) \tilde{u}_{\tau}^{2} \dvol \leqslant K_{0}.
\end{equation}
Let $ \lambda_{1} $ be the first eigenvalue of $ -\Delta_{g} $ on $ \Omega $ with Dirichlet boundary condition. Apply the estimate (\ref{yamabe:eqn27}) into (\ref{yamabe:eqn28}), we have
\begin{align*}
\frac{a}{2} \lVert \nabla_{g} \tilde{u}_{\tau} \rVert_{\calL^{2}(\Omega, g)}^{2} & \leqslant K_{0} + \frac{1}{p} \left(a\lVert \nabla_{g} \tilde{u}_{\tau} \rVert_{\calL^{2}(\Omega, g)}^{2} + \left( \sup_{M} \lvert R_{g} \rvert + \lvert \tau \rvert \right) \lVert u_{\tau} \rVert_{\calL^{2}(\Omega, g)}^{2} \right) \\
& \qquad + \frac{1}{2} \left( \sup_{M} \lvert R_{g} \rvert + \lvert \tau \rvert \right) \lVert u_{\tau} \rVert_{\calL^{2}(\Omega, g)}^{2} \\
& \leqslant K_{0} + \frac{a(n - 2)}{2n} \lVert \nabla_{g} \tilde{u}_{\tau} \rVert_{\calL^{2}(\Omega, g)}^{2} \\
& \qquad + \left( \frac{n - 2}{2n} + \frac{1}{2} \right) \left( \sup_{M} \lvert R_{g} \rvert + \lvert \tau \rvert \right) \cdot \lambda_{1}^{-1} \lVert \nabla_{g} \tilde{u}_{\tau} \rVert_{\calL^{2}(\Omega, g)}^{2}; \\
\Rightarrow & \left(\frac{a}{n} - \left( \frac{n - 2}{2n} + \frac{1}{2} \right) \left( \sup_{M} \lvert R_{g} \rvert + \lvert \tau \rvert \right) \cdot \lambda_{1}^{-1} \right) \lVert \nabla_{g} \tilde{u}_{\tau} \rVert_{\calL^{2}(\Omega, g)}^{2} \leqslant K_{0}.
\end{align*}
Recall in Remark 2.1 that $ \Omega $ is small enough so that 
\begin{equation*}
\frac{a}{n} - \left( \frac{n - 2}{2n} + \frac{1}{2} \right) \left( \sup_{M} \lvert R_{g} \rvert + \lvert \tau_{0} \rvert \right) \cdot \lambda_{1}^{-1} > 0
\end{equation*}
holds for $ \tau_{0} $, thus for all $ \tau_{0} \in [\tau_{0}, 0] $. It follows from above that there exists a constant $ C_{0}' $ such that
\begin{equation*}
\lVert \nabla_{g} \tilde{u}_{\tau} \rVert_{\calL^{2}(\Omega, g)}^{2} \leqslant C_{0}', \forall \tau \in [\tau_{0}, 0].
\end{equation*}
Apply (\ref{yamabe:eqn27}) with the other way around, we conclude that
\begin{align*}
\lambda_{\tau} \lVert \tilde{u}_{\tau} \rVert_{\calL^{p}(\Omega, g)}^{p} & \leqslant a\lVert \nabla_{g} \tilde{u}_{\tau} \rVert_{\calL^{2}(\Omega, g)}^{2} + \left( \sup_{M} \lvert R_{g} \rvert + \lvert \tau \rvert \right)  \lVert \tilde{u}_{\tau} \rVert_{\calL^{2}(\Omega, g)}^{2} \\
& \leqslant \left( a + \left( \sup_{M} \lvert R_{g} \rvert + \lvert \tau \rvert \right)  \lambda_{1}^{-1} \right) \lVert \nabla_{g} u_{\tau} \rVert_{\calL^{2}(\Omega, g)}^{2}.
\end{align*}
Furthermore, it follows from the characterization of $ \lambda_{\tau} $ and observe that the small domain $ \Omega $ we chose is an interior domain hence $ u_{\tau} \equiv 0 $ on $ \partial M $. Applying (\ref{yamabe:eqn27}), we have
\begin{equation*}
\lambda_{\tau} \leqslant \frac{\int_{\Omega} a \lvert \nabla_{g} \tilde{u}_{\tau} \rvert^{2} \dvol + \int_{\Omega} \left( R_{g} + \tau \right) \tilde{u}_{\tau}^{2} \dvol}{\left( \int_{\Omega} \tilde{u}_{\tau}^{p} d\omega \right)^{\frac{2}{p}}} = \frac{\lambda_{\tau} \lVert \tilde{u}_{\tau} \rVert_{\calL^{p}(\Omega, g)}^{p}}{\lVert \tilde{u}_{\tau} \rVert_{\calL^{p}(\Omega, g)}^{\frac{2}{p}}}
\end{equation*}
We conclude that
\begin{equation*}
1 \leqslant \lVert \tilde{u}_{\tau} \rVert_{\calL^{p}(\Omega, g)} \leqslant C_{1}', \forall \tau \in [\tau_{0}, 0).
\end{equation*}
Note that this uniform upper bound $ C_{1}' $ is unchanged if we further shrink the domain $ \Omega $. Note that this shrinkage of domain is a restriction, not a scaling of domain or metric. We can, without loss of generality, assume that $ C_{1}' = 1 $. This can be done by scaling the metric one time, uniformly for all $ \tau \in [\tau_{0}, 0) $. Note that this scaling does not affect the local solvability in Proposition \ref{pre:prop4} since the estimates in Appendix A of \cite{XU3} still hold under scaling. After one-time scaling $ g \mapsto \delta g $ we still have $ \lambda_{\delta^{2}\tau} \in [\lambda_{\delta^{2}\tau_{0}}, \lambda(\mathbb{S}_{+}^{n})] $ due to the characterization of $ \lambda_{\tau} $ and scaling invariance of $ \lambda(\mathbb{S}_{+}^{n}) $. Since $ \tau < 0 $, the lower bound of $ \lambda_{\delta^{2}\tau_{0}} $ is unchanged if $ \delta < 1 $. In addition, we also have the scaling invariance of the sharp Sobolev inequality with critical exponent that will be used below. We still denote the new metric by $ g $, which follows that
\begin{equation}\label{yamabe:eqn29}
\mathcal{K}_{3}' \leqslant \lVert \tilde{u}_{\tau} \rVert_{\calL^{p}(\Omega, g)}^{p} \leqslant 1, \forall \tau \in [\tau_{0}, 0).
\end{equation}
According to equation (4) of \cite[\S1]{ESC}, we have
\begin{equation}\label{yamabe:eqn30}
\lambda_{\tau} \leqslant \lambda(\mathbb{S}_{+}^{n}) = \frac{n(n - 2)}{4} \text{Vol}\left(\mathbb{S}_{+}^{n}\right)^{\frac{2}{n}} = 2^{-\frac{2}{n}} \frac{n(n - 2)}{4} \text{Vol}\left(\mathbb{S}^{n}\right)^{\frac{2}{n}} = 2^{-\frac{2}{n}} \lambda(\mathbb{S}^{n}) = 2^{-\frac{2}{n}} aT.
\end{equation}
It follows from (\ref{yamabe:eqn30}) that the ratio $ \frac{\lambda_{\tau}}{aT} < 1 $ holds after one time scaling in the sense of (\ref{yamabe:eqn29}). Due to the idea of Trudinger and Aubin, the argument in Theorem 4.4 of \cite{XU3} and Theorem 5.6 of \cite{XU4}, we pair $ \tilde{u}_{\tau}^{1 + 2 \delta} $ for some $ \delta > 0 $ on both sides of (\ref{yamabe:eqn23}) and denote $ w_{\tau} = \tilde{u}_{\tau}^{1 + \delta} $, we have
\begin{align*}
& \int_{\Omega} a \nabla_{g} \tilde{u}_{\tau} \cdot \nabla_{g} \left(\tilde{u}_{\tau}^{1 + 2\delta} \right) \dvol + \int_{\Omega} \left(R_{g} + \tau \right) u_{\beta}^{2 + 2\delta} \dvol = \lambda_{\tau} \int_{\Omega} \tilde{u}_{\tau}^{p + 2\delta} \dvol; \\
\Rightarrow & \frac{1 + 2\delta}{1 + \delta^{2}} \int_{\Omega} a \lvert \nabla_{g} w_{\tau} \rvert^{2} \dvol = \lambda_{\tau} \int_{\Omega} w_{\tau}^{2} \tilde{u}_{\tau}^{p-2} \dvol - \int_{\Omega} \left(R_{g} + \tau \right) w_{\tau}^{2} \dvol.
\end{align*}
When the radius $ r $ of $ \Omega $ is small enough, there exists a constant $ A $ such that
\begin{equation*}
\lVert u \rVert_{\calL^{p}(\Omega, g)}^{2} \leqslant (1 + Ar^{2}) \lVert u \rVert_{\calL^{p}(\Omega)}^{2}, \lVert D u \rVert_{\calL^{2}(\Omega)}^{2} \leqslant (1 + Ar^{2}) \lVert \nabla_{g} u \rVert_{\calL^{2}(\Omega, g)}^{2}.
\end{equation*}
Due to standard sharp Sobolev embedding on Euclidean space, we have
\begin{align*}
\lVert w_{\tau} \rVert_{\calL^{p}(\Omega, g)}^{2} & \leqslant (1 + Ar^{2}) \lVert w_{\tau} \rVert_{\calL^{p}(\Omega)}^{2} \leqslant \frac{1 + Ar^{2}}{T} \lVert D w_{\tau} \rVert_{\calL^{2}(\Omega)}^{2} \leqslant \frac{\left( 1 + Ar^{2} \right)^{2}}{T} \lVert \nabla_{g} w_{\tau} \rVert_{\calL^{2}(\Omega, g)}^{2} \\
& = \frac{\left( 1 + Ar^{2} \right)^{2}}{aT} \cdot \frac{1 + \delta^{2}}{1 + 2\delta} \left(\lambda_{\tau} \int_{\Omega} w_{\tau}^{2} \tilde{u}_{\tau}^{p-2} \dvol - \int_{\Omega} \left(R_{g} + \tau \right) w_{\tau}^{2} \dvol \right) \\
& \leqslant \frac{\left( 1 + Ar^{2} \right)^{2}}{aT} \cdot \frac{1 + \delta^{2}}{1 + 2\delta} \lambda_{\tau} \lVert w_{\tau} \rVert_{\calL^{p}(\Omega, g)}^{2} \lVert \tilde{u}_{\tau} \rVert_{\calL^{p}(\Omega, g)}^{p-2} + C_{\tau} \lVert w_{\tau} \rVert_{\calL^{2}(\Omega, g)}^{2} \\
& \leqslant \left( 1 + Ar^{2} \right)^{2} \cdot \frac{1 + \delta^{2}}{1 + 2\delta} \cdot \frac{2^{-\frac{2}{n}} aT}{aT} \lVert w_{\tau} \rVert_{\calL^{p}(\Omega, g)}^{2} + C_{\tau} \lVert w_{\tau} \rVert_{\calL^{2}(\Omega, g)}^{2}
\end{align*}
by H\"older's inequality and (\ref{yamabe:eqn30}). Note that $ C_{\tau} $ is uniformly bounded above for all $ \tau \in [\tau_{0}, 0) $. Due to the last line above, we can choose $ r, \delta $ small enough so that
\begin{equation*}
\left( 1 + Ar^{2} \right)^{2} \cdot \frac{1 + \delta^{2}}{1 + 2\delta} \cdot \frac{2^{-\frac{2}{n}} aT}{aT} < 1.
\end{equation*}
It follows that
\begin{equation*}
\lVert w_{\tau} \rVert_{\calL^{p}(\Omega, g)}^{2} \leqslant \mathcal{K}_{1} \lVert w_{\tau} \rVert_{\calL^{2}(\Omega, g)}^{2}.
\end{equation*}
Recall that $ w_{\tau} = \tilde{u}_{\tau}^{1 + \delta} $. Applying H\"older's inequality on right side above, and note that $ \text{Vol}_{g}(\Omega) \leqslant \text{Vol}_{g}(M) $, we conclude by exactly the same argument as in \cite[Prop.~4.4]{PL}, \cite[Thm.~4.4]{XU3} that
\begin{equation}\label{yamabe:eqn31}
\lVert \tilde{u}_{\tau} \rVert_{\calL^{r}(\Omega, g)} \leqslant \mathcal{K}_{2}, r = p(1 + \delta), \forall \tau \in [\tau_{0}, 0).
\end{equation}
Applying a standard bootstrapping method with elliptic regularity and Sobolev embedding, it follows from (\ref{yamabe:eqn31}) that (\ref{yamabe:eqn24}) holds.
\end{proof}
\begin{remark}\label{yamabe:sre2}
The key theorem \cite[Theorem A]{PL} given by Yamabe, Trudinger and Aubin showed that the Yamabe equation has a desired solution if the Yamabe invariant of some closed manifold is strictly less than a threshold. They applied a one-time scaling of the metric in their proof, in which they normalize the volume of the manifold. Here we also apply a one-time scaling of the metric. The reason that we can apply the scaling here is due to the nonlinear term $ u^{p-1} $. We do not need to pass the limit from sub-critical exponent case to critical exponent case.
\end{remark}
\medskip

An immediate consequence of Proposition (\ref{yamabe:prop1}) is that $ \lVert u_{\tau, +} \rVert_{\calL^{r}(M, g)} $ are uniformly bounded for all $ \tau \in [\tau_{0}, 0) $. So are $ \lVert u_{\tau} \rVert_{\calL^{r}(M, g)} $ with $ r = p( 1 + \delta) > p $. Now we can show the existence of the positive solution of the boundary Yamabe equation when $ \eta_{1} > 0 $.
\begin{theorem}\label{yamabe:thm4}
Let $ (M, \partial M, g) $ be a compact manifold with non-empty smooth boundary. Assume $ R_{g} < 0 $ somewhere in $ M $ and $ h_{g} > 0 $ everywhere on $ \partial M $. When $ \eta_{1} > 0 $, there exists some $ \lambda > 0 $ and $ \zeta > 0 $ such that the boundary Yamabe equation (\ref{yamabe:eqn1}) has a real, positive solution $ u \in \calC^{\infty}(M) $.
\end{theorem}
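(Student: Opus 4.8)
\medskip

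The plan is to produce $ u $ as a limit $ \lim_{\tau \to 0^{-}} u_{\tau} $, where $ u_{\tau} $ solves the perturbed equation (\ref{yamabe:eqns1}); the substance of the argument is a family of $ \tau $-independent a priori bounds of the type (\ref{intro:eqn3}) together with a compactness step. First I would fix $ \tau_{0} < 0 $ with $ \lvert \tau_{0} \rvert $ small enough that Theorem \ref{yamabe:thm3} applies for every $ \tau \in [\tau_{0}, 0) $ and that $ \lambda_{\tau_{0}} > 0 $. This yields positive solutions $ u_{\tau} $ of (\ref{yamabe:eqn21}) with a fixed $ \zeta > 0 $ (depending only on the $ \beta $ chosen in (\ref{yamabe:eqns3})) and with barriers $ 0 \leqslant u_{\tau, -} \leqslant u_{\tau} \leqslant u_{\tau, +} $ as in (\ref{yamabe:eqn22}), and by (\ref{yamabe:eqn26}) we have $ \lambda_{\tau} \in [\lambda_{\tau_{0}}, \lambda(\mathbb{S}_{+}^{n})] $ with $ \lambda := \lim_{\tau \to 0^{-}} \lambda_{\tau} \geqslant \lambda_{\tau_{0}} > 0 $.

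Next I would establish (\ref{intro:eqn3}). For the upper bound, Proposition \ref{yamabe:prop1} gives $ \lVert \tilde{u}_{\tau} \rVert_{\calC^{2, \alpha}(\Omega)} \leqslant \mathcal{K} $ uniformly in $ \tau $; since $ 0 \leqslant \chi_{\tau, i} \leqslant 1 $ and $ \phi $ is fixed, the super-solutions obey $ \lVert u_{\tau, +} \rVert_{\calL^{r}(M, g)} \leqslant C_{2} $ for $ r = p(1 + \delta) > p $, hence $ \lVert u_{\tau} \rVert_{\calL^{r}(M, g)} \leqslant C_{2} $. For the lower bound I would use $ u_{\tau} \geqslant u_{\tau, -} $ and the estimate (\ref{yamabe:eqn29}), which gives $ \lVert u_{\tau} \rVert_{\calL^{p}(M, g)} = \lVert \tilde{u}_{\tau} \rVert_{\calL^{p}(\Omega, g)} \geqslant (\mathcal{K}_{3}')^{1/p} =: C_{1} > 0 $; here, unlike in \cite{XU4}, the lower bound genuinely passes through the local sub-solution because the nonlinear boundary term $ \frac{2}{p-2}\zeta u^{\frac{p}{2}} $ obstructs a purely global argument.

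The third step is to upgrade the uniform $ \calL^{r} $-bound to a uniform $ \calC^{2, \alpha} $-bound on $ u_{\tau} $. In (\ref{yamabe:eqn21}) the interior right-hand side $ \lambda_{\tau} u_{\tau}^{p-1} $ lies in $ \calL^{r/(p-1)}(M, g) $ with $ r/(p-1) > p/(p-1) $, so applying the global $ \calL^{p} $-regularity of Theorem \ref{pre:thm1} and the Sobolev embeddings of Proposition \ref{pre:prop2} improves the integrability; iterating finitely many times as in the bootstrap of Theorem \ref{iteration:thm1} produces a uniform $ \calC^{1, \alpha}(M) $-bound, and then Schauder estimates give a uniform $ \calC^{2, \alpha}(M) $-bound. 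The Robin datum $ \frac{2}{p-2}\zeta u_{\tau}^{\frac{p}{2}} $ must be carried along at each stage: since $ \nabla_{g}(u_{\tau}^{\frac{p}{2}}) = \frac{p}{2} u_{\tau}^{\frac{p}{2}-1}\nabla_{g} u_{\tau} $, its $ W^{1, q} $-norm is controlled once $ \sup_{M} u_{\tau} $ and $ \lVert \nabla_{g} u_{\tau} \rVert_{\calL^{q}(M, g)} $ are, so the iteration must be organized so that the gradient bound at one stage feeds the regularity estimate at the next, exactly as in Theorem \ref{iteration:thm1}. With these bounds, the Arzel\`a--Ascoli theorem yields a sequence $ \tau_{j} \to 0^{-} $ with $ u_{\tau_{j}} \to u $ in $ \calC^{2}(M) $; since $ \tau_{j} u_{\tau_{j}} \to 0 $ uniformly and $ \lambda_{\tau_{j}} \to \lambda > 0 $, the limit $ u $ solves (\ref{yamabe:eqn1}) with this $ \lambda $ and the fixed $ \zeta $.

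Finally, $ \lVert u \rVert_{\calL^{p}(M, g)} \geqslant C_{1} $ forces $ u \not\equiv 0 $, so the strong maximum principle of Theorem \ref{pre:thm3} together with the boundary-point argument of \cite[\S1]{ESC} gives $ u > 0 $ on all of $ M $, and a standard bootstrap using Schauder estimates upgrades $ u $ to $ \calC^{\infty}(M) $. I expect the main obstacle to be the third step: extracting $ \tau $-uniform higher regularity when the right-hand side carries only the critical-exponent terms $ u^{p-1} $ and $ u^{\frac{p}{2}} $. The slack $ r > p $ furnished by Proposition \ref{yamabe:prop1} is precisely what makes the bootstrap self-improving rather than merely critical, while the boundary nonlinearity $ u^{\frac{p}{2}} $ is the genuinely new difficulty compared with the minimal boundary case, since controlling its $ W^{1, q} $-norm requires feeding the gradient estimate back into the elliptic regularity inequality, which must be arranged to avoid circularity.
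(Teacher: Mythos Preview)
Your proposal is correct and follows essentially the same route as the paper's own proof: construct $u_{\tau}$ via Theorem \ref{yamabe:thm3}, pull the uniform $\calL^{r}$ upper bound from the super-solutions and Proposition \ref{yamabe:prop1}, pull the uniform $\calL^{p}$ lower bound from the local sub-solutions $\tilde{u}_{\tau}$, bootstrap to a uniform $\calC^{2,\alpha}$ bound, and pass to the limit by Arzel\`a--Ascoli, with positivity via the maximum principle and \cite[\S1]{ESC}. Two small remarks: in your lower-bound step the equality $\lVert u_{\tau}\rVert_{\calL^{p}(M,g)} = \lVert \tilde{u}_{\tau}\rVert_{\calL^{p}(\Omega,g)}$ should be an inequality $\geqslant$ (since $u_{\tau}\geqslant u_{\tau,-}$, not $=$); and the paper simply asserts the bootstrap step (``by repeated elliptic regularities and Sobolev embedding'') without the careful tracking of the boundary term $u_{\tau}^{p/2}$ that you flag --- your more explicit organization of that iteration is a genuine improvement in clarity over the paper's version.
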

\begin{proof} Pick up some $ \tau_{0} < 0 $ and $ \lvert \tau_{0} \rvert $ small enough. The following PDE
\begin{equation*}
-a\Delta_{g} u_{\tau_{0}} + \left(R_{g} + \tau_{0} \right) u_{\tau_{0}} = \lambda_{\tau_{0}} u_{\tau_{0}}^{p - 1} \; {\rm in} \; M, \frac{\partial u_{\tau_{0}}}{\partial \nu} + \frac{2}{p - 2} h_{g} u_{\tau_{0}} = \frac{2}{p-2} \zeta u_{\tau_{0}}^{\frac{p}{2}} \; {\rm on} \; \partial M
\end{equation*}
admits a real, positive solution $ u_{\tau_{0}} \in \calC^{\infty}(M) $. Due to the discussion above, there exists a sequence of smooth functions $ \lbrace u_{\tau} \rbrace, \tau \in [\tau_{0}. 0) $, each $ u_{\tau} $ is a real, positive solution of 
\begin{equation}\label{yamabe:eqn32a}
-a\Delta_{g} u_{\tau} + \left(R_{g} + \tau \right) u_{\tau} = \lambda_{\tau} u_{\tau}^{p - 1} \; {\rm in} \; M, \frac{\partial u_{\tau}}{\partial \nu} + \frac{2}{p - 2} h_{g} u_{\tau} = \frac{2}{p-2} \zeta u_{\tau}^{\frac{p}{2}} \; {\rm on} \; \partial M,
\end{equation}
respectively. Recall that when we construct super-solutions in Theorem \ref{yamabe:thm3}, we determine the eigenfunction $ \phi $ by letting
\begin{equation*}
\left( \eta_{1, \beta} + \tau \right) \inf_{M} (\delta \varphi) > 2^{p-2} \lambda_{\tau} \sup_{M} \left(\delta^{p-1} \varphi^{p-1} \right)
\end{equation*}
Since both $ \tau $ and $ \lambda_{\tau} $ are bounded above and below, a uniform choice of $ \delta $ is good enough for all $ \tau \in [\tau_{0}, 0) $. Due to (\ref{yamabe:eqn22}) and Proposition \ref{yamabe:prop1}, we conclude that
\begin{equation}\label{yamabe:eqn32}
\lVert u_{\tau} \rVert_{\calL^{r}(M, g)} \leqslant \mathcal{K}_{0}', \forall \tau \in [\tau_{0}, 0), r > p.
\end{equation}
By repeated elliptic regularities and Sobolev embedding, (\ref{yamabe:eqn32}) implies that
\begin{equation}\label{yamabe:eqn33}
\lVert u_{\tau} \rVert_{\calC^{2, \alpha}(M)} \leqslant \mathcal{K}_{0}, \forall \tau \in [\tau_{0}, 0).
\end{equation}
By Arzela-Ascoli, it follows that up to a subsequence, $ \lim_{\tau \rightarrow 0^{-}} u_{\tau} = u $. Due to (\ref{yamabe:eqn26}), we have $ \lim_{\tau \rightarrow 0^{-}} \lambda_{\tau} = \lambda $. It follows that the limiting function $ u $ satisfies
\begin{align*}
-a\Delta_{g} u + R_{g} u = \lambda u^{p-1} \; {\rm in} \; M; \\
\frac{\partial u}{\partial \nu} + \frac{2}{p-2} h_{g} u = \frac{2}{p-2} \zeta u^{\frac{p}{2}} \; {\rm on} \; \partial M.
\end{align*}
By \cite{Che} we conclude that $ u \in \calC^{\infty}(M) $. Lastly we show $ u > 0 $. Due to the construction of $ u_{\tau} $, it suffices to show $ \tilde{u}_{\tau} $ in (\ref{yamabe:eqn25}) are uniformly bounded below. Due to (\ref{yamabe:eqn29}) in Proposition \ref{yamabe:prop1}, we conclude that $ \lVert \tilde{u}_{\tau} \rVert_{\calL^{p}(\Omega, g)} \geqslant \mathcal{K}_{3}', \forall \tau \in [\tau_{0}, 0) $. Since $ u_{\tau} \geqslant \tilde{u}_{\tau} \geqslant 0 $ pointwise, it follows that
\begin{equation*}
\lVert u_{\tau} \rVert_{\calL^{p}(M, g)} \geqslant \mathcal{K}_{3} > 0, \forall \tau \in [\tau_{0}, 0).
\end{equation*}
As a consequence of Arzela-Ascoli theorem, it follows that
\begin{equation*}
\lVert u \rVert_{\calL^{p}(M, g)} > 0.
\end{equation*}
It then follows from maximum principle and \cite[\S1]{ESC} that $ u > 0 $ on $ M $.
\end{proof}
\medskip

\begin{remark}\label{yamabe:re2}
The key steps for positive eigenvalue case are: (i) a local solution of Yamabe equation with Dirichlet condition, which plays a central role as a nontrivial sub-solution; the existence of such a solution is proven in Appendix A of \cite{XU3}; (ii) a particular choices of partition of unity in constructing a super-solution of Yamabe equation in $ \Omega $, this specific construction relies on the solutions of linear PDEs, the details can be found in the proof of Theorem 4.3 of \cite{XU3}.

We point out again that the existence of a local solution of Yamabe equation and the construction of a local super-solution can help us avoid the use of Weyl tensor. We do not need to classify the boundary points also, since the local solution is within a subset of interior of the manifold, and the sub-solution we constructed is trivial on $ \partial M $. See also Remark \ref{pre:re00}
\end{remark}

\bibliographystyle{plain}
\bibliography{Yamabessg}

\end{document}